\theoremstyle{plain}
    \newtheorem{theorem}{Theorem}[section]
    \newtheorem{lemma}[theorem]{Lemma}
    \newtheorem{proposition}[theorem]{Proposition}
\theoremstyle{definition}
    \newtheorem{remark}{Remark}[section]
\theoremstyle{remark}
\numberwithin{equation}{section}
\renewcommand{\l}{\left}
\renewcommand{\r}{\right}
\newcommand{\cleq}{\lesssim}
\newcommand{\ceq}{\approx} 
\def\norm#1{\left\Vert #1 \right\Vert} 
\def\jbra#1{\left\langle #1 \right\rangle} 
\newcommand{\R}{\mathbb{R}}
\newcommand{\Z}{\mathbb{Z}}
\newcommand{\cD}{\mathcal{D}}
\newcommand{\cF}{\mathcal{F}}
\let\div\relax
\DeclareMathOperator{\div}{div}
\begin{document}

\title[Non-delay limit from NLDW to NLH]{Non-delay limit in the energy space from the nonlinear damped wave equation to the nonlinear heat equation}

\author[T. Inui]{Takahisa Inui}
\address{Department of Mathematics, Graduate School of Science, Osaka University, Toyonaka, Osaka 560-0043, Japan}
\email{inui@math.sci.osaka-u.ac.jp}

\author[S. Machihara]{Shuji Machihara}
\address{Department of Mathematics, Faculty of Science, Saitama University, 255 Shimo-Okubo, Sakura-ku, Saitama City 338-8570, Japan}
\email{machihara@rimath.saitama-u.ac.jp}
\date{}
\keywords{nonlinear damped wave equation, singular limit}
\subjclass[2020]{35L71, 35K58, 35A35 etc.}
\maketitle

\begin{abstract}
We consider a singular limit problem from the damped wave equation with a power type nonlinearity to the corresponding heat equation. 
We call our singular limit problem  non-delay limit. 
Our proofs are based on the argument for non-relativistic limit from the nonlinear Klein-Gordon equation to the nonlinear Schr\"{o}dinger equation by the second author, Nakanishi, and Ozawa \cite{MNO02}, Nakanishi \cite{Nak02}, and Masmoudi and Nakanishi \cite{MaNa02}. 
We can obtain better results for the non-delay limit problem than that for the non-relativistic limit problem due to the dissipation property. More precisely, we get the better convergence rate of the $L^2$-norm and we also obtain the global-in-time uniform convergence of the non-delay limit in the $L^2$-supercritical case. 
\end{abstract}

\tableofcontents



\section{Introduction}

\subsection{Background}
We consider the following nonlinear damped wave equation.
\begin{align}
\label{NLDW}
\tag{NLDW$_{\tau}$}
\begin{cases}
\tau \partial_t^2 u_{\tau} -\Delta u_{\tau} +\partial_t u_{\tau} = \mu |u_{\tau}|^{p-1}u_{\tau}, & (t,x) \in (0,T) \times \R^d,
\\
(u_\tau(0), \partial_t u_{\tau}(0)) =(f_{\tau}, g_{\tau}), & x \in  \R^d,
\end{cases}
\end{align}
where $\tau >0$, $T>0$, $d \in \mathbb{N}$, $1<p<1+4/(d-2)$ if $d \geq 3$ and $1<p<\infty$ if $d =1, 2$, $\mu = \pm 1$, and $(f_{\tau},g_{\tau}) \in H^1(\mathbb{R}^d) \times L^2(\mathbb{R}^d)$ are given initial data depending on $\tau$. The nonlinerity is called focusing when $\mu=1$ and defocusing when $\mu=-1$. The power of the nonlinearity is called energy subcritical. 
It is well known that the energy $E_{\tau}$ decays, where it is defined by 
\begin{align*}
	E_{\tau}(u_{\tau}(t)):= \frac{1}{2} \| \nabla u_{\tau}(t)\|_{L^2} +  \frac{\tau}{2} \| \partial_t u_{\tau}(t)\|_{L^2} - \frac{\mu}{p+1} \|u_{\tau}(t)\|_{L^{p+1}}.
\end{align*}
Indeed, we have
\begin{align*}
	\frac{d}{dt} E_{\tau}(u_{\tau}(t)) = -\|u_{\tau}(t)\|_{L^2} \leq 0,
\end{align*}
where $u_{\tau}$ is the solution of \eqref{NLDW}. Thus, the global solution of \eqref{NLDW} decays. 

The parameter $\tau$ denotes time delay. To explain this, we recall the derivation in the current literature of the linear damped wave equation 
\begin{align}
\label{DW}
\tag{DW$_\tau$}
	\tau \partial_t^2 \phi_{\tau} -\Delta \phi_{\tau} +\partial_t \phi_{\tau}=0.
\end{align}
by the Cattaneo law (see e.g. \cite[(1.9)--(1.11)]{Str11}). It is well known that the heat equation $\partial_t \phi - \Delta \phi =0$ is derived from the Fourier law. More precisely, letting $\phi$ denote temperature and $q$ denote heat flux, then we have
\begin{align}
\label{eq1.1}
	\partial_t \phi = - \div q.
\end{align}
The Fourier law implies that the flux depends linearly on the derivative of temperature $\phi$, i.e.
\begin{align}
\label{eq1.2}
	q = - \nabla \phi,
\end{align}
where we take thermal conductivity as 1.
Combining \eqref{eq1.1} and \eqref{eq1.2}, we obtain the heat equation. On the other hand, the Cattaneo law implies the damped wave equation. The Cattaneo law states that the flux $q$ does not depend linearly on $\nabla \phi(t)$ at the same time, but it depends linearly on $\nabla \phi (t-\tau)$ with a slight time lag $\tau$. We replace \eqref{eq1.2} by 
\begin{align}
\label{eq1.3}
	q(t,x) = - \nabla \phi ( t-\tau,x),
\end{align}
where $\tau>0$. From this, we have
\begin{align*}
	q(t+\tau,x) = - \nabla \phi ( t,x).
\end{align*}
By the Taylor expansion of the left hand side at $\tau=0$, we obtain
\begin{align*}
	q(t+\tau,x) = \sum_{n=0}^{\infty} \frac{ \partial_t^{(n)} q(t,x) }{n!} \tau^n.
\end{align*}
Since $\tau$ is small, we ignore the higher terms ($n\geq 2$) and thus we obtain 
\begin{align*}
	q(t,x) + \tau \partial_t q(t,x) = - \nabla \phi ( t,x).
\end{align*}
Combining this with \eqref{eq1.1}, we obtain the damped wave equation
\begin{align*}
	\tau \partial_t^2 \phi + \partial_t \phi = -\tau \div \partial_t q - \div q= \div(\nabla \phi) = \Delta \phi
\end{align*}
and $\tau$ denotes the time delay effect, which is called a relaxation time and given by the inverse square of speed of the second sound. 
If $\tau$ goes to 0, then the damped wave equation formally goes to the heat equation. 
In the present paper, the singular limit $\tau \to 0$ is called non-delay limit. 
The non-delay limit problem for the linear equations is mathematically investigated well (see e.g. \cite{Fat83,Fat85}). In the present paper, we consider the non-delay limit problem for the nonlinear equation \eqref{NLDW}. 
As $\tau \to 0$, we formally  obtain the nonlinear heat equation
\begin{align*}
	\partial_t v - \Delta v = \mu |v|^{p-1}v.
\end{align*}
We will show mathematically that the solution of \eqref{NLDW} converges to the solution of the nonlinear heat equation. This problem was partially studied in \cite{Naj93,Naj98}. 
On the other hand, singular limit problems for nonlinear relativistic equation to nonlinear dispersive equation are investigated. The second author, Nakanishi, and Ozawa \cite{MNO02} shows that the solution of the energy subcritical nonlinear Klein--Gordon equation goes to that of the energy subcritical nonlinear Schr\"{o}dinger equation if the speed of the light goes to infinity. 
More precisely, they consider the nonlinear Klein--Gordon equation
\begin{align*}
	\tau \partial_t^2 w_{\tau} - \Delta w_{\tau}+ \tau^{-1} w_{\tau} =\mu |w_{\tau}|^{p-1}w_{\tau}
\end{align*}
where $\tau^{-1/2}=c$ denotes the speed of the light. By modulated function $\psi_{\tau}=e^{-i\tau^{-1}t}w_{\tau}$, they transform the nonlinear Klein--Gordon equation into 
\begin{align*}
	\tau \partial_t^2 \psi_{\tau} - \Delta \psi_{\tau}+ i \partial_t \psi_{\tau} =\mu |\psi_{\tau}|^{p-1}\psi_{\tau}.
\end{align*}
They showed that the solution $\psi_{\tau}$ goes to the solution $\psi_{0}$ of the following nonlinear Schr\"{o}dinger equation 
\begin{align}
\tag{NLS}
\label{NLS}
	i \partial_t \psi_0 - \Delta \psi_{0} = \lambda |\psi_{0}|^{p-1}\psi_{0}
\end{align}
as $\tau$ goes to $0$, where this means the speed of the light $c$ goes to infinity and it is called non-relativistic limit. More concretely, they showed local-in-time uniform convergence, i.e.
\begin{align*}
	\| \psi_{\tau} -\psi_{0} \|_{L^{\infty}(0,T: H^1)} \to 0 \text{ as } \tau \to 0
\end{align*}
for finite fixed $T$ less than the maximal existence time of the solution to \eqref{NLS}. 
See \cite{MNO03,MaNa03,FLS16} for other equations. 

We will give such a statement for the non-delay limit from \eqref{NLDW} to the nonlinear heat equation. 
We apply the arguments in \cite{MNO02} and the sequel works by Nakanishi \cite{Nak02} and Masmoudi and Nakanishi \cite{MaNa02} to our non-delay limit problem. 

However, it is not mere application and we obtain better results than them due to the dissipation. 
More precisely, we obtain the following difference between our equations and their equations. 
The almost optimal convergence rate of the $L^2$-norm is $\tau^{1/4}$ in the non-relativistic limit problem. However, we will find that the rate is $\tau^{1/2}$ for non-delay limit. 
Moreover,  we can obtain global-in-time uniform convergence for the non-delay limit in the $L^2$-supercritical case, i.e. $p>1+4/d$, though this does not hold for the non-relativistic limit as pointed out in \cite{Nak02}. 

\subsection{Main result}

It is well known that there exist $T_{\tau}=T(\tau, \|f_\tau\|_{H^1}, \tau^{1/2}\|g_{\tau}\|_{L^2})>0$  and the solution $u_{\tau} \in C([0,T_{\tau}): H^1(\mathbb{R}^d))$ to \eqref{NLDW} for fixed $\tau$.

First, we have uniform boundedness of the solution $u_{\tau}$. 

\begin{theorem}[Uniform bound]
\label{thm1.1}
Let $u_{\tau} \in C([0,T_{\tau}): H^1(\mathbb{R}^d))$ be a solution to \eqref{NLDW} and $T_{\tau}^{*}$ be the maximal existence time. If the initial data $(f_{\tau},g_{\tau}) \in  H^1(\mathbb{R}^d) \times  L^2(\mathbb{R}^d)$ satisfies
\begin{align*}
	\limsup_{\tau \to 0} ( \| f_{\tau} \|_{H^1} + \tau^{\frac{1}{2}} \|g_{\tau}\|_{L^2}) < \infty,
\end{align*}
then we have
\begin{align*}
	T^{*}:=\liminf_{\tau \to 0} T_{\tau}^{*}>0
\end{align*}
and $u_{\tau}$ satisfies that for any $T<T^{*}$ there exist a constant $C_T>0$ and $\tau_T>0$ such that 
\begin{align*}
	\|u_{\tau}\|_{L^{\infty}(0,T:H^1)} 
	+ \tau^{\frac{1}{2}}\| \partial_t u_{\tau} \|_{L_t^{\infty}(0,T:L^2)}
	\leq C_T
\end{align*}
for any $\tau \in (0,\tau_T)$.
\end{theorem}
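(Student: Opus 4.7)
My plan centers on the dissipation identity
\begin{equation*}
\frac{d}{dt}E_\tau(u_\tau(t)) = -\|\partial_t u_\tau(t)\|_{L^2}^2,
\end{equation*}
from which I extract simultaneously $E_\tau(u_\tau(t)) \leq E_\tau(u_\tau(0))$ and the key dissipation bound $\int_0^t\|\partial_t u_\tau(s)\|_{L^2}^2\,ds = E_\tau(u_\tau(0)) - E_\tau(u_\tau(t))$. Since $p$ is energy subcritical, the Sobolev embedding $H^1 \hookrightarrow L^{p+1}$ turns the hypothesis into a uniform initial-energy bound $E_\tau(u_\tau(0)) \leq C(M_0)$, where $M_0 := \limsup_\tau(\|f_\tau\|_{H^1} + \tau^{1/2}\|g_\tau\|_{L^2})$.

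Because the energy does not control $\|u_\tau\|_{L^2}$ directly, my next step is to note that Cauchy--Schwarz applied to $\tfrac{d}{dt}\|u_\tau\|_{L^2}^2 = 2\tbra{u_\tau}{\partial_t u_\tau}$ yields $\tfrac{d}{dt}\|u_\tau(t)\|_{L^2} \leq \|\partial_t u_\tau(t)\|_{L^2}$; integrating and applying Cauchy--Schwarz in time together with the dissipation bound above gives
\begin{equation*}
\|u_\tau(t)\|_{L^2} \leq \|f_\tau\|_{L^2} + t^{1/2}\bigl(E_\tau(u_\tau(0)) - E_\tau(u_\tau(t))\bigr)^{1/2}.
\end{equation*}
In the defocusing case $\mu=-1$ the energy is coercive, so $\|\nabla u_\tau\|_{L^2}^2 + \tau\|\partial_t u_\tau\|_{L^2}^2 \leq 2E_\tau(u_\tau(0))$ is already uniform, and combined with the inequality above this yields the theorem at once with $T^*=\infty$.

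In the focusing case $\mu=+1$ I would close the argument by a continuity/bootstrap on $Y_\tau(t) := \|u_\tau(t)\|_{H^1}^2 + \tau\|\partial_t u_\tau(t)\|_{L^2}^2$. Combining the energy identity $\|\nabla u_\tau\|_{L^2}^2 + \tau\|\partial_t u_\tau\|_{L^2}^2 \leq 2E_\tau(u_\tau(0)) + \tfrac{2}{p+1}\|u_\tau\|_{L^{p+1}}^{p+1}$ with the $L^2$-bound above and the lower bound $-E_\tau(t)\leq \tfrac{1}{p+1}\|u_\tau(t)\|_{L^{p+1}}^{p+1}$, and using the Sobolev estimate $\|u_\tau\|_{L^{p+1}}^{p+1}\cleq Y_\tau(t)^{(p+1)/2}$, I expect to obtain an inequality of the form
\begin{equation*}
Y_\tau(t) \leq A(M_0) + B(1+t)\,Y_\tau(t)^{(p+1)/2}, \qquad 0\leq t < T_\tau^*.
\end{equation*}
Starting from $Y_\tau(0)\leq A(M_0)$ and analysing the algebraic equation obtained at equality, a standard continuity argument produces a uniform bound $Y_\tau(t)\leq C(M_0)$ on $[0,T_0(M_0)]$ for some $T_0(M_0)>0$. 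The $H^1$ blow-up criterion from local well-posedness then gives $T_\tau^*\geq T_0(M_0)$, hence $T^* := \liminf_\tau T_\tau^*\geq T_0(M_0) > 0$. To extend to arbitrary $T<T^*$, one iterates the bootstrap on successive subintervals, using at each restart the uniform bound attained at the end of the previous step as the new data size; since for any $T<T^*$ the solution $u_\tau$ exists on $[0,T]$ for all sufficiently small $\tau$, finitely many such steps cover $[0,T]$ and produce the uniform bound $C_T$. The principal obstacle is precisely this bootstrap: when $p$ is $L^2$-supercritical, no Gagliardo--Nirenberg absorption is available to remove $\|u_\tau\|_{L^{p+1}}^{p+1}$ into $\|\nabla u_\tau\|_{L^2}^2$, so the super-quadratic inequality above closes only on an interval $T_0$ that shrinks with the data size, and the iteration---not a single energy estimate---is what carries the uniform control up to any $T<T^*$.
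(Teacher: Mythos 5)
Your route is genuinely different from the paper's: you run a pointwise-in-time energy bootstrap based on the dissipation identity, whereas the paper writes the Duhamel formula and closes a space--time estimate of the form $\|u_{\tau}\|_{\mathscr{E}\cap\mathscr{Z}_{\theta}\cap\mathscr{X}[0,T]}\lesssim \|f_{\tau}\|_{H^1}+\tau^{1/2}\|g_{\tau}\|_{L^2}+T^{1-\theta}\|u_{\tau}\|^{p}_{\mathscr{E}\cap\mathscr{Z}_{\theta}\cap\mathscr{X}[0,T]}$ using the low/high-frequency Strichartz estimates and the frequency-decoupled nonlinear estimate of \cite[Lemma 3.4]{MNO02}. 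Your method is more elementary, treats all dimensions at once, and settles the defocusing case globally in one stroke; the correct dissipation law $\frac{d}{dt}E_{\tau}=-\|\partial_t u_{\tau}\|_{L^2}^2$ that you use is indeed the right one (modulo the usual approximation argument needed to justify it for $H^1\times L^2$ solutions when $p>d/(d-2)$). What it does not deliver are the uniform Strichartz bounds in $\mathscr{Z}_{\theta}$ and $\mathscr{X}$, which the paper's proof of this theorem produces as a by-product and which are then invoked verbatim in the proofs of Theorems \ref{thm1.2} and \ref{thm1.3}; with your approach these would have to be recovered afterwards from the $L^\infty_tH^1$ bound.

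The genuine gap is the passage from the first bootstrap interval $[0,T_0(M_0)]$ to an arbitrary $T<T^{*}$. Because the inequality $Y_{\tau}(t)\le A+B(1+t)Y_{\tau}(t)^{(p+1)/2}$ is super-quadratic, each restart yields a bound $C_{k+1}=C(C_k)>C_k$ on a subinterval of length $T_0(C_k)$ that shrinks as $C_k$ grows, and nothing prevents $\sum_k T_0(C_k)$ from converging to some $t_\infty<T$. The justification you give --- that each $u_{\tau}$ exists on $[0,T]$ for small $\tau$ --- does not close this: existence of every individual solution on $[0,T]$ is perfectly compatible with $\sup_{\tau}\|u_{\tau}\|_{L^{\infty}(0,T;H^1)}=\infty$, which is precisely what the theorem excludes. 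The standard repair (implicit in the paper, cf.\ the induction carried out in Theorem \ref{thm1.3} and in \cite{MNO02}) is to propagate the uniform bound together with the convergence $u_{\tau}\to v$: then the restart data at time $t_k$ have size $\|v(t_k)\|_{H^1}+o(1)\le \sup_{[0,T]}\|v\|_{H^1}+1$, so the step length admits a lower bound independent of $k$ and finitely many steps do cover $[0,T]$. Without coupling the iteration to $v$ (or supplying some other a priori bound), your argument establishes the stated conclusion only for $T\le T_0(M_0)$ in the focusing, $L^2$-supercritical case.
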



Let $v$ be a solution of 
\begin{align}
\label{NLH}
\tag{NLH}
	\begin{cases}
	\partial_t v - \Delta v = \mu |v|^{p-1}v, & (t,x) \in (0,T) \times \R^d,
	\\
	v(0) =f, & x \in \mathbb{R}^d
	\end{cases}
\end{align}
It is well known that local well-posedness of \eqref{NLH} in $H^1$ holds.

Next, we have the following $L^2$-convergence result in the non-delay limit.

\begin{theorem}[$L^2$-convergence with rate]
\label{thm1.2}
Let the assumptions of Theorem \ref{thm1.1} be satisfied. 
Let $f\in H^1(\mathbb{R}^d)$, $v$ be a solution to \eqref{NLH}, and $T_{\max}(v)$ is the maximal existence time of the solution $v$ to \eqref{NLH}. 
Then, for any $T<\min\{T^{*},T_{\max}(v)\}$, we have
\begin{align*}
	\| u_{\tau} - v\|_{L^{\infty}(0,T: L^2(\mathbb{R}^d))} 
	\lesssim   \|f_{\tau}-f\|_{L^2} + \tau \|g_{\tau}\|_{L^2}  + \tau^{\frac{1}{2}}.
\end{align*}
for any $\tau \in (0,\tau_T)$, where the implicit constant is independent of $\tau$. 
\end{theorem}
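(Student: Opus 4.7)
The plan is to analyse $w := u_\tau - v$ by an $L^2$ energy estimate, regarding $w$ as a solution of a heat equation with an $O(\tau)$ forcing term. Subtracting \eqref{NLH} from \eqref{NLDW} (after moving $\tau\partial_t^2 u_\tau$ to the right-hand side) gives
\begin{align*}
	\partial_t w - \Delta w = \mu\bigl(|u_\tau|^{p-1}u_\tau - |v|^{p-1}v\bigr) - \tau\,\partial_t^2 u_\tau,\qquad w(0)=f_\tau - f.
\end{align*}
Pairing with $w$ in $L^2_x$ and integrating over $[0,t]$ yields the identity
\begin{align*}
	\tfrac12\|w(t)\|_{L^2}^2 + \int_0^t\|\nabla w\|_{L^2}^2\,ds = \tfrac12\|f_\tau - f\|_{L^2}^2 + \mu\!\int_0^t\!\!\int\! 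N\,w\,dx\,ds - \tau\!\int_0^t\!\!\int\!\partial_t^2 u_\tau\,w\,dx\,ds,
\end{align*}
where $N = |u_\tau|^{p-1}u_\tau - |v|^{p-1}v$.

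For the nonlinear term I would use the pointwise bound $|N|\lesssim(|u_\tau|^{p-1}+|v|^{p-1})|w|$, Hölder's inequality with the exponent $p+1$, and the Sobolev embedding $H^1\hookrightarrow L^{p+1}$ (valid by energy subcriticality). The resulting bound $C_T\|w\|_{L^{p+1}}^2$ is then handled by a Gagliardo--Nirenberg interpolation and Young's inequality to produce $C_{T,\epsilon}\|w\|_{L^2}^2 + \epsilon\|\nabla w\|_{L^2}^2$, the gradient piece being absorbed on the left. The constants here use the uniform $H^1$-bound on $u_\tau$ given by Theorem \ref{thm1.1} together with $v\in C([0,T];H^1)$ from well-posedness of \eqref{NLH}.

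The decisive step is the forcing term from $\tau\partial_t^2 u_\tau$. The key idea is to integrate by parts in time:
\begin{align*}
	-\tau\!\int_0^t\!\!\int\!\partial_t^2 u_\tau\,w\,dx\,ds = -\tau\!\int\!\partial_t u_\tau(t)\,w(t)\,dx + \tau\!\int\! g_\tau(f_\tau - f)\,dx + \tau\!\int_0^t\!\!\int\!\partial_t u_\tau\,\partial_t w\,dx\,ds,
\end{align*}
and to split $\partial_t w = \partial_t u_\tau - \partial_t v$. Although Theorem \ref{thm1.1} only controls $\|\partial_t u_\tau\|_{L^\infty_t L^2_x}$ at the rate $\tau^{-1/2}$, the dissipation identity $\tfrac{d}{dt}E_\tau(u_\tau) = -\|\partial_t u_\tau\|_{L^2}^2$ combined with the two-sided uniform bound on $E_\tau(u_\tau(s))$ (itself a consequence of Theorem \ref{thm1.1} and energy-subcritical Sobolev control of $\|u_\tau\|_{L^{p+1}}$) yields the crucial estimate $\int_0^T\|\partial_t u_\tau\|_{L^2}^2\,ds \leq C_T$, uniform in $\tau$. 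The analogous energy identity for \eqref{NLH} gives $\int_0^T\|\partial_t v\|_{L^2}^2\,ds \leq C_T$. Cauchy--Schwarz then bounds each of $\tau\!\int_0^t\!\int\!|\partial_t u_\tau|^2$ and $\tau\!\int_0^t\!\int\!\partial_t u_\tau\,\partial_t v$ by $C_T\tau$, while the boundary term at $t$ is absorbed by Young's inequality $\tau\bigl|\int\!\partial_t u_\tau(t)w(t)\bigr| \leq \tfrac14\|w(t)\|_{L^2}^2 + \tau^2\|\partial_t u_\tau(t)\|_{L^2}^2 \leq \tfrac14\|w(t)\|_{L^2}^2 + C_T\tau$.

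Collecting these estimates yields
\begin{align*}
	\|w(t)\|_{L^2}^2 \lesssim \|f_\tau - f\|_{L^2}^2 + \tau\|g_\tau\|_{L^2}\|f_\tau - f\|_{L^2} + \tau + \int_0^t\|w\|_{L^2}^2\,ds,
\end{align*}
and Grönwall's inequality on $[0,T]$ followed by a square root and the AM--GM bound $\sqrt{\tau\|g_\tau\|_{L^2}\|f_\tau-f\|_{L^2}}\leq\tfrac12(\tau\|g_\tau\|_{L^2}+\|f_\tau-f\|_{L^2})$ delivers the claim. The main obstacle is exactly the uniform-in-$\tau$ dissipation bound $\int_0^T\|\partial_t u_\tau\|_{L^2}^2\,ds\leq C_T$: it replaces the naive $O(\tau^{-1})$ control from Theorem \ref{thm1.1} and is what promotes the non-relativistic $\tau^{1/4}$ rate of \cite{MNO02} to the sharper $\tau^{1/2}$ rate advertised in the introduction.
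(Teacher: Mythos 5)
Your proof is correct, and it takes a genuinely different route from the paper's. The paper argues from the Duhamel representation: it splits $u_{\tau}-v$ into five pieces (the linear flow of $f_{\tau}-f$, the difference of the damped-wave and heat propagators applied to $f$, the term $\mathcal{D}_{\tau}(t)g_{\tau}$, and two inhomogeneous terms), decomposes each into low and high frequencies at $|\xi|\sim\tau^{-1/2}$, and estimates them in the mixed Strichartz norm $\mathscr{Y}$; the rate $\tau^{1/2}$ there emerges from the symbol comparison of $e^{t\lambda_{\tau}^{+}}$ with $e^{-t|\xi|^{2}}$ and the exponential decay of the $e^{t\lambda_{\tau}^{-}}$ mode. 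You instead run a direct $L^{2}$ energy estimate on the equation for $w=u_{\tau}-v$, treating $\tau\partial_t^2 u_{\tau}$ as a forcing, and you extract the rate from the uniform-in-$\tau$ dissipation bound $\int_0^T\|\partial_t u_{\tau}\|_{L^2}^2\,ds\le C_T$ coming from the energy identity together with the uniform $H^1$ bound of Theorem \ref{thm1.1} — this is precisely the mechanism the authors invoke only heuristically when explaining why dissipation upgrades the non-relativistic $\tau^{1/4}$ rate, but they never deploy it this way in their proof. Your approach is markedly more elementary (no Strichartz estimates, no frequency decoupling, no Lemma \ref{lem3.7}, and all dimensions are treated at once), at the price of delivering only the $L^{\infty}_tL^2_x$ bound rather than the full $\mathscr{Y}$-norm control; since the downstream compactness argument for $H^1$-convergence consumes only the $L^{\infty}_tL^2_x$ convergence, this loss is harmless. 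The points you should still make explicit are routine: the pairing $\langle\partial_t^2 u_{\tau},w\rangle$ is an $H^{-1}$--$H^1$ duality and the integration by parts in time, as well as the two energy identities for $H^1\times L^2$ (resp.\ $H^1$) solutions, require justification by approximation; and the absorption of $\epsilon\|\nabla w\|_{L^2}^2$ needs the Gagliardo--Nirenberg exponent $d\left(\frac12-\frac{1}{p+1}\right)<1$, which is exactly energy subcriticality.
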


\begin{remark}
For the non-relativistic problem, the optimal rate of $L^2$-convergence was reported in \cite{MNO02}. That is $\tau^{1/4}$. On the other hand, we obtain $\tau^{1/2}$ in the non-delay limit problem. This is a main difference between dispersive equations and dissipative equation. 
To explain roughly this difference, we consider the main parts of the convergence. 
For the non-relativistic problem, we have
\begin{align*}
	\| (e^{it\frac{1-\sqrt{1+4\tau|\xi|^2}}{2\tau}} - e^{it|\xi|^2}) \hat{f} \|_{L^2(|\xi|\leq \tau^{-1/4})}
	&\lesssim \int_{0}^{\tau} \left\|  i t |\xi|^4 e^{it\frac{1-\sqrt{1+4\tilde{\tau}|\xi|^2}}{2\tilde{\tau}}} \hat{f} \right\|_{L^2(|\xi|\leq \tau^{-1/4})}d\tilde{\tau} 
	\\
	&\lesssim  \tau^{\frac{1}{4}} t\|f\|_{H^1}
\end{align*} 
and we have
\begin{align*}
	\| (e^{it\frac{1-\sqrt{1+4\tau|\xi|^2}}{2\tau}} - e^{it|\xi|^2}) \hat{f} \|_{L^2(|\xi|> \tau^{-1/4})}
	\lesssim \tau^{\frac{1}{4}} \|f\|_{H^1}
\end{align*}
since $|\xi|^{-1}\leq \tau^{1/4}$ in the high frequency region. 
This estimate implies that the optimal convergence rate is $\tau^{1/4}$. 
On the other hand, for our problem, we have 
\begin{align*}
	\| (e^{t\frac{1-\sqrt{1-4\tau|\xi|^2}}{2\tau}} - e^{-t|\xi|^2} )\hat{f} \|_{L^2(|\xi|\leq \tau^{-1/2})}
	&\lesssim \int_{0}^{\tau} \left\|  t |\xi|^4 e^{t\frac{1-\sqrt{1-4\tilde{\tau}|\xi|^2}}{2\tilde{\tau}}} \hat{f}  \right\| _{L^2(|\xi|\leq \tau^{-1/2})}d\tilde{\tau}
	\\
	&\lesssim  \tau^{\frac{1}{2}} \|f\|_{H^1}.
\end{align*}
since $t|\xi|^2 e^{t\frac{1-\sqrt{1+4\tilde{\tau}|\xi|^2}}{2\tilde{\tau}}} \lesssim   t|\xi|^2e^{ -t|\xi|^2}\lesssim  1$ and we also have
\begin{align*}
	\| (e^{t\frac{1-\sqrt{1-4\tau|\xi|^2}}{2\tau}} - e^{-t|\xi|^2} )\hat{f} \|_{L^2(|\xi|> \tau^{-1/2})}
	\lesssim \tau^{\frac{1}{2}}\|f\|_{H^1}.
\end{align*}
This shows that our rate is $\tau^{1/2}$. 

%
%
\end{remark}

We have the following locally uniform $H^1$-convergence of the non-delay limit. 

\begin{theorem}[$H^1$-convergence]
\label{thm1.3}
Let $f\in H^1(\mathbb{R}^d)$, $v$ be a solution to \eqref{NLH}, and $T_{\max}(v)$ is the maximal existence time of the solution $v$ to \eqref{NLH}. 
If the initial data $(f_{\tau},g_{\tau}) \in  H^1(\mathbb{R}^d) \times  L^2(\mathbb{R}^d)$ satisfies
\begin{align*}
	(f_{\tau},\tau^{\frac{1}{2}} g_{\tau}) \to (f,0) \text{ in } H^1(\mathbb{R}^d)\times L^2(\mathbb{R}^d) \text{ as } \tau \to 0,
\end{align*} 
then we have $T^{*}\geq T_{\max}(v)$ and we have
\begin{align*}
	\| u_{\tau} - v\|_{L^{\infty}(0,T: H^1(\mathbb{R}^d))} 
	+ \tau^{\frac{1}{2}} \| \partial_t u_{\tau}\|_{L^{\infty}(0,T: L^2(\mathbb{R}^d))}
	\to 0
\end{align*}
as $\tau \to 0$ for any $T<T_{\max}(v)$. 
\end{theorem}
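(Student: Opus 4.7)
The plan is to leverage Theorem \ref{thm1.2} to identify the asymptotic full energy of $u_\tau$ with that of $v$, and then invoke the weak-plus-norm criterion in the Hilbert space $H^1(\R^d)$ to upgrade this to strong $L^\infty(0,T:H^1)$-convergence.

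\textbf{Continuation and pointwise weak convergence.} By Theorem \ref{thm1.1}, $\{u_\tau\}$ is uniformly bounded in $L^\infty(0,T:H^1)$ on any $[0,T]$ with $T<T^{*}$. A standard continuation argument, combining this uniform bound with the strong $L^\infty L^2$-convergence of Theorem \ref{thm1.2} and the local existence criterion for \eqref{NLDW}, yields $T^{*}\geq T_{\max}(v)$; fix $T<T_{\max}(v)$. Gagliardo--Nirenberg interpolation between the $L^\infty_{t}L^2_{x}$-convergence and the uniform $H^1$-bound gives strong convergence $u_\tau\to v$ in $L^\infty(0,T:L^{p+1})$; moreover $u_\tau(t)\rightharpoonup v(t)$ weakly in $H^1$ for each fixed $t$, since any weak $H^1$-subsequential limit must coincide with the $L^2$-limit $v(t)$.

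\textbf{Energy convergence.} From the dissipation identities
\begin{equation*}
E_\tau(u_\tau(t))+\int_{0}^{t}\|\partial_t u_\tau\|_{L^2}^{2}\,ds=E_\tau(u_\tau(0)),\qquad E_0(v(t))+\int_{0}^{t}\|\partial_t v\|_{L^2}^{2}\,ds=E_0(f),
\end{equation*}
together with $E_\tau(u_\tau(0))\to E_0(f)$ (from $(f_\tau,\tau^{1/2}g_\tau)\to(f,0)$ and continuity of the $L^{p+1}$ functional on $H^1$), the task reduces to matching the dissipation integrals. The uniform bound on $\partial_t u_\tau$ in $L^{2}((0,T)\times\R^d)$ from the dissipation, combined with strong $L^2_{t,x}$-convergence $u_\tau\to v$, identifies the weak limit $\partial_t u_\tau\rightharpoonup\partial_t v$ in $L^{2}((0,T)\times\R^d)$; lower semicontinuity yields $\limsup_{\tau\to 0}E_\tau(u_\tau(t))\leq E_0(v(t))$, while weak lower semicontinuity of $\|\nabla\cdot\|_{L^2}$ together with the $L^{p+1}$-convergence gives the opposite bound. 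Since each $t\mapsto E_\tau(u_\tau(t))$ is non-increasing and $E_0(v(\cdot))$ is continuous, a version of Dini's theorem for sequences of monotone functions upgrades the resulting pointwise convergence to uniform convergence on $[0,T]$. Combined with the uniform $L^{p+1}$-convergence this yields
\begin{equation*}
\|\nabla u_\tau(t)\|_{L^2}^{2}+\tau\|\partial_t u_\tau(t)\|_{L^2}^{2}\to\|\nabla v(t)\|_{L^2}^{2}\quad\text{uniformly on }[0,T].
\end{equation*}

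\textbf{Strong $H^1$-convergence.} Arguing by contradiction, suppose $\|u_\tau-v\|_{L^\infty(0,T:H^1)}+\tau^{1/2}\|\partial_t u_\tau\|_{L^\infty(0,T:L^2)}\not\to 0$. Extract $\tau_n\to 0$ and $t_n\to t_\ast\in[0,T]$ along which a positive discrepancy is attained. As above, $u_{\tau_n}(t_n)\rightharpoonup v(t_\ast)$ in $H^1$. The weak lower semicontinuity bound $\|\nabla v(t_\ast)\|_{L^2}^{2}\leq\liminf_n\|\nabla u_{\tau_n}(t_n)\|_{L^2}^{2}$ combined with continuity of $v$ and the uniform convergence of the preceding display forces
\begin{equation*}
\tau_n\|\partial_t u_{\tau_n}(t_n)\|_{L^2}^{2}\to 0\quad\text{and}\quad\|\nabla u_{\tau_n}(t_n)\|_{L^2}\to\|\nabla v(t_\ast)\|_{L^2}.
\end{equation*}
Weak convergence together with convergence of norms in the Hilbert space $H^1$ then yields strong convergence $u_{\tau_n}(t_n)\to v(t_\ast)$ in $H^1$, contradicting the choice of $t_n$. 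The main obstacle throughout this plan is ensuring uniform---rather than merely pointwise---convergence of the energies; this is where the dissipative structure of \eqref{NLDW} (monotonicity of $t\mapsto E_\tau(u_\tau(t))$) is essential, a feature unavailable in the non-relativistic limit setting of \cite{MNO02,Nak02,MaNa02}.
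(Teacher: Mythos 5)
Your argument is essentially correct, but it follows a genuinely different route from the paper. The paper proves Theorem \ref{thm1.3} via Proposition \ref{prop2.8}: a Littlewood--Paley compactness argument in frequency, using the refined frequency-localized nonlinear estimate of Masmoudi--Nakanishi (Lemma \ref{lem3.8}) to show that the high-frequency tail $\|\chi_{>R_\eps}u_\tau\|_{\dot{\mathscr{E}}}$ is small uniformly in $\tau$, and then combining this equi-integrability with the $L^2$-convergence of Theorem \ref{thm1.2}; the authors explicitly state that the energy-conservation argument of \cite{MNO02} is unavailable because the energy of \eqref{NLDW} decays. Your proposal shows that the dissipative structure can nonetheless be exploited: the key observation is that one only needs the one-sided bound $\liminf_\tau\int_0^t\|\partial_t u_\tau\|_{L^2}^2\,ds\geq\int_0^t\|\partial_t v\|_{L^2}^2\,ds$ (from $\partial_t u_\tau\rightharpoonup\partial_t v$ in $L^2_{t,x}$) to get $\limsup_\tau E_\tau(u_\tau(t))\leq E_0(v(t))$, while weak lower semicontinuity of the gradient norm together with the strong $L^{p+1}$-convergence gives the reverse inequality, and the two squeeze. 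This is more elementary than the paper's route (no refined frequency-decoupled nonlinear estimates), it delivers $\tau\|\partial_t u_\tau(t)\|_{L^2}^2\to 0$ uniformly essentially for free, and the Dini/P\'olya step correctly exploits monotonicity of $t\mapsto E_\tau(u_\tau(t))$ to upgrade pointwise to uniform convergence. What the paper's method buys in exchange is robustness: it needs no energy identity at all, and the frequency-localized bounds it produces are reused in the global-in-time results (Theorems \ref{thm1.4} and \ref{cor1.5}).

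Two points need repair. First, the ordering of the continuation step: the claim $T^{*}\geq T_{\max}(v)$ cannot be extracted from Theorem \ref{thm1.1} plus the $L^2$-convergence alone, because the constant $C_T$ of Theorem \ref{thm1.1} may blow up as $T\to T^{*}$, and the $L^2$-bound does not control $\|u_\tau\|_{H^1}$ near $T^{*}$. You should first run your energy argument on $[0,T]$ for $T<\min\{T^{*},T_{\max}(v)\}$ (where Theorems \ref{thm1.1} and \ref{thm1.2} apply), and only then deduce $T^{*}\geq T_{\max}(v)$ by contradiction, using the resulting uniform $H^1$-bound $\|u_\tau\|_{L^\infty(0,T;H^1)}\lesssim\|v\|_{L^\infty(0,T^{*};H^1)}+1$ and the fact that the local existence time depends only on $\|f_\tau\|_{H^1}+\tau^{1/2}\|g_\tau\|_{L^2}$; this is exactly the bootstrap the paper performs after Proposition \ref{prop2.8}. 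Second, you should justify the energy identities for $u_\tau$ and $v$ in the energy space (for $d\geq 3$ and $p>d/(d-2)$ the pairing $\langle|v|^{p-1}v,\partial_t v\rangle$ is not directly controlled by the $H^1$-bound alone); this is standard for energy-subcritical nonlinearities via approximation and parabolic maximal regularity, but it is an ingredient the paper's proof does not require.
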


We remark that the assumptions in Theorem \ref{thm1.1} are satisfied under the assumptions of Theorem \ref{thm1.3}.

Due to the dissipation, we obtain the following global convergence result in the $L^2$-supercritical case. 

\begin{theorem}[Global $H^1$-convergence]
\label{thm1.4}
Let $1+4/d < p < 1+4/(d-2)$. Assume that the solution $v$ to \eqref{NLH} is global and $\| v(t) \|_{H^1}$ decays to $0$ as $t \to \infty$. If the initial data $(f_{\tau},g_{\tau}) \in  H^1(\mathbb{R}^d) \times  L^2(\mathbb{R}^d)$ satisfies
\begin{align*}
	(f_{\tau},\tau^{\frac{1}{2}} g_{\tau}) \to (f,0) \text{ in } H^1(\mathbb{R}^d)\times L^2(\mathbb{R}^d) \text{ as } \tau \to 0.
\end{align*}
Then we have
\begin{align*}
	\lim_{\tau \to 0}\left( \|u_{\tau} - v\|_{L^{\infty}(0,\infty: H^1(\mathbb{R}^d))}+ \tau^{\frac{1}{2}} \| \partial_t u_{\tau}\|_{L^{\infty}(0,\infty: L^2(\mathbb{R}^d))}\right) = 0.
\end{align*}
%
\end{theorem}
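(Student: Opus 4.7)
The plan is to combine Theorem~\ref{thm1.3} on a compact time interval $[0,T_0]$ with a small-data global theory for \eqref{NLDW}, uniform in $\tau$, on the tail $[T_0,\infty)$. Since $\|v(t)\|_{H^1}\to 0$ as $t\to\infty$, for any prescribed $\eta>0$ we fix $T_0=T_0(\eta)$ so that $\sup_{t\geq T_0}\|v(t)\|_{H^1}<\eta$. Theorem~\ref{thm1.3} applied on $[0,T_0]$ yields
\begin{align*}
\|u_\tau-v\|_{L^\infty(0,T_0;H^1)}+\tau^{\frac{1}{2}}\|\partial_t u_\tau\|_{L^\infty(0,T_0;L^2)}\to 0 \quad\text{as } \tau\to 0,
\end{align*}
and hence $\|u_\tau(T_0)\|_{H^1}+\tau^{1/2}\|\partial_t u_\tau(T_0)\|_{L^2}<2\eta$ for all sufficiently small $\tau$.

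The decisive auxiliary ingredient is a uniform-in-$\tau$ small-data global well-posedness statement for \eqref{NLDW} in the $L^2$-supercritical regime $p>1+4/d$: there should exist a threshold $\eta_0>0$ and a constant $C>0$, both independent of $\tau\in(0,1]$, such that whenever the data at some initial time $s_0$ satisfy $\|u_\tau(s_0)\|_{H^1}+\tau^{1/2}\|\partial_t u_\tau(s_0)\|_{L^2}<\eta_0$, the solution extends globally forward in time with
\begin{align*}
\sup_{t\geq s_0}\Bigl(\|u_\tau(t)\|_{H^1}+\tau^{\frac{1}{2}}\|\partial_t u_\tau(t)\|_{L^2}\Bigr)\leq C\eta_0.
\end{align*}
The assumption $p>1+4/d$ is precisely what makes the scaling-critical Strichartz norm control the nonlinearity and close a contraction-mapping argument. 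Given such a statement, choosing $\eta<\eta_0/3$ and $\tau$ small, the preceding paragraph gives $\|u_\tau(T_0)\|_{H^1}+\tau^{1/2}\|\partial_t u_\tau(T_0)\|_{L^2}<\eta_0$, so the small-data theory produces $\|u_\tau(t)\|_{H^1}+\tau^{1/2}\|\partial_t u_\tau(t)\|_{L^2}\leq C\eta_0$ for every $t\geq T_0$; combined with the hypothesized decay $\|v(t)\|_{H^1}<\eta$ on this tail,
\begin{align*}
\sup_{t\geq T_0}\Bigl(\|u_\tau(t)-v(t)\|_{H^1}+\tau^{\frac{1}{2}}\|\partial_t u_\tau(t)\|_{L^2}\Bigr)\leq (C+1)\eta_0.
\end{align*}
Concatenating with the local convergence on $[0,T_0]$ and letting $\eta_0\to 0$ produces the desired global convergence.

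The main obstacle is the derivation of uniform-in-$\tau$ Strichartz-type estimates for the linear propagator of \eqref{DW}, which are needed to set up the small-data argument. Following the remark after Theorem~\ref{thm1.2}, the Fourier multipliers $e^{t(1\pm\sqrt{1-4\tau|\xi|^2})/(2\tau)}$ behave like the heat kernel $e^{-t|\xi|^2}$ on the low-frequency region $|\xi|\leq(4\tau)^{-1/2}$ and carry a uniform dissipative factor $e^{-t/(2\tau)}$ on the high-frequency region, so a frequency decomposition should transfer the mixed $L^p_tL^q_x$ smoothing bounds of the heat semigroup to the $\tau$-dependent semigroup with constants independent of $\tau\in(0,1]$. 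Once this linear theory is in place, a standard contraction argument yields the required small-data global result, and the two-interval patching described above completes the proof.
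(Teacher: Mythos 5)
Your proposal is correct and follows essentially the same route as the paper: fix $T_\varepsilon$ using the decay of $v$, use Theorem \ref{thm1.3} to make $\|u_\tau(T_\varepsilon)\|_{H^1}+\tau^{1/2}\|\partial_t u_\tau(T_\varepsilon)\|_{L^2}$ small, and then run a uniform-in-$\tau$ small-data global argument on $[T_\varepsilon,\infty)$. The paper implements your black-box ``small-data global well-posedness'' step concretely as a bootstrap in the scaling-adapted Strichartz spaces $\mathscr{Z}\cap\mathscr{V}_\eta$ (with no positive power of $T$ appearing, which is exactly where $p>1+4/d$ enters), using the uniform Strichartz estimates it has already established by the frequency decomposition you describe.
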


\begin{remark}
The non-relativistic problem from NLKG to NLS, the global convergence does not hold (see \cite{Nak02}). 
Theorem \ref{thm1.4} depends essentially on the dissipation property. 
\end{remark}

The following theorem relies on the fact that the spatial derivative implies the additional time decay $t^{-1/2}$ for the damped wave equation and the heat equation.

\begin{theorem}[$\dot{H}^1$-decay order]
\label{cor1.5}
If the assumption in Theorem \ref{thm1.4} is satisfied, then we have
\begin{align*}
	 t^{\frac{1}{2}} \| u_{\tau}(t)\|_{\dot{H}^1} \to 0 \text{ as } t \to \infty
\end{align*}
uniformly in $\tau$. Especially, we obtain
\begin{align*}
	\lim_{\tau \to 0} \left\| t^{\frac{1}{2}}( u_{\tau} - v) \right\|_{L^{\infty}(0,\infty:\dot{H}^1) }= 0.
\end{align*}
\end{theorem}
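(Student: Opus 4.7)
The plan is to upgrade the global $H^1$-convergence of Theorem \ref{thm1.4} to the weighted statement via a restart argument coupled with the parabolic smoothing effect of the damped-wave semigroup. The key linear ingredient is the following $\tau$-uniform smoothing bound for the propagator $S_\tau$ associated with \eqref{DW}: for $\tau$ sufficiently small and all $t>0$,
\begin{align*}
\| \nabla S_\tau(t)(\phi,\psi)\|_{L^2} \lesssim t^{-\frac{1}{2}}\l(\|\phi\|_{L^2}+\tau\|\psi\|_{L^2}\r) + e^{-ct/\tau}\l(\|\phi\|_{L^2}+\tau^{\frac{1}{2}}\|\psi\|_{L^2}\r).
\end{align*}
This is extracted from the explicit Fourier representation with characteristic roots $\lambda_\pm(\tau,\xi)=(-1\pm\sqrt{1-4\tau|\xi|^2})/(2\tau)$: on the low-frequency set $\{4\tau|\xi|^2\le 1\}$ the dominant root $\lambda_+$ satisfies $|\lambda_+|\ceq |\xi|^2$ so the multiplier is pointwise bounded by $|\xi|e^{-ct|\xi|^2}$, yielding the $t^{-1/2}$ gain after taking $L^\infty(\xi)$; on the high-frequency set both roots have real part $-1/(2\tau)$ and the multiplier decays like $e^{-ct/\tau}$.

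Given $\eps>0$, the assumption $\|v(t)\|_{H^1}\to 0$ combined with Theorem \ref{thm1.4} furnishes $t_0=t_0(\eps)$ and $\tau_0=\tau_0(\eps)$ such that
\begin{align*}
\|u_\tau(t_0)\|_{H^1}+\tau^{\frac{1}{2}}\|\partial_t u_\tau(t_0)\|_{L^2}<\eps \quad\text{for every } \tau\in(0,\tau_0).
\end{align*}
I would then restart \eqref{NLDW} at time $t_0$, express the solution through $S_\tau$ via Duhamel, and run a fixed-point argument in the weighted space
\begin{align*}
\|u\|_X:=\sup_{t>t_0}\l(\|u(t)\|_{L^2}+(t-t_0)^{\frac{1}{2}}\|\nabla u(t)\|_{L^2}\r).
\end{align*}
The hypothesis $p>1+4/d$ makes this norm subcritical for the nonlinear Duhamel contribution, exactly as in the small-data subcritical theory for \eqref{NLH}; consequently the smallness of $\eps$ closes the estimate and yields $(t-t_0)^{1/2}\|u_\tau(t)\|_{\dot H^1}\lesssim \eps$ for all $t>t_0$ and $\tau\in(0,\tau_0)$. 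Taking $t\ge 2t_0$ gives $t^{1/2}\|u_\tau(t)\|_{\dot H^1}\lesssim\eps$ uniformly in $\tau$, which is the first assertion.

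For the second assertion, decompose $[0,\infty)=[0,T]\cup[T,\infty)$. On $[0,T]$, Theorem \ref{thm1.3} yields $\|u_\tau-v\|_{L^\infty(0,T:H^1)}\to 0$, and the weight $t^{1/2}\le T^{1/2}$ is bounded, so this portion tends to zero as $\tau\to 0$. On $[T,\infty)$, the first assertion applied to $u_\tau$, together with the analogous heat-equation smoothing applied to $v$, gives that both $t^{1/2}\|u_\tau(t)\|_{\dot H^1}$ and $t^{1/2}\|v(t)\|_{\dot H^1}$ are bounded by a constant multiple of $\eps$ once $T$ is chosen large enough depending on $\eps$; then letting $\tau$ tend to $0$ at fixed $T$ completes the proof.

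The main obstacle is the $\tau$-uniform smoothing estimate for $S_\tau$ in the transitional regime $4\tau|\xi|^2\sim 1$, where the two roots collide and one must verify that neither the $t^{-1/2}$ bound nor its implicit constant blows up across the threshold $|\xi|\sim\tau^{-1/2}$. Once this linear ingredient is firmly in hand, the nonlinear part is a routine parabolic bootstrap that directly exploits the $L^2$-supercritical scaling; the dissipative structure ensures all exponential error terms coming from the high-frequency sector are harmless uniformly in $\tau$.
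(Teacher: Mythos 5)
Your overall strategy --- restart at a large time $t_0$ where Theorem \ref{thm1.4} and the decay of $v$ make $\|u_\tau(t_0)\|_{H^1}+\tau^{1/2}\|\partial_t u_\tau(t_0)\|_{L^2}$ small, bootstrap a weighted norm carrying the factor $(t-t_0)^{1/2}$ on the gradient, and then combine with Theorem \ref{thm1.3} on $[0,T]$ and the heat decay of $v$ --- is exactly the skeleton of the paper's proof. However, your key linear ingredient is false. The estimate
\begin{align*}
\| \nabla S_\tau(t)(\phi,\psi)\|_{L^2} \lesssim t^{-\frac{1}{2}}\left(\|\phi\|_{L^2}+\tau\|\psi\|_{L^2}\right) + e^{-ct/\tau}\left(\|\phi\|_{L^2}+\tau^{\frac{1}{2}}\|\psi\|_{L^2}\right)
\end{align*}
fails on the high-frequency region $4\tau|\xi|^2>1$: there the symbol of the first-component propagator is merely bounded (it is $e^{-t/(2\tau)}$ times an oscillatory, non-smoothing factor), so $\|\nabla S_\tau(t)(\phi,0)\|_{L^2}$ is controlled by $e^{-ct/\tau}\|\nabla\phi\|_{L^2}$ and not by $e^{-ct/\tau}\|\phi\|_{L^2}$. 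Testing on $\phi$ localized at frequency $N\gg\tau^{-1/2}$ at time $t\approx\tau$ gives a left-hand side of size $N\|\phi\|_{L^2}$ against a right-hand side of size $\tau^{-1/2}\|\phi\|_{L^2}$, so no uniform constant exists. The damped wave propagator simply does not regularize above the threshold $|\xi|\sim\tau^{-1/2}$; the gain of one derivative there costs a factor $\tau^{-1/2}$, which is recovered only through the time integration of $e^{-(t-s)/(2\tau)}$ in the Duhamel term, not in the homogeneous term.

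This defect propagates into the nonlinear step: once the high-frequency part must be handled with a derivative loss, your space $\sup_{t>t_0}(\|u\|_{L^2}+(t-t_0)^{1/2}\|\nabla u\|_{L^2})$ cannot close the estimate for $p>d/(d-2)$, because placing $|u|^{p-1}u$ (or its gradient) in an $L^2$-based space via Sobolev from $H^1$ alone is impossible in that range, and the wave-like high-frequency piece does not map $L^{r'}\to L^2$ with a parabolic gain for $r'<2$. This is precisely why the paper's proof of Theorem \ref{cor1.5} stays inside the frequency-split Strichartz framework: it bounds $\|(t-T)^{1/2}u_\tau\|_{\dot{\mathscr{Z}}(T,\infty)}$ using the elementary splitting $(t-T)^{1/2}\lesssim (t-s)^{1/2}+(s-T)^{1/2}$ (putting the weight either on the propagator, where the derivative buys $t^{-1/2}$, or on the nonlinearity), the wave-admissible spaces $\mathscr{W}_1,\mathscr{W}_2$ for high frequencies, and the already-established global smallness $\|u_\tau\|_{\mathscr{Z}\cap\mathscr{V}_\eta(T,\infty)}\lesssim\varepsilon$ from the proof of Theorem \ref{thm1.4}. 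To repair your argument you would need to replace the claimed smoothing bound by its correct frequency-split version and enlarge your function space to include the space-time norms needed for the high-frequency nonlinear estimate --- at which point you are reproducing the paper's proof.
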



\subsection{Plan of proofs}

We apply the argument in \cite{MNO02}. 
To show Theorem \ref{thm1.3}, we prepare the uniform boundedness in $\tau$, Theorem \ref{thm1.1}. The uniform boundedness in the case of $d=1,2$ can be shown easily. Indeed, it is enough to use the space $L_t^\infty H_x^1$ in order to obtain the closed estimate. For the high dimensional case $d \geq 3$, we need  the Strichartz norms and to divide them into the low and high frequency parts. The low frequency of the solution of the damped wave equation behaves like that of the heat equation and the high frequency part of that behaves like that of the wave equation. 
However, it is not trivial that the low (resp. high) frequency part of the nonlinearity is the nonlinearity of the low (resp. high) frequency part. To overcome this difficulty, we use argument for the non-relativistic limit. The second author, Nakanishi, and Ozawa \cite{MNO02} shows the nonlinear estimate of the frequency decoupling (see \cite[Lemma 3.4]{MNO02}). By applying this lemma, we obtain the closed estimate and thus the uniform boundedness. 
Next, we will show the $L^2$-convergence by a direct calculation. By using the dissipation, we obtain the better rate of the $L^2$-convergence rate. At last, we will show the $H^1$-convergence by combining $L^2$-convergence with a compactness argument. In this argument, the energy and charge conservation laws were used in \cite{MNO02}. However, the energy decays for the damped wave equation. Thus, we could not apply their argument. We will apply the compactness argument by Masmoudi and Nakanishi \cite{MaNa02}. They brushed Lemma 3.4 in \cite{MNO02} up and prove the $H^1$-convergence by a compactness method without the energy conservation laws. 

By using the argument of Nakanishi \cite{Nak02}, we obtain Theorem \ref{thm1.4}. He showed the convergence of the wave, inverse wave, and scattering operators in the non-relativistic problem. 
By applying his argument and the dissipation, we obtain the global convergence unlike the non-relativistic problem. 
Theorem \ref{cor1.5} relies on the heat-like property that the spatial first-order derivative implies the additional time decay $t^{-1/2}$. 





\section{Preliminaries}
\subsection{Notation}
For an exponent $p \in [1,\infty]$, the H\"{o}lder conjugate of $p$ is denoted by $p'$.

Let $\chi_{\leq 1} \in C_0^{\infty}(\mathbb{R}^d)$ be a radially symmetric cut-off function such that $\chi_{\leq 1}(\xi) =1$ if $|\xi| \leq 1$ and $\chi_{\leq 1}(\xi) =0$ if $|\xi| \geq 2$. We set $\chi_{\leq a}(\xi) := \chi_{\leq 1}(\xi/a)$ for a positive number $a$. We set $\chi_{> a}:= 1-\chi_{\leq a}$.

We denote the Fourier transform and its inverse by $\mathcal{F}$ and $\mathcal{F}^{-1}$, respectively. 
We also denote the Fourier transform of a function $f$ by $\hat{f}$. 
For a measurable function $m$, we define the Fourier multiplier $m(\nabla)$ by $m(\nabla):=\mathcal{F}^{-1}m(\xi) \mathcal{F}$. Thus, $\chi_{\leq \tau^{-1/2}}(\nabla)=\mathcal{F}\chi_{\leq \tau^{-1/2}}(\xi) \mathcal{F}$. We sometimes omit $\nabla$, that is, we use $\chi_{\leq \tau^{-1/2}}$ instead of $\chi_{\leq \tau^{-1/2}}(\nabla)$. We denote $P_j:=\mathcal{F} (\chi_{\leq 2^j} (\xi) -\chi_{\leq 2^{j-1}}(\xi)  )\mathcal{F}$ for $j \in \mathbb{Z}$. We also write $P_j$ to denote the symbol of $P_j$.  We set $\langle a \rangle:=(1+a^2)^{1/2}$. 

We use $A \lesssim B$ to denote the estimate $A \leq CB$ with some constant $C>0$.
The notation $A \approx B$ stands for $A \lesssim B$ and $B \lesssim A$.

For a time interval $I$, we set the space-time function space $L_{t}^{q}L_{x}^{r}(I)$ by $L_{t}^{q}(I; L_{x}^{r})$ whose norm is 
\begin{align*}
	\| f \|_{L_{t}^{q}L_{x}^{r}(I)} := \left\{ \int_{I} \left( \int_{\mathbb{R}^d} |f(t,x)|^r dx \right)^{\frac{q}{r}} dt \right\}^{\frac{1}{q}}.
\end{align*}
We also may omit the time interval $I$. We define the  Sobolev space by 
\begin{align*}
	W^{\sigma,p}(\mathbb{R}^d) := \left\{ f \in \mathscr{S}' (\mathbb{R}^d) :
		\| f \|_{W^{\sigma,p}} = \| \langle \nabla \rangle^\sigma f \|_{L^p} < \infty \right\}
\end{align*}
for $\sigma \geq 0$ and $1\leq p \leq \infty$. We set $H^\sigma:=W^{\sigma,2}$. 
For $1\leq p,q \leq \infty$ and $\sigma \in \mathbb{R}$, we define inhomogeneous Besov norm by
\begin{align*}
	\| f \|_{B_{p,q}^{\sigma}} 
	:=\| \chi_{\leq 1} f \|_{L^p} + \left\|  \{ 2^{j\sigma} \|P_j f \|_{L^p}  \}_{j=1}^{\infty} \right\|_{l^{q}}
\end{align*}
and inhomogeneous Besov space by
\begin{align*}
	B_{p,q}^{\sigma}(\R^d) := \{ f \in \mathscr{S}' : \| f \|_{B_{p,q}^{\sigma}} < \infty \}.
\end{align*}
For a function space $X$, we denote its homogeneous space by $\dot{X}$. For example,  $\dot{X}=L_t^q \dot{B}_{r,p}^{s}$ if $X=L_t^qB_{r,p}^{s}$.

For Banach spaces $X,Y$, we set 
\begin{align*}
	X |Y & := \left\{ f : \| f \|_{X|Y} :=  \| \chi_{\leq \tau^{-1/2} }f\| _{X} + \| \chi_{>\tau^{-1/2} }f \|_{Y} < \infty \right\}  \text{ for } \tau>0,
\end{align*}
and $[X,Y]_{\theta}$ is an complex interpolation space between $X$ and $Y$ of order $\theta \in (0,1)$. We set $\| f \|_{aX} := a^{-1}\|f\|_{X} $. 

We define 
\begin{align*}
	\mathcal{L}&:=\{\alpha L^q (I; L^r(\mathbb{R}^d)): \alpha>0, 1\leq q \leq \infty,1 < r < \infty \}
	\\
	\mathcal{B}&:=\{\alpha L^q (I; \dot{B}_{r,p}^{s}(\mathbb{R}^d)): \alpha,s>0, 1\leq p,q \leq \infty,1 < r < \infty \}
\end{align*}
for an interval $I$. For $Z=\alpha L^q (I; \dot{B}_{r,p}^{s}(\mathbb{R}^d)) \in \mathcal{B}$, we define 
\begin{align*}
	\sigma(Z):=s
\end{align*}
For $X \in \mathcal{L}$ and $Z \in \mathcal{B}$, we set
\begin{align*}
	X^{p-1}Z := \alpha L^{q} (I; \dot{B}_{r,p}^{s}(\mathbb{R}^d))
\end{align*}
where $X = \alpha_0 L^{q_0} (I; L^{q_0}(\mathbb{R}^d))$, $Z=\alpha_1 L^{q_1} (I; \dot{B}_{r_1,p_1}^{s_1}(\mathbb{R}^d))$, and
\begin{align*}
	\frac{1}{q} = \frac{p-1}{q_0} + \frac{1}{q_1},
	\quad
	\frac{1}{r} = \frac{p-1}{r_0} + \frac{1}{r_1},
	\quad 
	p=p_1, \quad  s=s_1, \text{ and } \alpha = \alpha_0^{p-1} \alpha_1.
\end{align*}
This roughly means that $\|  |u|^{p-1}u\|_{X^{p-1}Z} \cleq \|u\|_{X}^{p-1}\|u\|_{Z}$ holds by the fractional Leibnitz rule.

For $d\geq 3$, we use $\theta \in (0,1)$ which is the internal division ratio of $p$ between $1$ and $p_1$, i.e. 
\begin{align*}
	p=1-\theta + p_1 \theta,
\end{align*} 
where $p_1:=1+4/(d-2)$. 

For exponents $(q,r)$ and $(\tilde{q},\tilde{r})$ satisfying $q,r,\tilde{q},\tilde{r} \in [ 2,\infty]$, we define exponents $\alpha$ and $\gamma$ by 
\begin{align*}
	\alpha(q,r) &:= -\frac{d}{2}\left(\frac{1}{2}-\frac{1}{r}\right)+\frac{1}{q},
	\\
	\gamma(q,r)&:= \max \left\{ d\left(\frac{1}{2}-\frac{1}{r}\right)-\frac{1}{q}, \frac{d+1}{2}\left(\frac{1}{2}-\frac{1}{r}\right) \right\}
\end{align*}
and $\delta((q,r),(\tilde{q},\tilde{r}))$ in Table \ref{tab1} below.
\subsection{Symbols}

The solution propagator of \eqref{DW} is given by  
\begin{align*}
	\begin{pmatrix}
	\phi_{\tau}(t) \\ \partial_t \phi_{\tau} (t)
	\end{pmatrix}
	=  \mathcal{A}_{\tau}(t) 
	\begin{pmatrix}
	f_{\tau} \\  g_{\tau}
	\end{pmatrix}
\end{align*}
where 
\begin{align*}
	\mathcal{A}_{\tau}(t)& :=
	\begin{pmatrix}
	\frac{1}{\tau} \mathcal{D}_{\tau}(t) + \partial_t \mathcal{D}_{\tau}(t) & \mathcal{D}_{\tau}(t)
	\\
	\frac{1}{\tau} \partial_t \mathcal{D}_{\tau}(t) + \partial_t^2 \mathcal{D}_{\tau}(t) & \partial_t \mathcal{D}_{\tau}(t)
	\end{pmatrix},
	\\
	\widehat{\cD_{\tau}}(t)&:= \frac{1}{ \lambda_{\tau}^{-} -  \lambda_{\tau}^{+}} ( -e^{t \lambda_{\tau}^{+}}+e^{t \lambda_{\tau}^{-}}) \cF,
\end{align*}
and
\begin{align*}
	\lambda_{\tau}^{\pm} = \lambda_{\tau}^{\pm} (\xi) := \frac{-1\pm \sqrt{1-4\tau |\xi|^2}}{2\tau}.
\end{align*}
See e.g. \cite{Mat76}. Therefore, by the Duhamel formula, the solution of the nonlinear equation \eqref{NLDW} is given by
\begin{align*}
	\begin{pmatrix}
	u_{\tau}(t) \\ \partial_t u_{\tau} (t)
	\end{pmatrix}
	= \mathcal{A}_{\tau}(t) 
	\begin{pmatrix}
	f_{\tau} \\  g_{\tau}
	\end{pmatrix}
	+\int_{0}^{t} \frac{1}{\tau}\mathcal{A}_{\tau}(t-s) 
	\begin{pmatrix}
	0 \\  \mathcal{N}(u_{\tau}(s))
	\end{pmatrix}
	ds,
\end{align*}
where we set $ \mathcal{N}(u_{\tau}(s))=\mu |u_{\tau}(s)|^{p-1}u_{\tau}(s)$.

For fixed $\xi \in \mathbb{R}^d$, we have, as $\tau \to 0$, 
\begin{align*}
	\lambda_{\tau}^{+} 
	&= \frac{-1+ \sqrt{1-4\tau |\xi|^2}}{2\tau}=\frac{2|\xi|^2}{-1-\sqrt{1-4\tau|\xi|^2}}\to -|\xi|^2,
	\\
	\lambda_{\tau}^{-} 
	&= \frac{-1- \sqrt{1-4\tau |\xi|^2}}{2\tau} \to -\infty,
\end{align*}
and
\begin{align*}
	\frac{1}{\lambda_{\tau}^{-} - \lambda_{\tau}^{+}}
	= -\frac{\tau}{ \sqrt{1-4\tau |\xi|^2}}  \to 0
\end{align*}
\begin{align*}
	\frac{\lambda_{\tau}^{\pm}}{ \lambda_{\tau}^{-} -  \lambda_{\tau}^{+}}
	=- \frac{-1\pm \sqrt{1-4\tau |\xi|^2}}{2\sqrt{1-4\tau |\xi|^2}}
	\to 
	\begin{cases}
	0 & \text{ if } \lambda_{\tau}^{+}
	\\
	1 & \text{ if } \lambda_{\tau}^{-}
	\end{cases}
\end{align*}
\begin{align*}
	\frac{\lambda_{\tau}^{-}\lambda_{\tau}^{+}}{ \lambda_{\tau}^{-} -  \lambda_{\tau}^{+}}
	=-\frac{|\xi|^2}{\sqrt{1-4\tau |\xi|^2}} \to -|\xi|^2.
\end{align*}
Thus, roughly, we have
\begin{align*}
	&\frac{1}{\tau}  \cD_{\tau}(t) + \partial_t \cD_{\tau}(t)
	=  \frac{1}{ \lambda_{\tau}^{-} -  \lambda_{\tau}^{+}} 
	\l(\lambda_{\tau}^{-} e^{t \lambda_{\tau}^{+}}
	- \lambda_{\tau}^{+} e^{t \lambda_{\tau}^{-}} \r)
	\to e^{t\Delta},
	\\
	&\cD_{\tau}(t)= \frac{1}{ \lambda_{\tau}^{-} -  \lambda_{\tau}^{+}} ( -e^{t \lambda_{\tau}^{+}}+e^{t \lambda_{\tau}^{-}})
	\to 0,
\end{align*}
and
\begin{align*}
	\frac{1}{\tau}  \cD_{\tau}(t) = \frac{1}{ \tau(\lambda_{\tau}^{-} -  \lambda_{\tau}^{+})} ( -e^{t \lambda_{\tau}^{+}}+e^{t \lambda_{\tau}^{-}})
	\to e^{t\Delta}. 
\end{align*}

From the observation of the symbols, we find that the low frequency part of the propagator is like the heat propagator and the high part is like the wave propagator with exponential decay. Indeed, we have
\begin{align*}
	\cD_{\tau}(t) 
	=\tau e^{-\frac{t}{2\tau}} \cF^{-1} L_{\tau}(t,\xi) \cF,
\end{align*}
and
\begin{align*}
	L_{\tau}(t,\xi) = 
	\begin{cases}
	\frac{2}{\sqrt{1-4\tau|\xi|^2}} \sinh \l(t \frac{\sqrt{1-4\tau |\xi|^2}}{2\tau}\r) & \text{if }  |\xi| \leq \frac{1}{2\sqrt{\tau}}, 
	\\
	\frac{2}{\sqrt{4\tau|\xi|^2-1}} \sin \l(t \frac{\sqrt{4\tau |\xi|^2-1}}{2\tau}\r) & \text{if } |\xi| > \frac{1}{2\sqrt{\tau}}.
	\end{cases}
\end{align*}
Therefore, we need to calculate low and high parts respectively. 

\subsection{Lemmas}

It is easily seen that $\phi_{\tau}(t,x) = \phi_1(\tau^{-1}t,\tau^{-1/2}x)$, where $\phi_1$ is a solution of $\partial_t^2 \phi_1 + \partial_t \phi_1 -\Delta \phi_1 =0$. Therefore, by this scaling and the previous result \cite{Inu19, InWa19}, we have the following Strichartz eatimates. 

\begin{lemma}[Homogeneous estimate for low frequency]
Let $s\in \mathbb{R}$, $q \in [2,\infty]$, $r\in [2,\infty]$. 
Assume that $(q,r)$ satisfies
\begin{align*}
	\frac{d}{2} \left(\frac{1}{2} - \frac{1}{r} \right) \geq \frac{1}{q}.
\end{align*}
Then, we have the following.
\begin{align*}
	\left\| \frac{1}{\tau}\cD_{\tau}(t) \chi_{\leq \tau^{-1/2}} f \right\|_{L_t^q \dot{B}^{s}_{r,2}(I)} 
	\cleq \tau^{ \alpha(q,r) } \| \chi_{\leq \tau^{-1/2}} f\|_{\dot{B}^{s}_{2,2}},
\end{align*}
and 
\begin{align*}
	\norm{\partial_t \cD_{\tau}(t) \chi_{\leq \tau^{-1/2}}  f}_{L_t^q \dot{B}^{s}_{r,2}(I)} 
	\cleq \tau^{\alpha(q,r)} \norm{ \chi_{\leq \tau^{-1/2}} f}_{\dot{B}^{s}_{2,2}},
\end{align*}
where the implicit constants are independent of $\tau$. Moreover, we also have
\begin{align*}
	\left\|\left(\frac{1}{\tau} \partial_t \mathcal{D}_{\tau}(t) + \partial_t^2 \mathcal{D}_{\tau}(t)\right) \chi_{\leq \tau^{-1/2}} f \right\|_{L_t^{\infty}L_x^2} 
	\lesssim \| |\nabla|^2 \chi_{\leq \tau^{-1/2}}f\|_{L^2}
	\lesssim \tau^{-\frac{1}{2}}\|f\|_{H^1}.
\end{align*}
\end{lemma}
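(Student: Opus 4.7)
The strategy is to reduce to the $\tau = 1$ case, where Strichartz estimates are available from \cite{Inu19, InWa19}, via the parabolic scaling $\phi_\tau(t,x) = \phi_1(t/\tau, x/\sqrt{\tau})$ already noted before the lemma. Setting $h(y) := g(\sqrt{\tau}\, y)$, this identity applied to $\cD_\tau$ (viewed as the fundamental solution starting from data $(0, g)$) gives the operator-level scaling
\[
\frac{1}{\tau}\cD_\tau(t) g(x) = \bigl(\cD_1(t/\tau)\, h\bigr)(x/\sqrt{\tau}),
\]
and the analogous identity for $\partial_t \cD_\tau$. A short Fourier computation moreover shows that the low-frequency cutoff is preserved exactly: $(\chi_{\leq \tau^{-1/2}}(\nabla) g)(x) = (\chi_{\leq 1}(\nabla) h)(x/\sqrt{\tau})$, since $|\xi| \leq \tau^{-1/2}$ becomes $|\eta| = |\sqrt{\tau}\,\xi| \leq 1$ in the rescaled frequency.

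I would then track the dilation exponents. The spatial rule $\|H(\cdot/\sqrt{\tau})\|_{\dot B^s_{r,2}} = \tau^{d/(2r) - s/2}\|H\|_{\dot B^s_{r,2}}$, the time substitution $t = \tau \sigma$ (contributing $\tau^{1/q}$ to the $L^q_t$ norm), and the data scaling $\|h\|_{\dot B^s_{2,2}} = \tau^{-d/4+s/2}\|g\|_{\dot B^s_{2,2}}$ combine to
\[
\tau^{d/(2r) - s/2 + 1/q} \cdot \tau^{-d/4 + s/2} = \tau^{-\frac{d}{2}\left(\frac{1}{2}-\frac{1}{r}\right) + \frac{1}{q}} = \tau^{\alpha(q,r)}.
\]
Inserting the $\tau = 1$ low-frequency heat-admissible Strichartz bound
\[
\|\cD_1(\sigma) \chi_{\leq 1} h\|_{L^q_\sigma \dot B^s_{r,2}} \lesssim \|\chi_{\leq 1} h\|_{\dot B^s_{2,2}}
\]
from \cite{Inu19, InWa19}, which is valid precisely under the admissibility condition $\frac{d}{2}(\frac{1}{2}-\frac{1}{r}) \geq \frac{1}{q}$, then yields the first estimate. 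The estimate for $\partial_t \cD_\tau$ is obtained by the identical computation, since the corresponding $\tau=1$ reduction $\partial_t \cD_\tau(t) g(x) = (\partial_\sigma \cD_1)(t/\tau)[h](x/\sqrt{\tau})$ has exactly the same scaling exponents.

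For the last estimate, I would exploit the operator identity
\[
\frac{1}{\tau}\partial_t \cD_\tau + \partial_t^2 \cD_\tau = \frac{1}{\tau}\Delta\, \cD_\tau,
\]
which is immediate from the homogeneous equation $\tau \partial_t^2 \cD_\tau + \partial_t \cD_\tau - \Delta \cD_\tau = 0$ satisfied by $\cD_\tau$. Commuting $\Delta$ past the Fourier cutoff and applying the already-proven bound with $(q,r,s) = (\infty, 2, 0)$ (for which $\alpha(\infty,2) = 0$) dominates the $L^\infty_t L^2_x$ norm by $\||\nabla|^2 \chi_{\leq \tau^{-1/2}} f\|_{L^2}$, and the pointwise bound $|\xi|^2 \chi_{\leq \tau^{-1/2}}(\xi) \lesssim \tau^{-1/2} |\xi|$ on the cutoff support then yields $\tau^{-1/2}\|f\|_{H^1}$ by Plancherel.

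The main obstacle I anticipate is the careful bookkeeping of the three competing scaling contributions (in the spatial norm, the temporal norm, and the data), together with verifying that the frequency cutoff transforms cleanly to the unit cutoff at $\tau = 1$. Once these are in place, no further analytic input is needed beyond the off-the-shelf $\tau = 1$ Strichartz theory of \cite{Inu19, InWa19}.
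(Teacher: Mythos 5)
Your proposal is correct and follows essentially the same route as the paper, which justifies this lemma in one line via the parabolic scaling $\phi_{\tau}(t,x)=\phi_1(\tau^{-1}t,\tau^{-1/2}x)$ combined with the $\tau=1$ Strichartz estimates of \cite{Inu19, InWa19}; your exponent bookkeeping ($\tau^{1/q+d/(2r)-s/2}\cdot\tau^{-d/4+s/2}=\tau^{\alpha(q,r)}$), the compatibility of the frequency cutoff with the rescaling, and the identity $\frac{1}{\tau}\partial_t\cD_\tau+\partial_t^2\cD_\tau=\frac{1}{\tau}\Delta\cD_\tau$ for the last estimate are all accurate.
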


\begin{lemma}[Homogeneous estimate for high frequency]
\label{lem2.2.0}
Let $s\in \mathbb{R}$, $q \in [2,\infty]$, and $r\in [2,\infty]$. 
Then, we have the following.
\begin{align*}
	\norm{ \frac{1}{\tau} \cD_{\tau}(t) \chi_{>\tau^{-1/2} }  f}_{L_t^q \dot{B}^{s}_{r,2}(I)} 
	&\cleq \tau^{ \alpha(q,r) + \frac{\gamma(q,r)}{2} -\frac{1}{2} } \norm{ |\nabla|^{\gamma(q,r)-1} \chi_{> \tau^{-1/2}} f}_{\dot{B}^{s}_{2,2}},
\end{align*}
and
\begin{align*}
	\norm{\partial_t \cD_{\tau}(t) \chi_{> \tau^{-1/2}}  f}_{L_t^q \dot{B}^{s}_{r,2}(I)} 
	\cleq \tau^{ \alpha(q,r)  + \frac{\gamma(q,r)}{2}} \norm{ |\nabla|^{\gamma(q,r)}  \chi_{>\tau^{-1/2}} f}_{\dot{B}^{s}_{2,2}},
\end{align*}
where the implicit constants are independent of $\tau$. Moreover, we also have
\begin{align*}
	\left\|\left(\frac{1}{\tau} \partial_t \mathcal{D}_{\tau}(t) + \partial_t^2 \mathcal{D}_{\tau}(t)\right) \chi_{>\tau^{-1/2}} f \right\|_{L_t^{\infty}L_x^2} 
	\lesssim \tau^{-\frac{1}{2}}\|f\|_{H^1}.
\end{align*}

\end{lemma}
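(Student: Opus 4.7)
The plan is to reduce both inequalities to the $\tau = 1$ case via the self-similar scaling $\phi_\tau(t,x) = \phi_1(t/\tau, x/\sqrt{\tau})$ recorded at the start of this subsection, and then invoke the Strichartz estimates for the unit-delay damped wave equation proved in \cite{Inu19, InWa19}. The heuristic reason the lemma should hold is that on the high-frequency region $|\xi| > 1/(2\sqrt{\tau})$ the explicit representation of $\cD_\tau(t)$ exhibits it as an exponentially damped wave propagator losing one derivative (due to the $|\xi|^{-1}$ amplitude in $L_\tau(t,\xi)$), so that wave-type Strichartz estimates apply without any admissibility restriction thanks to the factor $e^{-t/(2\tau)}$; this explains the appearance of $\gamma(q,r)$ and the one-derivative gap between the two inequalities.

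First I would fix the scaling dictionary. Setting $\tilde t = t/\tau$ and $\tilde y = x/\sqrt{\tau}$, the Fourier variable transforms as $\eta = \sqrt{\tau}\xi$, so the high-frequency cutoff $\chi_{>\tau^{-1/2}}(\xi)$ becomes exactly $\chi_{>1}(\eta)$ in the rescaled frame. Writing $g_1(\tilde y) := \tau g_\tau(\sqrt{\tau}\tilde y)$, the propagator satisfies
\begin{align*}
(\cD_\tau(t) g_\tau)(x) = (\cD_1(\tilde t) g_1)(\tilde y), \qquad (\partial_t \cD_\tau(t) g_\tau)(x) = \tau^{-1}(\partial_{\tilde t}\cD_1(\tilde t) g_1)(\tilde y).
\end{align*}
I will then apply the $\tau = 1$ high-frequency inequalities
\begin{align*}
\|\cD_1(\tilde t)\chi_{>1} h\|_{L^q_{\tilde t}\dot{B}^s_{r,2}} &\lesssim \||\nabla|^{\gamma(q,r)-1}\chi_{>1} h\|_{\dot{B}^s_{2,2}}, \\
\|\partial_{\tilde t}\cD_1(\tilde t)\chi_{>1} h\|_{L^q_{\tilde t}\dot{B}^s_{r,2}} &\lesssim \||\nabla|^{\gamma(q,r)}\chi_{>1} h\|_{\dot{B}^s_{2,2}},
\end{align*}
which follow from standard wave Strichartz (Keel--Tao) combined with the factor $e^{-\tilde t/2}$ that absorbs the time integrability beyond the admissible range, as in the cited works.

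Second, I would track the $\tau$-powers. Under $x = \sqrt{\tau}\tilde y$, the homogeneous Besov norm obeys $\|H(\cdot/\sqrt{\tau})\|_{\dot{B}^s_{r,2}} = \tau^{-s/2 + d/(2r)}\|H\|_{\dot{B}^s_{r,2}}$ (the dyadic blocks $P_j$ on $x$ correspond to $P_{j+k}$ on $\tilde y$ with $k = -\tfrac{1}{2}\log_2\tau$, and the shifts in $2^{js}$ and $\|P_j\|_{L^r}$ compose to the announced power). Combined with the time-scaling factor $\tau^{1/q}$ from $t = \tau\tilde t$, the left-hand side of the lemma (including the prefactor $\tau^{-1}$) picks up $\tau^{-1 - s/2 + d/(2r) + 1/q}$, while rescaling $\||\nabla|^{\gamma-1} g_1\|_{\dot{B}^s_{2,2}}$ back to $g_\tau$ contributes $\tau^{1 + s/2 - d/4 + (\gamma-1)/2}$. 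The $s$-dependence cancels, and using $\alpha(q,r) = -d/4 + d/(2r) + 1/q$ the sum becomes $\alpha + \gamma/2 - 1/2$, as required. The $\partial_t\cD_\tau$ estimate is the same bookkeeping with an extra $\tau^{-1}$ from $\partial_t = \tau^{-1}\partial_{\tilde t}$, which is consumed by the increase from $|\nabla|^{\gamma-1}$ to $|\nabla|^{\gamma}$ on the right, yielding the exponent $\alpha + \gamma/2$.

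Finally, the pointwise $L^2$-estimate on $(\tau^{-1}\partial_t + \partial_t^2)\cD_\tau$ requires no Strichartz input. Vieta applied to the characteristic polynomial $\tau\lambda^2 + \lambda + |\xi|^2 = 0$ gives $\lambda_\tau^+\lambda_\tau^- = |\xi|^2/\tau$, and a short Fourier computation then produces the operator identity $(\tau^{-1}\partial_t + \partial_t^2)\cD_\tau = \tau^{-1}\Delta\,\cD_\tau$. In the high-frequency region the symbol satisfies $|\widehat{\cD_\tau(t)}(\xi)| \lesssim \sqrt{\tau}\,|\xi|^{-1}\,e^{-t/(2\tau)}$, since $\sqrt{4\tau|\xi|^2 - 1} \gtrsim \sqrt{\tau}|\xi|$ when $|\xi| > \tau^{-1/2}$. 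Combining these,
\begin{align*}
\|(\tau^{-1}\partial_t + \partial_t^2)\cD_\tau \chi_{>\tau^{-1/2}} f\|_{L^2_x} \lesssim \tau^{-1/2}\||\xi|\chi_{>\tau^{-1/2}}\hat f\|_{L^2} \lesssim \tau^{-1/2}\|f\|_{H^1}
\end{align*}
uniformly in $t$. The only delicate point in the whole argument is the bookkeeping of $\tau$-powers and confirming that the cutoff and the Littlewood--Paley projections rescale consistently under the change of variables; once this is set up, the reduction to \cite{Inu19, InWa19} is routine.
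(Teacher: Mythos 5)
Your proposal is correct and follows essentially the same route as the paper, which likewise reduces to the $\tau=1$ propagator via the scaling $\phi_\tau(t,x)=\phi_1(\tau^{-1}t,\tau^{-1/2}x)$ and then cites the high-frequency Strichartz estimates of \cite{Inu19,InWa19} (with the $d=1$ case supplied in Appendix~\ref{appA.2}); your power bookkeeping yielding $\tau^{\alpha+\gamma/2-1/2}$ and $\tau^{\alpha+\gamma/2}$ checks out. The Vieta identity $(\tau^{-1}\partial_t+\partial_t^2)\cD_\tau=\tau^{-1}\Delta\cD_\tau$ together with the bound $|\widehat{\cD_\tau}(t)(\xi)|\lesssim\sqrt{\tau}\,|\xi|^{-1}$ on $\supp\chi_{>\tau^{-1/2}}$ is a clean and valid justification of the third estimate, which the paper leaves implicit.
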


Moreover, we also have the inhomogeneous Strichartz estimate.

\begin{lemma}[Inhomogeneous estimate for low frequency]
Let $s\in \mathbb{R}$, $q \in [2,\infty]$, and $r\in [2,\infty]$. Assume that  $(q,r)$ and $(\tilde{q},\tilde{r})$ satisfy 
\begin{align}
\label{eq2.1.0}
	\frac{d}{2}\l( \frac{1}{2} - \frac{1}{r}\r) \geq \frac{1}{q} \text{ and } \frac{d}{2}\l( \frac{1}{2} - \frac{1}{\tilde{r}}\r)
	\geq  \frac{1}{\tilde{q}},
\end{align}
and additionally assume $\tilde{q}' < q$ if the both equalities hold.
Then we have
\begin{align*}
	\norm{ \int_0^t \frac{1}{\tau} \cD_{\tau}(t-s) \chi_{\leq \tau^{-1/2}} F(s) ds }_{L_t^q \dot{B}^{s}_{r,2}(I)} 
	\cleq
	\tau^{\alpha(q,r) + \alpha(\tilde{q},\tilde{r}) } 
	 \norm{ \chi_{\leq \tau^{-1/2}} F}_{L_t^{\tilde{q}'} \dot{B}^{s}_{\tilde{r}',2}},
\end{align*}
where the implicit constant is independent of $\tau$. Moreover, we also have
\begin{align*}
	\left\| \int_0^t \frac{1}{\tau} \partial_t \cD_{\tau}(t-s) \chi_{\leq \tau^{-1/2}} F(s) ds \right\|_{L_t^{\infty}L_x^2} 
	\lesssim \tau^{-1+\alpha(\tilde{q},\tilde{r}) +\frac{s}{2}}\| \chi_{\leq \tau^{-1/2}}F\|_{L_t^{\tilde{q}'}W_x^{s,\tilde{r}'}}
\end{align*}
for $s \in [0,1]$.
\end{lemma}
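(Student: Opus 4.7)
The plan is to derive the inhomogeneous estimate by reducing to the case $\tau=1$ via scaling, exploiting that on the low-frequency region the damped-wave propagator is a linear combination of a heat-type semigroup and an exponentially decaying semigroup, and applying Christ--Kiselev to pass from the untruncated integral to the retarded one. The strict inequality $\tilde q'<q$ in the doubly-equality case is precisely the Christ--Kiselev hypothesis, which explains its appearance.

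First I would use the scaling $\phi_\tau(t,x)=\phi_1(t/\tau,\,x/\tau^{1/2})$ recalled just before Lemma 2.1. Under this substitution the cutoff $\chi_{\leq\tau^{-1/2}}$ becomes a fixed low-frequency cutoff (say $\chi_{\leq 1/2}$), and both $L_t^q\dot B^{s}_{r,2}$ and $L_t^{\tilde q'}\dot B^{s}_{\tilde r',2}$ transform with exactly the $\tau$-powers appearing on the two sides of the claim; a direct homogeneity count from the definition of $\alpha(q,r)$ matches $\tau^{\alpha(q,r)+\alpha(\tilde q,\tilde r)}$. It therefore suffices to prove the estimate at $\tau=1$ with a fixed low-frequency truncation.

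For $\tau=1$ I would write
\[
	\cD_1(t)\chi_{\leq 1/2}=\frac{1}{\lambda_1^{-}-\lambda_1^{+}}\bigl(-e^{t\lambda_1^{+}(\nabla)}+e^{t\lambda_1^{-}(\nabla)}\bigr)\chi_{\leq 1/2},
\]
and observe that on $\{|\xi|\leq 1/2\}$ the prefactor $1/(\lambda_1^{-}-\lambda_1^{+})$ is a smooth bounded Fourier multiplier. Since $\lambda_1^{-}(\xi)\leq -c$ uniformly there, the second term decays exponentially in $t$ and both its homogeneous and inhomogeneous Strichartz estimates are elementary via Minkowski and H\"older. The first term is governed by $\lambda_1^{+}(\xi)=-|\xi|^2+O(|\xi|^4)$; its symbol is pointwise comparable to the heat symbol $e^{-ct|\xi|^2}$ on the support of $\chi_{\leq 1/2}$, so its kernel enjoys Gaussian upper bounds, and the standard retarded parabolic inhomogeneous Strichartz estimate from \cite{Inu19, InWa19} applies. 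In the non-diagonal case this directly yields the $\int_0^t$ form; in the doubly-equality case one first obtains the untruncated integral by $TT^*$ duality from Lemma 2.1 and then applies Christ--Kiselev under the hypothesis $\tilde q'<q$.

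For the ``moreover'' part I would use
\[
	\frac{1}{\tau}\partial_t\cD_\tau(t)=\frac{1}{\tau(\lambda_\tau^{-}-\lambda_\tau^{+})}\bigl(-\lambda_\tau^{+}e^{t\lambda_\tau^{+}}+\lambda_\tau^{-}e^{t\lambda_\tau^{-}}\bigr).
\]
On $\{|\xi|\leq\tau^{-1/2}\}$ the multipliers $\lambda_\tau^{+}/\tau$ and $\lambda_\tau^{-}/\tau$ are of order $|\xi|^2$ and $\tau^{-1}$ respectively, so the $e^{t\lambda_\tau^{-}}$ piece decays exponentially and is trivially $L_t^\infty L_x^2$-bounded, while the $e^{t\lambda_\tau^{+}}$ piece is handled via Young/H\"older in time against a heat-type kernel, using the $s$-derivative gain to absorb the $W^{s,\tilde r'}$ norm of $F$. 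Interpolating in $s\in[0,1]$ produces the stated factor $\tau^{-1+\alpha(\tilde q,\tilde r)+s/2}$. The main technical subtlety throughout is keeping all implicit constants $\tau$-independent, which is automatic once we carry out the substantive estimates at $\tau=1$ after the scaling reduction.
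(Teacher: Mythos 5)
Your overall strategy --- rescale to $\tau=1$ so that the cutoff becomes a fixed low-frequency truncation, and then invoke the heat-type Strichartz estimates of \cite{Inu19, InWa19} for the rescaled propagator --- is exactly what the paper does for the bulk of this lemma, and your homogeneity count for the factor $\tau^{\alpha(q,r)+\alpha(\tilde q,\tilde r)}$ is the right bookkeeping. The problem is your treatment of the doubly-equality case. The $TT^*$ plus Christ--Kiselev route is borrowed from unitary groups and does not transfer to a dissipative semigroup: the identity $U(t)U(s)^*=U(t-s)$ fails here (the symbols $\lambda_\tau^\pm$ are real, so $\widehat{\cD_\tau}(t)\,\overline{\widehat{\cD_\tau}(s)}$ produces cross terms like $e^{t\lambda^+ + s\lambda^-}$ and is not $\widehat{\cD_\tau}(t-s)$ up to constants), and the would-be untruncated operator $\int_{\R}\frac1\tau\cD_\tau(t-s)F(s)\,ds$ is not even defined, since $e^{(t-s)\lambda_\tau^\pm}$ grows exponentially for $t<s$. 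So there is no bounded full-line operator to which Christ--Kiselev could be applied, and the hypothesis $\tilde q'<q$ is not playing the role you assign to it. In the paper, the doubly-sharp case with $1<\tilde q'<q<\infty$ is simply inherited from \cite{Inu19,InWa19} (where it follows from the $L^{\tilde r'}\to L^r$ decay of the kernel and Hardy--Littlewood--Sobolev in time, which is where $1<\tilde q'<q<\infty$ comes from), and the genuinely new content of this lemma --- the endpoints $q=\infty$ or $\tilde q=\infty$ --- is proved in Appendix A by a direct argument: Littlewood--Paley decomposition, the pointwise bound $|e^{-\frac{t-s}{2}}L(t-s,\xi)|\lesssim e^{-c(t-s)2^{2j}}$ on each block, Young's inequality in time, summation via Minkowski and the Sobolev embedding $\dot B^{-2/\tilde q}_{2,2}\supset L^{\tilde r'}$, and duality for the $L^1L^2$ source. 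You would need to replace the Christ--Kiselev step by an argument of this type.

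A secondary point: after rescaling, the prefactor $1/(\lambda_1^{-}-\lambda_1^{+})=-1/\sqrt{1-4|\xi|^2}$ is \emph{not} a bounded multiplier on the support of the scaled cutoff --- it blows up at the degenerate frequency $|\xi|=1/2$, which lies inside $\supp\chi_{\leq 1}$. Splitting $\cD_1$ into two separate exponentials each multiplied by this prefactor is therefore not legitimate across the whole low-frequency region; only the divided-difference combination $(\,-e^{t\lambda^+}+e^{t\lambda^-})/(\lambda^--\lambda^+)$ is regular there. This is fixable (the paper's appendix treats the dyadic blocks near $|\xi|=1/2$ separately using the explicit $\sinh$ and $\sin$ representations, and the main text sidesteps it by cutting at $(8\tau)^{-1/2}$), but as written your decomposition has an unjustified step at exactly the frequencies where the two branches of $\lambda_1^{\pm}$ collide.
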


\begin{remark}
In \cite{Inu19,InWa19}, we assume that $1<\tilde{q}' < q<\infty$ if the both equalities \eqref{eq2.1.0} hold. In the above lemma, we may take $q=\infty$ or $\tilde{q}=\infty$. See Appendix \ref{appA} for the proof.  
\end{remark}

\begin{lemma}[Inhomogeneous estimate for high frequency]
\label{lem2.4.0}
Let $s\in \mathbb{R}$, $q, \tilde{q} \in [2,\infty]$, and $r, \tilde{r}\in [2,\infty]$. 
Then, we have the following.
\begin{align*}
	&\norm{ \int_0^t \frac{1}{\tau}\cD_{\tau}(t-s) \chi_{> \tau^{-1/2} } F(s) ds}_{L_t^q \dot{B}^{s}_{r,2}(I)} 
	\\
	&\qquad \cleq
	\tau^{ \alpha(q,r) + \frac{\gamma(q,r)}{2} + \alpha(\tilde{q},\tilde{r}) + \frac{\gamma(\tilde{q},\tilde{r})}{2} +\frac{\delta}{2}-\frac{1}{2}}
	 \norm{ |\nabla|^{\gamma(q,r)+\gamma(\tilde{q},\tilde{r})+\delta-1} \chi_{> \tau^{-1/2} }F}_{L_t^{\tilde{q}'} \dot{B}^{s}_{\tilde{r}',2}(I)}
\end{align*}
where the implicit constant is independent of $\tau$ and we set $\delta:=\delta((q,r),(\tilde{q},\tilde{r}))$, which is defined in Table \ref{tab1} below. 
Moreover, we also have
\begin{align*}
	\left\| \int_0^t \frac{1}{\tau} \partial_t \cD_{\tau}(t-s) \chi_{> \tau^{-1/2}} F(s) ds \right\|_{L_t^{\infty}L_x^2} 
	\lesssim \tau^{-1+\alpha(\tilde{q},\tilde{r})+\frac{\gamma(\tilde{q},\tilde{r})}{2}}\||\nabla|^{\gamma(\tilde{q},\tilde{r})} \chi_{> \tau^{-1/2} }F\|_{L_t^{\tilde{q}'}W_x^{s,\tilde{r}'}}.
\end{align*}

\begin{table}[htb]
\begingroup
\renewcommand{\arraystretch}{1.6}
\begin{tabular}{|c|c|c|} 
	\hline
	$\delta((q,r),(\tilde{q},\tilde{r}))$
		&  $\frac{1}{\tilde{q}} \l( \frac{1}{2}- \frac{1}{r}\r) < \frac{1}{q} \l( \frac{1}{2} - \frac{1}{\tilde{r}} \r) $
		&  $\frac{1}{\tilde{q}} \l( \frac{1}{2}- \frac{1}{r}\r) > \frac{1}{q} \l( \frac{1}{2} - \frac{1}{\tilde{r}} \r)$ 
	\\[4pt]
	\hline \hline
	$\frac{d-1}{2}\l( \frac{1}{2}- \frac{1}{r}\r) \geq \frac{1}{q}$ 
	& $0$
	& $0$
	\\
	$\frac{d-1}{2}\l( \frac{1}{2}- \frac{1}{\tilde{r}}\r) \geq \frac{1}{\tilde{q}}$
	&  
	& 
	\\[5pt]
	\hline
	$\frac{d-1}{2}\l( \frac{1}{2}- \frac{1}{r}\r) \geq \frac{1}{q}$ 
	& $\times$
	& $\frac{\tilde{q}}{q}  \l\{ \frac{1}{\tilde{q}}-\frac{d-1}{2} \l( \frac{1}{2} - \frac{1}{\tilde{r}}\r) \r\}$
	\\
	$\frac{d-1}{2}\l( \frac{1}{2}- \frac{1}{\tilde{r}}\r) < \frac{1}{\tilde{q}}$
	&  
	& 
	\\[5pt]
	\hline
	$\frac{d-1}{2}\l( \frac{1}{2}- \frac{1}{r}\r) < \frac{1}{q}$ 
	& $\frac{q}{\tilde{q}}  \l\{ \frac{1}{q}-\frac{d-1}{2} \l( \frac{1}{2} - \frac{1}{r}\r) \r\}$
	& $\times$
	\\
	$\frac{d-1}{2}\l( \frac{1}{2}- \frac{1}{\tilde{r}}\r) \geq \frac{1}{\tilde{q}}$
	&  
	& 
	\\[5pt]
	\hline
	$\frac{d-1}{2}\l( \frac{1}{2}- \frac{1}{r}\r) < \frac{1}{q}$ 
	& $\frac{1}{\tilde{q}} \frac{d-1}{2} \l\{ \tilde{q}\l( \frac{1}{2}- \frac{1}{\tilde{r}}\r) - q\l( \frac{1}{2} - \frac{1}{r}\r) \r\}$
	& $\frac{1}{q} \frac{d-1}{2} \l\{ q\l( \frac{1}{2}- \frac{1}{r}\r) - \tilde{q}\l( \frac{1}{2} - \frac{1}{\tilde{r}}\r) \r\}$
	\\
	$\frac{d-1}{2}\l( \frac{1}{2}- \frac{1}{\tilde{r}}\r)< \frac{1}{\tilde{q}}$
	&
	&
	\\
	\hline
\end{tabular}
\caption{The value of $\delta$. ($\times$ means that the case does not occur.)} \label{tab1}
\endgroup
\end{table} 
\end{lemma}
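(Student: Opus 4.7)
The plan is to reduce to the unit case $\tau = 1$ by the parabolic–hyperbolic scaling already noted in the preamble, and then invoke the inhomogeneous high-frequency Strichartz estimates for the damped wave equation $\partial_t^2 \phi - \Delta \phi + \partial_t \phi = 0$ that were established in \cite{Inu19,InWa19} (as extended in Appendix \ref{appA} to allow $q = \infty$ or $\tilde q = \infty$). Since $\phi_\tau(t,x) = \phi_1(\tau^{-1}t, \tau^{-1/2}x)$, a direct computation of Fourier symbols shows that for $F$ supported on $|\xi| > \tau^{-1/2}$,
\begin{align*}
	\frac{1}{\tau}\int_0^t \cD_\tau(t-s)F(s,\cdot)\,ds
	= \int_0^{t/\tau} \cD_1\!\l(\tfrac{t}{\tau}-s'\r) \chi_{>1}\widetilde F(s',\cdot)\,ds',
\end{align*}
evaluated at $\tau^{-1/2}x$, where $\widetilde F(s',y) = F(\tau s',\tau^{1/2}y)$. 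The cut-off $\chi_{>\tau^{-1/2}}(\xi)$ transforms into $\chi_{>1}(\eta)$ in the new frequency variable $\eta = \tau^{1/2}\xi$.

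Next I would apply the unit-scale high-frequency inhomogeneous Strichartz estimate, which has the form
\begin{align*}
	\l\| \int_0^{T} \cD_1(T-s')\chi_{>1}G(s',\cdot)\,ds' \r\|_{L^q_{T}\dot B^{s}_{r,2}}
	\cleq
	\l\| |\nabla|^{\gamma(q,r)+\gamma(\tilde q,\tilde r)+\delta -1}\chi_{>1}G \r\|_{L^{\tilde q'}_{s'}\dot B^{s}_{\tilde r',2}}
\end{align*}
with the same $\delta = \delta((q,r),(\tilde q,\tilde r))$ read off from Table~\ref{tab1}. At $\tau = 1$, the explicit formula for $\cD_1(t)$ on $|\xi|>1/2$ given in the Preliminaries shows that it equals a wave-type oscillatory multiplier of order $-1$ times the harmless damping factor $e^{-t/2}$, so this reduces to the Harmse–Oberlin/Foschi/Keel–Tao inhomogeneous wave Strichartz bound on high frequencies, with the usual $\delta$-loss on the non-sharp admissible range.

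The last step is bookkeeping of the $\tau$-powers. A change of variables $s = \tau s'$ contributes $\tau$ from $ds$, the spatial rescaling $x = \tau^{1/2}y$ yields $\tau^{d/(2r)}$ on the left and $\tau^{-d/(2\tilde r')}$ on the right of the $L^q\dot B^{s}_{r,2}$ norms, time rescaling gives $\tau^{1/q}$ and $\tau^{-1/\tilde q'}$, and the Besov derivatives produce $\tau^{-s/2}$ on each side (these cancel). The remaining $\gamma(q,r)-1$ and $\gamma(\tilde q,\tilde r) + \delta$ derivatives in the wave Strichartz inequality translate by $|\nabla|^{\gamma}f \mapsto \tau^{-\gamma/2}|\nabla|^\gamma f$, contributing $\tau^{\gamma(q,r)/2 - 1/2}$ and $\tau^{\gamma(\tilde q,\tilde r)/2 + \delta/2}$. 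Summing these exponents reproduces exactly
\begin{align*}
	\alpha(q,r)+\tfrac{\gamma(q,r)}{2}+\alpha(\tilde q,\tilde r)+\tfrac{\gamma(\tilde q,\tilde r)}{2}+\tfrac{\delta}{2}-\tfrac{1}{2},
\end{align*}
matching the claimed exponent. For the $\partial_t$ estimate one differentiates the symbol $\cD_\tau$: on $|\xi| > \tau^{-1/2}$, $\partial_t \cD_\tau$ is comparable in size to $\tau^{-1/2}|\nabla|\cD_\tau$ up to the harmless damping $e^{-t/(2\tau)}$, so the same argument with $(q,r)=(\infty,2)$ (giving $\alpha = \gamma = 0$) and one extra derivative / one extra factor of $\tau^{-1/2}$ yields the stated bound.

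The main obstacle I anticipate is neither the scaling nor the derivative accounting, which are mechanical, but rather the endpoint cases $q = \infty$ and $\tilde q = \infty$ that lie outside the usual Keel–Tao admissible range. Here the high-frequency restriction together with the exponential factor $e^{-t/(2\tau)}$ in the representation of $\cD_\tau$ provides the integrability missing in the pure wave case, which is precisely what Appendix~\ref{appA} is devoted to establishing; once that extension is in hand, the endpoint is absorbed into the same framework, and the $\delta$-loss table remains valid unchanged.
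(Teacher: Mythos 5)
Your proposal follows the paper's own route for this lemma exactly: rescale via $\phi_{\tau}(t,x)=\phi_1(\tau^{-1}t,\tau^{-1/2}x)$ to reduce to the unit-scale high-frequency inhomogeneous estimate of \cite{Inu19,InWa19} and then track the powers of $\tau$, and your exponent accounting does reproduce $\alpha(q,r)+\tfrac{\gamma(q,r)}{2}+\alpha(\tilde q,\tilde r)+\tfrac{\gamma(\tilde q,\tilde r)}{2}+\tfrac{\delta}{2}-\tfrac12$ (note only that your displayed scaling identity omits the factor $\tau$ from $ds=\tau\,ds'$, which you then correctly reinstate in the bookkeeping). The one caveat the paper actually attaches to this high-frequency lemma is not the $q=\infty$ or $\tilde q=\infty$ endpoint --- that issue, and Appendix \ref{appA}, pertain to the low-frequency inhomogeneous estimate, while for high frequencies the damping $e^{-t/(2\tau)}$ makes the endpoints harmless, as you observe --- but rather the case $d=1$, which \cite{Inu19} excluded and which the paper supplies in Appendix \ref{appA.2} before scaling.
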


\begin{remark}
In \cite{Inu19}, the first author assumed that $d \geq 2$ in the Strichartz estimates for the high frequency part. However, they holds in the one dimensional case. See Appendix \ref{appA.2}. 
\end{remark}

We use the following lemma without notice. 

\begin{lemma}
For $\sigma,a \geq 0$ and $1<p<\infty$, we have
\begin{align*}
	\norm{ |\nabla|^{\sigma} \chi_{\leq \tau^{-1/2} }  f}_{L^p}
	\leq \tau^{-a/2} \norm{ |\nabla|^{\sigma-a} \chi_{\leq  \tau^{-1/2} }  f}_{L^p}
	\\
	\norm{ |\nabla|^{\sigma} \chi_{> \tau^{-1/2} }  f}_{L^p}
	\leq \tau^{a/2} \norm{ |\nabla|^{\sigma+a} \chi_{>  \tau^{-1/2} }  f}_{L^p}
\end{align*}
\end{lemma}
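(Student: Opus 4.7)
The plan is to reduce both inequalities to the standard Mikhlin--H\"ormander multiplier theorem via a parabolic rescaling, since each estimate is, in essence, a Bernstein-type inequality adapted to the cutoff at frequency $\tau^{-1/2}$.

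For the first estimate, I would fix an auxiliary radial $\tilde\chi\in C_c^\infty(\R^d)$ with $\tilde\chi\equiv 1$ on $\supp\chi_{\leq 1}$. Then $\chi_{\leq\tau^{-1/2}}(\xi)=\tilde\chi(\tau^{1/2}\xi)\chi_{\leq\tau^{-1/2}}(\xi)$, so on the Fourier side
\[
|\xi|^{\sigma}\chi_{\leq\tau^{-1/2}}(\xi)\hat f(\xi)
= \underbrace{|\xi|^{a}\tilde\chi(\tau^{1/2}\xi)}_{=:m_\tau(\xi)}\;\cdot\;|\xi|^{\sigma-a}\chi_{\leq\tau^{-1/2}}(\xi)\hat f(\xi).
\]
Writing $m_\tau(\xi)=\tau^{-a/2}\,m_0(\tau^{1/2}\xi)$ with $m_0(\eta)=|\eta|^{a}\tilde\chi(\eta)$, I observe that $m_0$ is compactly supported and smooth away from the origin with $|\eta|^{|\alpha|}|\partial^\alpha m_0(\eta)|\lesssim 1$ for all $\alpha$, which is Mikhlin's condition. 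Hence $m_0(\nabla)$ is bounded on $L^p$ for $1<p<\infty$, and since $L^p$-multiplier norms are invariant under isotropic dilation, $\|m_\tau(\nabla)\|_{L^p\to L^p}\lesssim\tau^{-a/2}$, yielding the first inequality.

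For the second estimate, the idea is dual: now the problem multiplier is $|\xi|^{-a}$ restricted to the high-frequency region. I would fix $\tilde\psi\in C^\infty(\R^d)$ with $\tilde\psi(\eta)=1$ for $|\eta|\geq 1$ and $\tilde\psi(\eta)=0$ for $|\eta|\leq 1/2$, so that $\chi_{>\tau^{-1/2}}(\xi)=\tilde\psi(\tau^{1/2}\xi)\chi_{>\tau^{-1/2}}(\xi)$. Then
\[
|\xi|^{\sigma}\chi_{>\tau^{-1/2}}(\xi)\hat f(\xi)
=\underbrace{|\xi|^{-a}\tilde\psi(\tau^{1/2}\xi)}_{=:n_\tau(\xi)}\;\cdot\;|\xi|^{\sigma+a}\chi_{>\tau^{-1/2}}(\xi)\hat f(\xi),
\]
and $n_\tau(\xi)=\tau^{a/2}\,n_0(\tau^{1/2}\xi)$ with $n_0(\eta)=|\eta|^{-a}\tilde\psi(\eta)$. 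Here $n_0$ vanishes near the origin and, for $|\eta|\geq 1/2$, satisfies $|\partial^\alpha n_0(\eta)|\lesssim |\eta|^{-a-|\alpha|}$, so again Mikhlin's condition $|\eta|^{|\alpha|}|\partial^\alpha n_0(\eta)|\lesssim 1$ holds. Scaling then gives $\|n_\tau(\nabla)\|_{L^p\to L^p}\lesssim\tau^{a/2}$, completing the second estimate.

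No step is truly obstructive: the only point requiring a moment of care is verifying Mikhlin's condition uniformly in the scaling parameter, which is handled automatically by writing the multiplier as $\tau^{\mp a/2}$ times a fixed symbol evaluated at $\tau^{1/2}\xi$ and using dilation-invariance of the $L^p\to L^p$ multiplier norm. The implicit constant absorbed into $\lesssim$ (equivalently $\leq$, after adjusting the choice of $\tilde\chi$ or $\tilde\psi$) depends only on $d$, $p$, $\sigma$, and $a$, and is independent of $\tau$.
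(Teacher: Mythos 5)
Your proof is correct and is in essence the argument the paper has in mind: the paper's own proof is the one-line remark that the estimates ``follow immediately'' from the frequency supports of $\chi_{\leq \tau^{-1/2}}$ and $\chi_{>\tau^{-1/2}}$, and you have simply supplied the standard justification (factoring out $|\xi|^{\pm a}$ times a fattened cutoff, verifying the Mikhlin--H\"ormander condition, and using dilation invariance of the $L^p$ multiplier norm) that makes this rigorous for $p\neq 2$. The only caveat, which you already note, is that the conclusion then holds with an implicit constant ($\lesssim$ rather than $\leq$ with constant $1$), which is how the lemma is actually used throughout the paper.
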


\begin{proof}
These inequalities follow immediately from $\chi_{\leq \tau^{-1/2} }$ and $\chi_{>  \tau^{-1/2} }$, respectively.
\end{proof}

We use the following lemmas by \cite{MNO02}. 

\begin{lemma}[{\cite[Lemma 3.3]{MNO02}}]
\label{lem3.6}
Let $I$ be a time interval. Take $X_i \in \mathcal{L}$ for $i \in \{0,1,2,...,N\}$ and assume that $|u(t,x)| \lesssim  \sum_{i=1}^{N} |v_i(t,x)|$ and $v_i \in X_i$. Then, we have
\begin{align*}
	\| u \|_{\sum_{i=0}^{N} X_i} \cleq \sum_{i=0}^{N} \| v_i \|_{X_i}.
\end{align*}
\end{lemma}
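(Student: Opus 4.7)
The statement is essentially a lattice decomposition fact for the class $\mathcal{L}$ of weighted mixed Lebesgue spaces $\alpha L^q(I;L^r(\R^d))$: each such space has the monotonicity property that $|f|\leq|g|$ pointwise implies $\|f\|_{X_i}\leq\|g\|_{X_i}$. I would exploit this together with the definition of the sum norm, namely
\begin{equation*}
  \|u\|_{\sum_{i=0}^{N}X_i}
  = \inf\Bigl\{\sum_{i=0}^{N}\|u_i\|_{X_i} \;:\; u=\sum_{i=0}^{N}u_i\Bigr\}.
\end{equation*}
Hence it suffices to exhibit a concrete decomposition of $u$ into pieces $u_i$, one living in each $X_i$, with $\|u_i\|_{X_i}\lesssim\|v_i\|_{X_i}$.

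The natural choice is a measurable partition of $I\times\R^d$ driven by which $|v_i|$ is largest. More precisely, I would fix some measurable tie-breaking rule and define
\begin{equation*}
  A_i := \bigl\{(t,x)\in I\times\R^d : |v_i(t,x)| = \max_{1\leq j\leq N}|v_j(t,x)| \bigr\},
\end{equation*}
chosen so that $\{A_i\}_{i=1}^{N}$ is a disjoint cover, and then set $u_i := u\,\mathbbm{1}_{A_i}$ for $i=1,\dots,N$ (and $u_0\equiv 0$, which handles the indexing discrepancy between the sum $\sum_{i=0}^{N}$ and the hypothesis $|u|\lesssim\sum_{i=1}^{N}|v_i|$). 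Then on $A_i$ the hypothesis gives $|u(t,x)|\lesssim\sum_{j=1}^{N}|v_j(t,x)|\leq N|v_i(t,x)|$, so $|u_i|\leq C_N|v_i|$ pointwise on all of $I\times\R^d$.

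Applying the lattice property of the space $X_i=\alpha_i L^{q_i}(I;L^{r_i}(\R^d))$ then yields $\|u_i\|_{X_i}\leq C_N\|v_i\|_{X_i}$. Since $u=\sum_{i=1}^{N}u_i=\sum_{i=0}^{N}u_i$ is an admissible decomposition, summing gives
\begin{equation*}
  \|u\|_{\sum_{i=0}^{N}X_i}
  \leq \sum_{i=0}^{N}\|u_i\|_{X_i}
  \lesssim \sum_{i=0}^{N}\|v_i\|_{X_i},
\end{equation*}
which is the desired inequality. The only non-routine point is verifying that the sets $A_i$ can be chosen measurably; this is standard (order the indices and set $A_i$ to be the subset of $\{|v_i|\geq|v_j|\forall j\}$ where $i$ is the smallest such index), but it is the one step I would be careful to justify rather than take for granted.
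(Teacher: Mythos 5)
Your argument is correct: the paper itself states this lemma without proof, citing \cite[Lemma 3.3]{MNO02}, and the proof there is exactly your decomposition of $I\times\R^d$ into the (measurably tie-broken) regions where each $|v_i|$ is maximal, followed by the pointwise bound $|u\mathbbm{1}_{A_i}|\leq C N|v_i|$ and the lattice monotonicity of the $\alpha L^q(I;L^r)$ norms. Your handling of the $i=0$ indexing mismatch via $u_0\equiv 0$ is also the sensible reading of what is evidently a typo in the statement.
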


This lemma and the H\"{o}lder inequality imply the difference estimate of the nonlinear term. 

\begin{lemma}[{\cite[Lemma 3.4]{MNO02}}]
\label{lem3.7}
Let $I$ be a time interval and $\mathcal{N}(u)=\lambda |u|^{p-1}u$ with $p>1$. Take $X_i \in \mathcal{L}$ and $Z_i \in \mathcal{B}$ for $i=0,1,2,3$. Assume that $\sigma(Z_i) < \min \{2,p\}$ and $X_i^{p-1}Z_i \in \mathcal{B}$ for $i=0,...,3$.
Then we have
\begin{align}
\label{eq3.1.0}
	&\| \mathcal{N}(u) \|_{\sum_{i=0}^{3} X_i^{p-1}Z_{i}} 
	\\ \notag
	&\quad \cleq \inf_{u=a+b} (\| a \|_{X_0 \cap X_1} + \| b \|_{X_2 \cap X_3})^{p-1} (\| a \|_{Z_0 \cap Z_2} + \| b \|_{Z_1 \cap Z_3} ).
\end{align}
\end{lemma}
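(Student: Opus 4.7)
The plan is to reduce the bound to a standard fractional chain/Leibniz (Moser) estimate on Besov spaces, combined with a bookkeeping step that distributes the roles of $a$ and $b$ between an ``amplitude'' role (contributing $p-1$ factors, controlled in an $X$-norm) and a ``carrier'' role (contributing the remaining factor, controlled in a $Z$-norm). The constraint $\sigma(Z_i) < \min\{2, p\}$ is exactly what enables such a Moser-type estimate: the bound $\sigma < 2$ permits a classical chain rule on low-frequency paraproducts, while $\sigma < p$ matches the H\"older regularity of $z \mapsto |z|^{p-1} z$ at the origin and hence controls the high-frequency remainders. Applied to a single function $w$ this yields
\begin{equation*}
\| \mathcal{N}(w) \|_{X^{p-1}Z} \lesssim \| w \|_X^{p-1} \| w \|_Z,
\end{equation*}
which is essentially the motivating definition of the space $X^{p-1}Z$ combined with the standard nonlinear Besov estimate for $|w|^{p-1} w$ via Bony's paraproduct.

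Given a decomposition $u = a + b$, I view $\mathcal{N}(a+b) = |a+b|^{p-1}(a+b)$ schematically as a product of the amplitude $|a+b|^{p-1}$ and the carrier $a+b$. The pointwise bound $|a+b|^{p-1} \lesssim |a|^{p-1} + |b|^{p-1}$, valid for $p \geq 1$, on the amplitude together with the split $a + b$ of the carrier produces four archetypal terms indexed by the two binary choices of which of $a, b$ plays each role. The amplitude-$a$/carrier-$a$ term lies in $X_0^{p-1} Z_0$ with bound $\| a \|_{X_0}^{p-1} \| a \|_{Z_0}$, the amplitude-$a$/carrier-$b$ term lies in $X_1^{p-1} Z_1$ with bound $\| a \|_{X_1}^{p-1} \| b \|_{Z_1}$, the amplitude-$b$/carrier-$a$ term lies in $X_2^{p-1} Z_2$ with bound $\| b \|_{X_2}^{p-1} \| a \|_{Z_2}$, and the amplitude-$b$/carrier-$b$ term lies in $X_3^{p-1} Z_3$ with bound $\| b \|_{X_3}^{p-1} \| b \|_{Z_3}$. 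Summing the four estimates via Lemma~\ref{lem3.6} and using the elementary inequality $\alpha^{p-1} + \beta^{p-1} \lesssim (\alpha + \beta)^{p-1}$ for $\alpha, \beta \geq 0$ and $p \geq 1$ (with constant depending on $p$) yields
\begin{equation*}
\| \mathcal{N}(u) \|_{\sum_{i=0}^{3} X_i^{p-1} Z_i} \lesssim (\| a \|_{X_0 \cap X_1} + \| b \|_{X_2 \cap X_3})^{p-1} (\| a \|_{Z_0 \cap Z_2} + \| b \|_{Z_1 \cap Z_3}),
\end{equation*}
and taking the infimum over splittings $u = a + b$ gives the claim.

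The main obstacle is to justify the ``amplitude/carrier split'' rigorously at the Besov (not merely pointwise) level, since $|a+b|^{p-1}$ is genuinely nonlinear in its arguments. I would address this via Bony's paraproduct decomposition of the product $|a+b|^{p-1} \cdot (a+b)$, estimating each paraproduct block by the single-function Moser estimate after inserting the pointwise inequality $|a+b|^{p-1} \lesssim |a|^{p-1} + |b|^{p-1}$ inside each frequency-localized block. Both halves of the constraint $\sigma(Z_i) < \min\{2, p\}$ are essential: without $\sigma < 2$ the classical chain rule on low-frequency blocks fails, and without $\sigma < p$ the composition by $|\cdot|^{p-1}(\cdot)$ cannot preserve Besov regularity of order $\sigma$ uniformly across amplitude scales.
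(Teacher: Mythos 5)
First, note that the paper does not prove this lemma at all: it is quoted verbatim from \cite[Lemma 3.4]{MNO02}, and the only proof-relevant remark in the text is that Lemma \ref{lem3.6} together with the H\"older inequality yields the nonlinear estimates. So your attempt must be measured against the argument in \cite{MNO02}, whose amplitude/carrier philosophy you have correctly absorbed, and whose bookkeeping (which of $a,b$ feeds which $X_i$ and which $Z_i$) you reproduce accurately.

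There is, however, a genuine gap at the central step. Your ``four archetypal terms'' are not an actual decomposition of $\mathcal{N}(u)$: the pointwise bound $|a+b|^{p-1}\lesssim|a|^{p-1}+|b|^{p-1}$ controls only $|\mathcal{N}(a+b)|$, it does not exhibit $\mathcal{N}(a+b)$ as a sum of four functions of the form $|a|^{p-1}a$, $|a|^{p-1}b$, etc. (for non-integer $p-1$ there is no multinomial expansion). Consequently you cannot ``sum the four estimates via Lemma \ref{lem3.6}'': that lemma requires the target spaces to lie in $\mathcal{L}$, precisely because a pointwise domination $|u|\lesssim\sum|v_i|$ only produces a decomposition with norm control in Lebesgue-type spaces. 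The targets here, $X_i^{p-1}Z_i\in\mathcal{B}$, carry positive smoothness $\sigma>0$, and Besov norms of positive order are not monotone under pointwise domination; the same objection defeats your proposed repair of ``inserting the pointwise inequality inside each frequency-localized block,'' since the obstruction reappears inside every block. The correct device, and the one used in \cite{MNO02}, is the exact integral identity
\begin{align*}
	\mathcal{N}(u)=\left[\int_0^1 \mathcal{N}'\bigl(\theta(a+b)\bigr)\,d\theta\right]a
	+\left[\int_0^1 \mathcal{N}'\bigl(\theta(a+b)\bigr)\,d\theta\right]b,
\end{align*}
which splits the \emph{carrier} exactly (no pointwise inequality needed there), confines the pointwise domination $|\mathcal{N}'(\theta u)|\lesssim|a|^{p-1}+|b|^{p-1}$ to the \emph{amplitude}, which is measured only in Lebesgue norms where Lemma \ref{lem3.6} legitimately applies, and then transfers the $\sigma$ derivatives onto the carrier or the amplitude via the fractional Leibniz rule; the chain-rule estimate for the amplitude is where $\sigma(Z_i)<\min\{2,p\}$ enters, since $\mathcal{N}'$ is H\"older continuous only of order $\min\{1,p-1\}$. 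Your heuristic is the right one, but without this exact decomposition the proof does not close.
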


Combining Lemmas \ref{lem3.6} and \ref{lem3.7}, we also have the similar estimate to \eqref{eq3.1.0} for inhomogeneous Besov spaces. 

We also use the following refined version of Lemma \ref{lem3.7} by Masmoudi and Nakanishi \cite{MaNa02}.

\begin{lemma}[{\cite[Lemma 3.2]{MaNa02}}]
\label{lem3.8}
Let $I$ be a time interval and $\mathcal{N}(u)=\lambda |u|^{p-1}u$ with $p>1$. Take $X_i \in \mathcal{L}$ and $Z_i \in \mathcal{B}$ for $i=0,1,2,3$. Assume that $\sigma(Z_i) < \min \{2,p\}$ and $X_i^{p-1}Z_i \in \mathcal{B}$ for $i=0,...,3$. Then we have
\begin{align*}
	\norm{P_j \mathcal{N}(u) }_{\sum_{i=0}^{3} X_i^{p-1}Z_i}
	&\lesssim \inf_{u=a+b} \left( \|a\|_{X_0 \cap X_1} + \|b\|_{X_2 \cap X_3}  \right)^{p-1} 
	\\
	&\quad \times \kappa_j *_j (\| P_j  a \|_{Z_0\cap Z_2} + \| P_j  b \|_{Z_1 \cap Z_3}),
\end{align*}
where $*_j$ denotes the convolution related to $j$ and 
\begin{align*}
	\kappa_j := \max_{i=0,1,2,3} \min\{ 2^{ \frac{\sigma(Z_i) j}{2} }, 2^{ \frac{(\sigma(Z_i) -2) j}{2} }\}.
\end{align*}
\end{lemma}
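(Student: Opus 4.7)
This is a refinement of Lemma \ref{lem3.7} in which one keeps track of the Littlewood--Paley block $P_j$ applied to the nonlinearity $\mathcal{N}(u)=\lambda|u|^{p-1}u$. I will follow the strategy of Masmoudi--Nakanishi \cite{MaNa02}: combine a multilinear expansion of $\mathcal{N}$ in the decomposition $u=a+b$ with a Bony-type paraproduct decomposition, and then apply the fractional Leibniz rule on the Besov factors. The infimum over $u=a+b$ is taken at the very end, exactly as in Lemma \ref{lem3.7}.

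Fix a decomposition $u=a+b$. Viewing $\mathcal{N}$ as a smooth function of $(u,\bar u)$ away from the origin, expand $\mathcal{N}(a+b)$ via the Taylor-type formula
\begin{align*}
\mathcal{N}(a+b) = \mathcal{N}(a) + \int_0^1 \bigl(b\,\partial_1\mathcal{N}(a+sb) + \bar b\,\partial_2\mathcal{N}(a+sb)\bigr)\,ds,
\end{align*}
iterated if necessary. Each resulting term has the form of a single ``distinguished'' factor $w_0\in\{a,b,\bar a,\bar b\}$ times a continuous function $M(a,b)$ of homogeneity $p-1$. Correspondingly, each term is assigned to exactly one of the four target spaces $X_i^{p-1}Z_i$; after H\"older on the $p-1$ non-distinguished factors inside $M$, one picks up the desired prefactor $(\|a\|_{X_0\cap X_1}+\|b\|_{X_2\cap X_3})^{p-1}$.

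For each such term, apply Littlewood--Paley to the distinguished factor, $w_0=\sum_k P_k w_0$, and decompose
\begin{align*}
P_j\bigl(P_k w_0\cdot M\bigr) \;=\; \text{(low-high, } k\ll j\text{)} \,+\, \text{(high-high, } k\gtrsim j\text{)}
\end{align*}
in the Bony sense, so that $P_j$ enforces the matching of $j$ and $k$ in each regime. The Besov estimate on $\|P_j(P_k w_0\cdot M)\|$ produces a kernel of the stated form $\kappa_{j-k}$: the two branches of the min arise from the two competing ways to distribute a derivative of order $\sigma=\sigma(Z_i)$, namely either placing it on $w_0$ (yielding a factor $2^{\sigma k/2}$, hence $2^{\sigma(j-k)/2}$ after factoring the Besov weight $2^{\sigma j/2}$ on the output), or transferring it to $M$ via integration by parts (yielding the complementary branch $2^{(\sigma-2)(j-k)/2}$). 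Summing over $k$ produces the convolution structure $\kappa*_j(\|P_k a\|_{Z_0\cap Z_2}+\|P_k b\|_{Z_1\cap Z_3})$, and taking the maximum over $i\in\{0,1,2,3\}$ accommodates the sum-space norm on the left-hand side, giving the stated $\kappa_j$.

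The main obstacle is the sharp identification of $\kappa_j$ with the min of the two exponents. The hypothesis $\sigma(Z_i)<\min\{2,p\}$ is essential here: the restriction $\sigma<2$ ensures that both branches decay as $|j|\to\infty$, so that $\kappa$ is a genuinely summable convolution kernel, while $\sigma<p$ guarantees enough regularity of $\mathcal{N}$ to execute the fractional chain and Leibniz rules. A secondary technical point is the non-integer case $p\notin 2\Z+1$, where $\mathcal{N}$ is only $C^{\lfloor p\rfloor,\{p\}}$ away from the origin and one must control the Taylor integrand uniformly in $s\in[0,1]$. Once the bound for each monomial is established, summing over the finitely many monomials and taking the infimum over $u=a+b$ concludes the argument.
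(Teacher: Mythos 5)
The first thing to say is that the paper does not prove this statement at all: Lemma \ref{lem3.8} is imported verbatim from \cite[Lemma 3.2]{MaNa02}, so there is no in-paper proof to compare your argument against. Judged on its own terms, your proposal correctly identifies the strategy of the cited proof --- isolate a single distinguished factor by a Taylor/multilinear expansion of $\mathcal{N}(a+b)$, apply H\"older to the remaining $p-1$ factors to produce $(\|a\|_{X_0\cap X_1}+\|b\|_{X_2\cap X_3})^{p-1}$, and run a Littlewood--Paley/paraproduct decomposition on the distinguished factor to turn $\|P_j\mathcal{N}(u)\|$ into a convolution against $\|P_k a\|$, $\|P_k b\|$ --- and it correctly explains the roles of the two halves of the hypothesis $\sigma(Z_i)<\min\{2,p\}$ (summability of $\kappa$ on one side, regularity of $\mathcal{N}$ on the other).

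However, as a proof it has a genuine gap at exactly the point where the lemma differs from Lemma \ref{lem3.7}: the derivation of the kernel $\kappa_j=\max_i\min\{2^{\sigma(Z_i)j/2},2^{(\sigma(Z_i)-2)j/2}\}$ is asserted rather than carried out. The branch $2^{\sigma(Z_i)(j-k)/2}$ from the low--high interaction is routine, but your mechanism for the complementary branch --- ``transferring the derivative to $M$ via integration by parts'' --- does not work as stated, because the factor $M(a,b)$ built from derivatives of $\mathcal{N}$ is not smooth: for non-integer $p$ it is only H\"older of limited order, and the decay one can extract in the high--high regime $k\gtrsim j$ is governed precisely by that limited regularity (this is where $\min\{2,p\}$ enters quantitatively, not merely qualitatively). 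Without that computation you have not shown that the exponent is $(\sigma-2)/2$ rather than something weaker, nor that the resulting kernel is summable after taking the max over $i$, and the non-integer-$p$ issue you flag in your last paragraph is exactly the obstruction, not a ``secondary technical point.'' Since the lemma is a black-box citation in this paper, the practical fix is either to cite \cite{MaNa02} for the kernel estimate or to reproduce their paraproduct computation in full; the outline alone does not substitute for it.
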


\subsection{Local existence and Uniformly bound}

In what follows, we only treat the case of $d\geq 3$. The cases $d=1,2$ are easier (see Section \ref{sec2.5}). 

\subsubsection{Function spaces}
We define the function spaces as follows.
\begin{align*}
	\mathscr{E}&:=L_t^{\infty}H^1,
	\\
	\mathscr{H}_1&:=L_{t,x}^{\frac{2(d+2)}{d-2}},
	&
	\mathscr{H}_2&:= L_{t}^{\frac{2(d+2)}{d}} B_{\frac{2(d+2)}{d}, 2}^{1},
	\\
	\mathscr{W}_1&:=\tau^{\frac{d-2}{4(d+1)}} L_{t,x}^{\frac{2(d+1)}{d-2}},
	&
	\mathscr{W}_2&:=\tau^{\frac{d-1}{4(d+1)}} L_{t}^{\frac{2(d+1)}{d-1}} B_{\frac{2(d+1)}{d-1}, 2}^{\frac{1}{2}},
	\\
	\mathscr{D}_1&:=\tau^{\frac{1}{4}} L_{t}^{\frac{2(d+2)}{d}} B_{\frac{2(d+2)}{d}, 2}^{\frac{1}{2}}.
\end{align*}
We also define the function spaces $\mathscr{R}_0, \cdots ,\mathscr{R}_3$ by 
\begin{align*}
	\mathscr{R}_0&:=L_t^{\frac{2(d+2)}{d+4}}W_x^{1, \frac{2(d+2)}{d+4}} 
	=\mathscr{H}_1^{\frac{4}{d-2}} \mathscr{H}_2,
	\\
	\mathscr{R}_1&:= \tau^{\frac{1}{4}} L_t^{\frac{2(d+2)}{d+4}}W_x^{\frac{1}{2}, \frac{2(d+2)}{d+4}}
	=\mathscr{H}_1^{\frac{4}{d-2}} \mathscr{D}_1,
	\\
	\mathscr{R}_2&:=\tau^{\frac{d+3}{4(d+1)}} L_{t}^{\frac{2(d+1)}{d+3}}W_{x}^{\frac{1}{2}, \frac{2(d+1)}{d+3}}
	=\mathscr{W}_1^{\frac{4}{d-2}} \mathscr{W}_2,
	\\
	\mathscr{R}_3&:=\tau^{\frac{1}{d+1}}L_{t}^{\frac{2(d+1)(d+2)}{d^2+5d+8}}W_{x}^{1, \frac{2(d+1)(d+2)}{d^2+5d+8}}
	=\mathscr{W}_1^{\frac{4}{d-2}} \mathscr{H}_2.
\end{align*}
We will use the following function spaces to construct a contraction mapping. 
\begin{align*}
	\mathscr{Z}_{\theta}&:= [\mathscr{E},\mathscr{H}_2]_{\theta} | [\mathscr{E},\mathscr{W}_2]_{\theta} \cap [\mathscr{E},\mathscr{D}_1]_{\theta}
	\\
	\mathscr{X}&:=\mathscr{H}_1 | \mathscr{W}_1.
\end{align*}
For the reader's convenience, we give an explanation of the exponents of the function spaces. 
We set the exponents $(q,r)$ by 
\begin{align*}
	\mathfrak{e}&:=(\infty,2), 
	\\
	\mathfrak{h}_1&:=\l(\frac{2(d+2)}{d-2},\frac{2(d+2)}{d-2}\r), 
	&\mathfrak{h}_2:=\l(\frac{2(d+2)}{d},\frac{2(d+2)}{d}\r),
	\\
	\mathfrak{w}_1&:=\l(\frac{2(d+1)}{d-2},\frac{2(d+1)}{d-2}\r),
	&\mathfrak{w}_2:=\l(\frac{2(d+1)}{d-1},\frac{2(d+1)}{d-1}\r),
	\\
	\mathfrak{w}_3 & :=\left( \frac{2(d+1)(d+2)}{d^2+d-4}, \frac{2(d+1)(d+2)}{d^2+d-4}\right).
\end{align*}
The exponent $\mathfrak{e}$ is related to the energy space $\mathscr{E}$, $\mathfrak{h}_1$ is $\mathscr{H}_1$, $\mathfrak{h}_2$ is $\mathscr{H}_2$ and $\mathscr{D}_1$, $\mathfrak{w}_1$ is $\mathscr{W}_1$, $\mathfrak{w}_2$ is $\mathscr{W}_2$. 
The exponents of $\mathscr{R}_0$ and $\mathscr{R}_1$ is  $\mathfrak{h}_2'$, $\mathscr{R}_2$ is related to $\mathfrak{w}_2'$, $\mathscr{R}_3$ is $\mathfrak{w}_3'$, where $\mathfrak{p}'=(p',p')$.

$\mathfrak{h}_2$ satisfies $\frac{d}{2}\left( \frac{1}{2} - \frac{1}{r}\right)=\frac{1}{q}$, 
$\mathfrak{w}_2$ satisfies $\frac{d-1}{2}\left( \frac{1}{2} - \frac{1}{r}\right)=\frac{1}{q}$, 
$\mathfrak{w}_1$ and $\mathfrak{w}_3$ satisfies $\frac{d-1}{2}\left( \frac{1}{2} - \frac{1}{r}\right) > \frac{1}{q}$.
Namely, $\mathfrak{h}_2$ is related to the heat admissible pair and $\mathfrak{w}_1$ and $\mathfrak{w}_2$ 
are related to the wave admissible pair. 
$\mathfrak{h}_1$ satisfies $\frac{d-1}{2}\left( \frac{1}{2} - \frac{1}{r}\right) > \frac{1}{q}$. 
However, since the Sobolev embedding $\| f \|_{L_{x}^{\frac{2(d+2)}{d-2}}} \cleq \| f \|_{\dot{W}_{x}^{1,\frac{2d(d+2)}{d^2+4}}}$ holds,  its new exponent $\widetilde{\mathfrak{h}}_1 := \l(\frac{d-2}{2(d+2)},\frac{d^2+4}{2d(d+2)}\r)$ lies on the heat line $\frac{d}{2}\left( \frac{1}{2} - \frac{1}{r}\right)=\frac{1}{q}$. Thus, $\mathfrak{h}_1$ is related to the heat admissible pair.


We collect the value of $\alpha$, $\gamma$, and $\delta$ for these exponents in the following tables.

\begin{table}[htb]
\begingroup
\renewcommand{\arraystretch}{1.3}
\begin{tabular}{|c||c|c|c|c|c|c|} 
\hline
  &  $\mathfrak{e}$ & $\widetilde{\mathfrak{h}}_1$ & $\mathfrak{h}_2$ & $\mathfrak{w}_1$ & $\mathfrak{w}_2$ & $\mathfrak{w}_3$
\\ \hline \hline
$\alpha$	 & 0 & 0 & 0 & $-\frac{d+4}{4(d+1)}$ & $-\frac{1}{2(d+1)}$ & $-\frac{1}{d+1}$ \\ \hline
\end{tabular}
\caption{The value of $\alpha$ for the exponents. }
\label{tab2}
\endgroup
\end{table}

\begin{table}[htb]
\begingroup
\renewcommand{\arraystretch}{1.3}
\begin{tabular}{|c||c|c|c|c|c|} 
\hline
  &  $\mathfrak{e}$  & $\mathfrak{h}_2$ & $\mathfrak{w}_1$ & $\mathfrak{w}_2$ & $\mathfrak{w}_3$
\\ \hline \hline
$\gamma$	 & 0 &  $\frac{d+1}{2(d+2)}$ & $1$ & $\frac{1}{2}$ & $\frac{d+4}{2(d+2)}$ \\ \hline
\end{tabular}
\caption{The value of $\gamma$ for the exponents. }
\label{tab3}
\endgroup
\end{table}
%

\begin{table}[htb]
\begingroup
\renewcommand{\arraystretch}{1.3}
\begin{tabular}{|c||c|c|c|c|} 
	\hline
	$X \setminus Y$
		&  $\mathfrak{e}$
		&  $\mathfrak{h}_2$ 
		&  $\mathfrak{w}_2$ 
		&  $\mathfrak{w}_3$ 
	\\[4pt]
	\hline \hline 
	$\mathfrak{e}$
		& $0$
		& $0$
		& $0$
		& $0$
	\\[4pt] \hline
	$\mathfrak{h}_2$
		& $0$
		& $0$
		& $\frac{d-1}{2d(d+1)}$
		& $\frac{d^2+d-4}{2d(d+1)(d+2)}$
	\\[4pt] \hline
	$\mathfrak{w}_1$
		& $0$
		& $\frac{d-2}{2d(d+1)}$
		& $0$
		& $0$
	\\[4pt] \hline
	$\mathfrak{w}_2$
		& $0$
		& $\frac{d-1}{2d(d+1)}$
		& $0$
		& $0$
	\\[4pt] \hline
\end{tabular}
\caption{The value of $\delta(X,Y)$.}
\endgroup
\end{table}


\subsubsection{Uniform boundedness}


We show the uniform boundedness. Since we have
\begin{align*}
	u_{\tau} = \l( \frac{1}{\tau} \mathcal{D}_{\tau}(t) + \partial_t \mathcal{D}_{\tau}(t) \r) f_{\tau}
	+ \mathcal{D}_{\tau}(t)  g_{\tau}
	+ \int_{0}^{t} \frac{1}{\tau} \mathcal{D}_{\tau} (t-s) \mathcal{N}(u_{\tau})(s)ds,
\end{align*}
where we recall that $\mathcal{N}(u)=\mu |u|^{p-1}u$, 
it holds from the homogeneous Strichartz estimates that
\begin{align*}
	\|  u_{\tau} \|_{\mathscr{E} \cap \mathscr{Z}_{\theta} \cap \mathscr{X}}
	\cleq \| f_{\tau} \|_{H^1} + \tau^{\frac{1}{2}} \| g_{\tau} \|_{L^2} 
	+\left\| \int_{0}^{t} \frac{1}{\tau} \mathcal{D}_{\tau} (t-s) \mathcal{N}(u_{\tau})(s)ds  \right\|_{\mathscr{E} \cap \mathscr{Z}_{\theta} \cap \mathscr{X}}.
\end{align*}
By the inhomogeneous Strichartz estimates, we obtain the following estimate.
\begin{align*}
	\left\| \int_{0}^{t} \frac{1}{\tau} \mathcal{D}_{\tau} (t-s) \mathcal{N}(u_{\tau})(s)ds  \right\|_{\mathscr{E} \cap \mathscr{Z}_{\theta} \cap \mathscr{X}}
	\cleq T^{1-\theta} \| \mathcal{N}(u_{\tau})\|_{\sum_{i=0}^{3}[\mathscr{E},\mathscr{R}_i]_{\theta}}.
\end{align*}
Then,  by Lemmas \ref{lem3.6} and \ref{lem3.7}, we have
\begin{align*}
	\norm{\mathcal{N}(u)}_{\sum_{i=0}^{3}[\mathscr{E},\mathscr{R}_i]_{\theta}}
	&\cleq \| u  \|_{\mathscr{X}}^{p-1} \| u  \|_{\mathscr{Z}_{\theta}}.
\end{align*}
Therefore, we obtain 
\begin{align}
\label{eq3.0.0}
	\|  u_{\tau} \|_{\mathscr{E} \cap \mathscr{Z}_{\theta} \cap \mathscr{X}[0,T]}
	&\lesssim \| f_{\tau} \|_{H^1} + \tau^{\frac{1}{2}} \| g_{\tau} \|_{L^2}
	+ T^{1-\theta}
	 \|  u_{\tau} \|_{\mathscr{E} \cap \mathscr{Z}_{\theta} \cap \mathscr{X}[0,T]}^{p}
\end{align}
By the Strichartz estimate and Lemmas \ref{lem3.6} and \ref{lem3.7}, we also have
\begin{align}
\label{eq3.0.1}
	\tau^{\frac{1}{2}}\| \partial_t u_{\tau} \|_{L_t^{\infty} L_x^2}
	\lesssim  \|f_{\tau}\|_{H^1} + \tau^{\frac{1}{2}} \|g_{\tau}\|_{L^2} +T^{1-\theta} \|  u_{\tau} \|_{\mathscr{E} \cap \mathscr{Z}_{\theta} \cap \mathscr{X}[0,T]}^{p}.
\end{align}
By \eqref{eq3.0.0} and \eqref{eq3.0.1}, since $\limsup_{\tau \to 0} (\|f_{\tau}\|_{H^1} + \tau^{1/2}\|g_{\tau}\|_{L^2})<\infty$, there exist $C_{0}>0$ and $\tau_{0}>0$ such that 
\begin{align}
\label{eq3.0}
	\|  u_{\tau} \|_{\mathscr{E} \cap \mathscr{Z}_{\theta} \cap \mathscr{X}[0,T]} + \tau^{\frac{1}{2}}\| \partial_t u_{\tau} \|_{L_t^{\infty} L_x^2[0,T]}
	&\lesssim C_{0}
	+ T^{1-\theta} \|  u_{\tau} \|_{\mathscr{E} \cap \mathscr{Z}_{\theta} \cap \mathscr{X}[0,T]}^{p}
\end{align} 
for any $T < T_{\tau}^{*}$ and $0<\tau<\tau_{0}$. 

Suppose that $\liminf_{\tau\to 0}T_{\tau}^{*}=0$. Then, there exists a sequence $\{\tau_n\} \subset (0,\tau_{0})$ such that $T_{\tau_n}^{*} \to 0$. For sufficiently large $n$, it holds from \eqref{eq3.0} that
\begin{align}
\label{eq3.0.1}
	\|  u_{\tau_n} \|_{\mathscr{E} \cap \mathscr{Z}_{\theta} \cap \mathscr{X}[0,T]} 
	+ \tau_n^{\frac{1}{2}}\| \partial_t u_{\tau_n} \|_{L_t^{\infty} L_x^2[0,T]}
	\lesssim C_{0}
\end{align}
for any $T<T_{\tau_n}^{*}$. This estimate and the blow-up alternative imply a contradiction. Therefore, we have $T^{*}=\liminf_{\tau\to 0}T_{\tau}^{*}>0$ and thus, for any $T<T^{*}$, there exist $C_T>0$ and $\tau_T$ such that $\|  u_{\tau} \|_{\mathscr{E} \cap \mathscr{Z}_{\theta} \cap \mathscr{X}[0,T]} + \tau^{1/2}\| \partial_t u_{\tau} \|_{L_t^{\infty} L_x^2[0,T]}\lesssim C_{T}$ for $\tau <\tau_T$. Consequently, we obtain Theorem \ref{thm1.1}. 


\subsection{Proof of $L^2$-convergence}

In this section, we show that the $L^2$-convergence by a direct calculation. 

We set the function spaces without derivative by 
\begin{align*}
	\mathscr{E}^0&:=L_t^{\infty}L_x^2
	\\
	\mathscr{H}_2^0&:= L_{t}^{\frac{2(d+2)}{d}} L_x^{\frac{2(d+2)}{d}}
\end{align*}
and
\begin{align*}
	\mathscr{R}_0^0&:=L_t^{\frac{2(d+2)}{d+4}}L_x^{\frac{2(d+2)}{d+4}} 
	= \mathscr{H}_1^{\frac{4}{d-2}} \mathscr{H}_2^0
	\\
	\mathscr{R}_3^0&:=\tau^{\frac{1}{d+1}}L_{t}^{\frac{2(d+1)(d+2)}{d^2+5d+8}}L_{x}^{\frac{2(d+1)(d+2)}{d^2+5d+8}}
	= \mathscr{W}_1^{\frac{4}{d-2}} \mathscr{H}_2^0
\end{align*}
Moreover, we set
\begin{align*}
	\mathscr{Z}_{\theta}^0&:= [\mathscr{E}^0,\mathscr{H}_2^0]_{\theta} | [\mathscr{E}^0,\mathscr{W}_2^0]_{\theta} \cap [\mathscr{E}^0,\mathscr{D}_1^0]_{\theta}
	\\
	\mathscr{Y}&:=[\mathscr{E}^{0},\mathscr{H}_2^0]_{\theta}
\end{align*}

We decompose $u_{\tau} - v$ as follows. 
\begin{align*}
	u_{\tau}(t) - v(t)
	&=\left( \frac{1}{\tau} \mathcal{D}_{\tau}(t) + \partial_t \mathcal{D}_{\tau}(t) \right) (f_{\tau}-f)
	\\
	&\quad +\left( \frac{1}{\tau} \mathcal{D}_{\tau}(t) + \partial_t \mathcal{D}_{\tau}(t) - e^{t\Delta} \right) f
	\\
	&\quad+ \mathcal{D}_{\tau}(t)g_{\tau}
	\\
	&\quad +\int_{0}^{t} \frac{1}{\tau} \mathcal{D}_{\tau}(t-s)(\mathcal{N}(u) -\mathcal{N}(v)) ds
	\\
	&\quad +\int_{0}^{t} \left( \frac{1}{\tau} \mathcal{D}_{\tau}(t-s)  - e^{(t-s)\Delta} \right) \mathcal{N}(v) ds
	\\
	&=:L_1+L_2+L_3+N_1+N_2
\end{align*}
We estimate the $\mathscr{Y}$-norm of these terms. 
First, we discuss the $\mathscr{Y}$-estimate of the homogeneous parts $L_1$, $L_2$, and $L_3$. By the embedding $\mathscr{Y} \supset \mathscr{E}^0 \cap \mathscr{H}_2^0$, it is enough to estimate $\mathscr{E}^0 \cap \mathscr{H}_2^0$-norms. 

\noindent{\bf The $\mathscr{Y}$-estimate of $L_3$:} 

By the Strichartz estimate, we obtain
\begin{align*}
	\| \mathcal{D}_{\tau} (t) g_{\tau} \|_{\mathscr{E}^0 \cap \mathscr{H}_2^0}
	&\lesssim  \left\| \cD_{\tau}(t)  \chi_{\leq  \tau^{-1/2} }  g_{\tau}\right\|_{\mathscr{E}^0 \cap \mathscr{H}_2^0}
	+ \left\| \cD_{\tau}(t) \chi_{> \tau^{-1/2} }  g_{\tau}\right\|_{\mathscr{E}^0 \cap \mathscr{H}_2^0}
	\\
	&\lesssim \tau \| g_{\tau}\|_{L^2}
	+ \tau^{\frac{1}{2} + \frac{\gamma(\mathfrak{h}_2)}{2} } \left\| |\nabla|^{\gamma(\mathfrak{h}_2)-1} \chi_{> \tau^{-1/2} }g_{\tau}\right\|_{L^2}
	\\
	&\lesssim \tau \|g_{\tau}\|_{L^2}
\end{align*}

\noindent{\bf The $\mathscr{Y}$-estimate of $L_1$:} 

In the same way as above, by dividing into the low and high frequency parts and applying the Strichartz estimates, we have
\begin{align*}
	\left\| \left( \frac{1}{\tau} \mathcal{D}_{\tau}(t) + \partial_t \mathcal{D}_{\tau}(t) \right) (f_{\tau}-f)  \right\|_{\mathscr{E}^0 \cap \mathscr{H}_2^0}
	&\lesssim \| f_{\tau}-f\|_{L^2}.
\end{align*} 

\noindent{\bf The $\mathscr{Y}$-estimate of $L_2$:} 

First, we consider the high frequency part $|\xi|> (8\tau)^{-1/2}$. The constant is not essential but  technical. 
By the triangle inequality, we have 
\begin{align*}
	&\left\| \left( \frac{1}{\tau} \cD_{\tau}(t) + \partial_t \cD_{\tau}(t) - e^{t\Delta}\right)  \chi_{> (8\tau)^{-1/2} }  f \right\|_{\mathscr{E}^0 \cap \mathscr{H}_2^0}
	\\
	&\lesssim \left\| \left( \frac{1}{\tau} \cD_{\tau}(t) + \partial_t \cD_{\tau}(t) \right) \chi_{> (8\tau)^{-1/2} }  f \right\|_{\mathscr{E}^0 \cap \mathscr{H}_2^0}
	+ \left\|  e^{t\Delta} \chi_{> (8\tau)^{-1/2} }  f \right\|_{\mathscr{E}^0 \cap \mathscr{H}_2^0}
\end{align*}
By the Strichartz estimate, we obtain
\begin{align*}
	\left\| \left( \frac{1}{\tau} \cD_{\tau}(t) + \partial_t \cD_{\tau}(t) \right) \chi_{> (8\tau)^{-1/2} } f \right\|_{\mathscr{E}^0 \cap \mathscr{H}_2^0}
	&\lesssim  \left\| \chi_{> (8\tau)^{-1/2}  }  f \right\|_{L^2} 
	\\
	&\lesssim  \tau^{\frac{1}{2}} \left\| \chi_{> (8\tau)^{-1/2} } |\nabla| f \right\|_{L^2}
\end{align*}
since $\gamma(\mathfrak{h}_2)<1/2$. 
And by the Strichartz estimate for the heat propagator we have
\begin{align*}
	 \left\|  e^{t\Delta} \chi_{> (8\tau)^{-1/2} }  f \right\|_{\mathscr{E}^0 \cap \mathscr{H}_2^0}
	  \lesssim \left\| \chi_{> (8\tau)^{-1/2} }   f \right\|_{L_x^2}
	 \lesssim \tau^{\frac{1}{2}} \left\| \chi_{> (8\tau)^{-1/2} }  |\nabla| f \right\|_{L_x^2}.
\end{align*} 
It follows from these estimates that 
\begin{align*}
	\left\| \left( \frac{1}{\tau} \cD_{\tau}(t) + \partial_t \cD_{\tau}(t) - e^{t\Delta}\right)  \chi_{> (8\tau)^{-1/2} }  f \right\|_{\mathscr{E}^0 \cap \mathscr{H}_2^0}
	\lesssim  \tau^{\frac{1}{2}} \left\| \chi_{> (8\tau)^{-1/2} } |\nabla| f \right\|_{L_x^2}.
\end{align*}
Since $f \in H^1$, the right hand side is $o(\tau^{1/2})$. 

%

Next, we consider the low frequency part $|\xi| \leq (8\tau)^{-1/2}$. 
We divide the operators of $L_2$ into three parts as follows. 
\begin{align*}
	\frac{1}{\tau} \widehat{\cD_{\tau}}(t) + \partial_t \widehat{\cD_{\tau}}(t) -\widehat{e^{t\Delta}}
	&=  \left\{\frac{1}{ \lambda_{\tau}^{-} -  \lambda_{\tau}^{+}} 
	\l(\lambda_{\tau}^{-} e^{t \lambda_{\tau}^{+}}
	- \lambda_{\tau}^{+} e^{t \lambda_{\tau}^{-}} \r) -e^{-t|\xi|^2} \right\}
	\cF
	\\
	&= \left(\frac{\lambda_{\tau}^{-} }{ \lambda_{\tau}^{-} -  \lambda_{\tau}^{+}} -1\right)e^{t \lambda_{\tau}^{+}}\cF
	\\
	&\quad + (e^{t \lambda_{\tau}^{+}} -e^{-t|\xi|^2})\cF
	\\
	&\quad - \frac{\lambda_{\tau}^{+} }{ \lambda_{\tau}^{-} -  \lambda_{\tau}^{+}}e^{t \lambda_{\tau}^{-}}\cF
	\\
	&=:I+J+K
\end{align*}
We set $[\xi]_{\tau}:=\sqrt{1-4\tau |\xi|^2}$. Since $|\xi| \leq (8\tau)^{-1/2}$, we have $[ \xi ]_{\tau}\geq  1/\sqrt{2}$. Then, it holds that
\begin{align}
\label{eq222}
	\frac{\lambda_{\tau}^{-} }{ \lambda_{\tau}^{-} -  \lambda_{\tau}^{+}} -1
	=\frac{\lambda_{\tau}^{+} }{ \lambda_{\tau}^{-} -  \lambda_{\tau}^{+}}
	= \frac{2\tau|\xi|^2}{[\xi]_{\tau}(1+[\xi]_{\tau} )} \lesssim \tau^{\frac{1}{2}}|\xi|.
\end{align}
Now, We have the following Strichartz type estimate.
\begin{align*}
	\| e^{t \lambda_{1}^{\pm}(\nabla)}\chi_{\leq1} f \|_{\mathscr{E}^0 \cap \mathscr{H}_2^0} \cleq  \|f\|_{L^2}
\end{align*}
This follows from the argument by e.g. \cite{IIOW19} and \cite{Inu19}, and thus we omit the detail. By the scaling, this estimate implies
\begin{align*}
	\| e^{t \lambda_{ \tau}^{\pm}(\nabla)}\chi_{\leq \tau^{-1/2} } f \|_{\mathscr{E}^0 \cap \mathscr{H}_2^0} 
	\cleq  \|f\|_{L^2},
\end{align*}
where we note that the scaling order $\alpha(q,r)$ of $\tilde{\tau}$ disappears since $\alpha(\mathfrak{h})=\alpha(\mathfrak{e})=0$. 
Thus, we obtain
\begin{align*}
	\|\chi_{<(8\tau)^{-1/2}} I f\|_{\mathscr{E}^0 \cap \mathscr{H}_2^0}
	\lesssim \tau^{\frac{1}{2}} \|f\|_{H^1}.
\end{align*}
By \eqref{eq222}, 
we also have
\begin{align*}
	\|\chi_{< (8\tau)^{-1/2} } K f\|_{\mathscr{E}^0 \cap \mathscr{H}_2^0}
	\lesssim \tau^{\frac{1}{2}} \|f\|_{H^1}.
\end{align*}
To estimate $J$, we define $h(\tau):=e^{t\lambda_{\tau}^{+}}$. Then, $J=h(\tau) - h(0)$. Therefore, by the fundamental theorem of calculus, we have
\begin{align*}
	\left\| \chi_{\leq (8\tau)^{-1/2}}(\nabla) J f \right\|_{\mathscr{E}^0 \cap \mathscr{H}_2^0}
	&=\left\| \mathcal{F}^{-1}\{ (h(\tau) - h(0)) \chi_{\leq(8\tau)^{-1/2}} \hat{f} \} \right\|_{\mathscr{E}^0 \cap \mathscr{H}_2^0}
	\\
	&\leq \int_0^\tau \left\| \mathcal{F}^{-1}\{h'(\tilde{\tau}) \chi_{\leq(8\tau)^{-1/2} } \hat{f}\} \right\|_{\mathscr{E}^0 \cap \mathscr{H}_2^0}  d\tilde{\tau}
	\\
	&\leq \int_0^\tau \left\| \mathcal{F}^{-1}\{h'(\tilde{\tau}) \chi_{\leq(8\tilde{\tau})^{-1/2}} \hat{f}\} \right\|_{\mathscr{E}^0 \cap \mathscr{H}_2^0}  d\tilde{\tau}
\end{align*}
where we used $\tilde{\tau} \leq \tau$ in the last inequality. 
By a simple calculation, we have
\begin{align*}
	h'(\tilde{\tau})
	=\frac{-4|\xi|^4}{\langle \xi \rangle_{\tilde{\tau}} (1+\langle \xi \rangle_{\tilde{\tau}})^2} t e^{t\lambda_{\tilde{\tau}}^{+}}
	\lesssim  \tilde{\tau}^{-\frac{1}{2}}|\xi| t|\xi|^2e^{t\lambda_{\tilde{\tau}}^{+}}
	\lesssim  \tilde{\tau}^{-\frac{1}{2}}|\xi|
\end{align*}
when $|\xi| \leq (8\tilde{\tau})^{-1/2}$. Thus, by the Strichartz estimates, we obtain
\begin{align*}
	\int_0^\tau \left\| \mathcal{F}^{-1}\{h'(\tilde{\tau})  \chi_{\leq (8\tilde{\tau})^{-1/2} }  \hat{f}\} \right\|_{\mathscr{E}^0 \cap \mathscr{H}_2^0}  d\tilde{\tau}
	&\lesssim \int_{0}^{\tau} \tilde{\tau}^{-\frac{1}{2}} d\tilde{\tau} \|f\|_{H^1}
	\\
	&\approx \tau^{\frac{1}{2}} \|f\|_{H^1}. 
\end{align*}
As a conclusion, we get
\begin{align*}
	&\left\| \left( \frac{1}{\tau} \cD_{\tau}(t) + \partial_t \cD_{\tau}(t) - e^{t\Delta}\right)  \chi_{< (8\tau)^{-1/2}  }  f \right\|_{\mathscr{E}^0 \cap \mathscr{H}_2^0}
	\\
	&\lesssim \| \chi_{< (8\tau)^{-1/2} }  f\|_{L^2}
	\\
	&\lesssim \tau^{\frac{1}{2}} \| f\|_{H^1}.
\end{align*}

Next, we consider the estimate of the nonlinear terms $N_1$ and $N_2$. 

\noindent{\bf The $\mathscr{Y}$-estimate of $N_1$:}

It holds from the Strichartz estimates that
\begin{align*}
	\| N_1 \|_{\mathscr{Y}} \lesssim  T^{1-\theta} \| \mathcal{N}(u_{\tau}) - \mathcal{N}(v) \|_{[\mathscr{E}^0,\mathscr{R}_0^0]_{\theta}+[\mathscr{E}^0,\mathscr{R}_3^0]_{\theta}}.
\end{align*}

Since we have
\begin{align*}
	|\mathcal{N}(u_{\tau}) - \mathcal{N}(v)| \cleq (|u_{\tau}|^{p-1}+ |v|^{p-1})|u_{\tau}-v|, 
\end{align*}
by dividing $u_{\tau},v$ into low frequency part $u_{\tau,l},v_l$ and high frequency part $u_{\tau,h},v_h$, we have
\begin{align*}
	|\mathcal{N}(u_{\tau}) - \mathcal{N}(v)| 
	&\lesssim (|u_{\tau,l}|^{p-1} + |u_{\tau,h}|^{p-1} + |v_l|^{p-1} + |v_h|^{p-1}) |u_{\tau}-v| 
	\\
	&\lesssim (|u_{\tau,l}|^{p-1}+ |v_l|^{p-1}) |u_{\tau}-v| 
	+(|u_{\tau,h}|^{p-1} + |v_h|^{p-1}) |u_{\tau}-v|.
\end{align*}
By Lemma \ref{lem3.6} and the H\"{o}lder inequality, we obtain
\begin{align*}
	&\| \mathcal{N}(u_{\tau}) - \mathcal{N}(v) \|_{[\mathscr{E}^0,\mathscr{R}_0^0]_{\theta}+[\mathscr{E}^0,\mathscr{R}_3^0]_{\theta}}
	\\
	&\lesssim 
	\| (|u_{\tau,l}|^{p-1}+ |v_l|^{p-1}) |u_{\tau}-v| \|_{[\mathscr{E}^0,\mathscr{R}_0^0]_{\theta} }
	+\|(|u_{\tau,h}|^{p-1} + |v_h|^{p-1}) |u_{\tau}-v| \|_{[\mathscr{E}^0,\mathscr{R}_3^0]_{\theta}}
	\\
	&\lesssim (\| u_{\tau,l}\|_{[\mathscr{E}^0,\mathscr{H}_1]_{\theta}}^{{p-1}}  +\| v_l\|_{[\mathscr{E}^0,\mathscr{H}_1]_{\theta}}^{{p-1}} )
	\|u_{\tau}-v\|_{[\mathscr{E}^0,\mathscr{H}_2^0]_{\theta} }
	\\
	&\quad+ (\| u_{\tau,h}\|_{[\mathscr{E}^0,\mathscr{W}_1]_{\theta}}^{{p-1}}  +\| v_h\|_{[\mathscr{E}^0,\mathscr{W}_1]_{\theta}}^{{p-1}} )
	\|u_{\tau}-v \|_{[\mathscr{E}^0,\mathscr{H}_2^0]_{\theta} }
\end{align*}
By the uniform boundedness, we find that $\| u_{\tau,l}\|_{[\mathscr{E}^0,\mathscr{H}_1]_{\theta}}$ and $\| u_{\tau,h}\|_{[\mathscr{E}^0,\mathscr{W}_1]_{\theta}}$ are bounded uniformly in $\tau$. Moreover, $\| v_l\|_{[\mathscr{E}^0,\mathscr{H}_1]_{\theta}}$ and $\| v_h\|_{[\mathscr{E}^0,\mathscr{W}_1]_{\theta}}$ are also bounded. Thus, we have
\begin{align*}
	\| \mathcal{N}(u_{\tau}) - \mathcal{N}(v) \|_{[\mathscr{E}^0,\mathscr{R}_0^0]_{\theta}+[\mathscr{E}^0,\mathscr{R}_3^0]_{\theta}} 
	\lesssim \|u_{\tau}-v \|_{\mathscr{Y}}. 
\end{align*}
This means $\| N_1 \|_{\mathscr{Y}} \cleq T^{1-\theta} \|u_{\tau}-v \|_{\mathscr{Y}}$.

\noindent{\bf The $\mathscr{Y}$-estimate of $N_2$:}
In the same way as the estimate of $L_2$, we devide $N_1$ into the high, middle, and low frequency parts. First, we treat the high frequency part.
\begin{align*}
	&\left\| \int_{0}^{t} \left( \frac{1}{\tau} \mathcal{D}_{\tau}(t-s)  - e^{(t-s)\Delta} \right) \chi_{> (8\tau)^{-1/2} } \mathcal{N}(v) ds \right\|_{\mathscr{Y}}
	\\
	&\lesssim 
	\left\| \int_{0}^{t} \frac{1}{\tau} \mathcal{D}_{\tau}(t-s) \chi_{> (8\tau)^{-1/2} } \mathcal{N}(v) ds \right\|_{\mathscr{Y}}
	 +\left\| \int_{0}^{t}  e^{(t-s)\Delta} \chi_{> (8\tau)^{-1/2} } \mathcal{N}(v) ds \right\|_{\mathscr{Y}}
\end{align*}
By the Strichartz estimate, we have
\begin{align*}
	\left\| \int_{0}^{t} \frac{1}{\tau} \mathcal{D}_{\tau}(t-s) \chi_{>  (8\tau)^{-1/2} } \mathcal{N}(v) ds \right\|_{\mathscr{Y}}
	&\lesssim \tau^{\frac{1}{2}} \||\nabla| \mathcal{N}(v)\|_{[\mathscr{E}^0,\mathscr{R}_0^0]_{\theta}}
	\\
	&\lesssim  \tau^{\frac{1}{2}} \|v\|_{[\mathscr{E}^0,\mathscr{H}_1]_{\theta}}^{p-1}   \| v\|_{[\mathscr{E},\mathscr{H}_2]_{\theta} }
\end{align*}
Moreover, we have
\begin{align*}
	\left\| \int_{0}^{t}  e^{(t-s)\Delta} \chi_{>  (8\tau)^{-1/2} } \mathcal{N}(v) ds \right\|_{\mathscr{Y}}
	&\lesssim \|\chi_{> 1/\sqrt{\tau}} \mathcal{N}(v)\|_{[\mathscr{E}^0,\mathscr{R}_0^0]_{\theta}}
	\\
	&\lesssim \tau^{\frac{1}{2}} \||\nabla| \mathcal{N}(v)\|_{[\mathscr{E}^0,\mathscr{R}_0^0]_{\theta}}
	\\
	&\lesssim  \tau^{\frac{1}{2}} \|v\|_{[\mathscr{E}^0,\mathscr{H}_1]_{\theta}}^{p-1}   \| v\|_{[\mathscr{E},\mathscr{H}_2]_{\theta} }.
\end{align*}

Next, we consider the low frequency part.
The symbol of $\frac{1}{\tau}\mathcal{D}_{\tau}(t) - e^{t\Delta}$ is calculated by
\begin{align*}
	\frac{-1}{\tau(\lambda_{\tau}^{-}- \lambda_{\tau}^{+})} (e^{t\lambda_{\tau}^{+}} - e^{t\lambda_{\tau}^{-}}) - e^{-t|\xi|^2}
	&=\left( \frac{-1}{\tau(\lambda_{\tau}^{-}- \lambda_{\tau}^{+})}-1 \right)e^{t\lambda_{\tau}^{+}}
	\\
	&\quad + e^{t\lambda_{\tau}^{+}} - e^{-t|\xi|^2}
	\\
	&\quad + \frac{1}{\tau(\lambda_{\tau}^{-}- \lambda_{\tau}^{+})}  e^{t\lambda_{\tau}^{-}}
	\\
	&=:H+J+M
\end{align*}
Since we have
\begin{align*}
	& \frac{-1}{\tau(\lambda_{\tau}^{-}- \lambda_{\tau}^{+})}-1 = \frac{4\tau|\xi|^2}{[\xi]_{\tau}(1+[\xi]_{\tau})} \lesssim \tau^{\frac{1}{2}}|\xi|
\end{align*}
when $|\xi| \leq (8\tau)^{-1/2}$, we can estimate $H$ by 
\begin{align*}
	\left\| \int_{0}^{t} H \chi_{\leq (8\tau)^{-1/2}} \mathcal{N}(v) ds \right\|_{\mathscr{Y}}
	\lesssim \tau^{\frac{1}{2}}  \|v\|_{[\mathscr{E}^0,\mathscr{H}_1]_{\theta}}^{p-1}   \| v\|_{[\mathscr{E},\mathscr{H}_2]_{\theta} }
\end{align*}
where we used the inhomogeneous Strichartz estimate for $e^{t\lambda_{\tau}^{\pm}}$. 
In the similar way to the estimate of $L_2$, we obtain
\begin{align*}
	\left\| \int_{0}^{t} J \chi_{\leq (8\tau)^{-1/2}} \mathcal{N}(v) ds \right\|_{\mathscr{Y}}
	\lesssim \tau^{\frac{1}{2}} T \|v\|_{[\mathscr{E}^0,\mathscr{H}_1]_{\theta}}^{p-1}   \| v\|_{[\mathscr{E},\mathscr{H}_2]_{\theta} }.
\end{align*}
We estimate $M$. Since it holds
\begin{align*}
	 \left| \frac{1}{\tau(\lambda_{\tau}^{-}- \lambda_{\tau}^{+})}\right| \lesssim 1,
\end{align*}
it is enough to estimate
\begin{align*}
	\left\| \int_{0}^{t} e^{t\lambda_{\tau}^{-}} \chi_{\leq (8\tau)^{-1/2}} \mathcal{N}(v) ds \right\|_{ \mathscr{Y}}.
\end{align*}
We denote $(q,r)=(\infty,2)$ or $(q,r)= (2(d+2)/d,2(d+2)/d)$. When $1<p\leq d/(d-2)$, we have
\begin{align*}
	\left\| \int_{0}^{t} e^{t\lambda_{\tau}^{-}} \chi_{\leq (8\tau)^{-1/2}} \mathcal{N}(v) ds \right\|_{L_t^qL_x^r}
	&\lesssim  \left\| \int_{0}^{t} e^{-\frac{t-s}{2\tau}} \left\|  \chi_{\leq (8\tau)^{-1/2}} \mathcal{N}(v)\right\|_{L_x^r} ds \right\|_{L_t^q}
	\\
	&\lesssim \tau^{-\frac{d}{2}\left(\frac{1}{2} - \frac{1}{r}\right)} \left\| \int_{0}^{t} e^{-\frac{t-s}{2\tau}} \left\| \mathcal{N}(v)\right\|_{L_x^2} ds \right\|_{L_t^q}
	\\
	&\lesssim \tau^{-\frac{d}{2}\left(\frac{1}{2} - \frac{1}{r}\right)} \left\| \int_{0}^{t} e^{-\frac{t-s}{2\tau}}ds \right\|_{L_t^q}
	 \|v \|_{L_t^\infty H^1}^{p}
	 \\
	&\lesssim \tau^{1-\frac{d}{2}\left(\frac{1}{2} - \frac{1}{r}\right)}T^{\kappa} \|v \|_{L_t^\infty H^1}^{p}
\end{align*}
for a positive constant $\kappa$, where we used the Bernstein inequality and the Sobolev inequality. Since $r=2$ or $r=2(d+2)/d$, we obtain
\begin{align*}
	\tau^{1-\frac{d}{2}\left(\frac{1}{2} - \frac{1}{r}\right)} \lesssim \tau^{\frac{1}{2}}.
\end{align*}
When $d/(d-2) < p < (d+2)/(d-2)$, we set $1/r_0= 1/r - 1/d + (d-2)(p-1)/(2d)$. Then, we have $r > r_0 >1$ and, by the Sobolev inequality, we have $\left\| \mathcal{N}(v)\right\|_{L_x^{r_0}} \lesssim \|v\|_{H^1}^{p-1} \|v\|_{W^{1,r}}$.
\begin{align*}
	\left\| \int_{0}^{t} e^{t\lambda_{\tau}^{-}} \chi_{\leq (8\tau)^{-1/2}} \mathcal{N}(v) ds \right\|_{L_t^qL_x^r}
	&\lesssim  \left\| \int_{0}^{t} e^{-\frac{t-s}{2\tau}} \left\|  \chi_{\leq (8\tau)^{-1/2}} \mathcal{N}(v)\right\|_{L_x^r} ds \right\|_{L_t^q}
	\\
	&\lesssim \tau^{-\frac{d}{2}\left(\frac{1}{r_0} - \frac{1}{r}\right)} \left\| \int_{0}^{t} e^{-\frac{t-s}{2\tau}} \left\| \mathcal{N}(v)\right\|_{L_x^{r_0}} ds \right\|_{L_t^q}
	\\
	&\lesssim \tau^{-\frac{d}{2}\left(\frac{1}{r_0} - \frac{1}{r}\right)} \left\| \int_{0}^{t} e^{-\frac{t-s}{2\tau}} \|v(s)\|_{W^{1,r}}ds \right\|_{L_t^q}
	 \|v \|_{L_t^\infty H^1}^{p-1} 
	 \\
	&\lesssim \tau^{1-\frac{d}{2}\left(\frac{1}{r_0} - \frac{1}{r}\right)} \|v\|_{L_t^q W^{1,r}} \|v \|_{L_t^\infty H^1}^{p-1}
\end{align*}
where we used the Young inequality in the last. Since $p<1+4/(d-2)$, we have
\begin{align*}
	1-\frac{d}{2}\left(\frac{1}{r_0} - \frac{1}{r}\right) >\frac{1}{2}.  
\end{align*}
Therefore, combining these estimates, we have
\begin{align*}
	\left\| \int_{0}^{t} M \chi_{\leq (8\tau)^{-1/2}} \mathcal{N}(v) ds \right\|_{\mathscr{Y}}
	\lesssim \tau^{\frac{1}{2}}  T^{\kappa_0} \|v \|_{L_t^\infty H^1}^{p-1} \|v\|_{[\mathscr{E},\mathscr{H}_2]_{\theta}}.
\end{align*}
for a positive constant $\kappa_0$. Since $\|v \|_{L_t^\infty H^1}$ and $\|v\|_{[\mathscr{E},\mathscr{H}_2]_{\theta}}$ are bounded on $[0,T]$, we obtain $O(\tau^{1/2})$.

Combining these estimates, we obtain
\begin{align*}
	\|N_2\|_{\mathscr{Y}} 
	\lesssim \tau^{\frac{1}{2}} 
\end{align*}

\noindent{\bf Conclusion of the $\mathscr{Y}$-estimate:} 

Thus, we have
\begin{align*}
	\|u_{\tau} - v\|_{\mathscr{Y}[0,T]} 
	\lesssim \|f_{\tau}-f\|_{L^2} + \tau \|g_{\tau}\|_{L^2}  + T^{1-\theta} \|u_{\tau}- v\|_{\mathscr{Y}[0,T]} + \tau^{\frac{1}{2}}
\end{align*}
Taking small $T$, we obtain
\begin{align*}
	\|u_{\tau} - v\|_{\mathscr{Y}[0,T]} 
	\lesssim \|f_{\tau}-f\|_{L^2} + \tau \|g_{\tau}\|_{L^2}  + \tau^{\frac{1}{2}}
\end{align*}
Repeating this, we obtain the estimate for any $T<T^{*}$. 
Therefore, we complete the proof of Theorem \ref{thm1.2}.

\subsection{Compactness method for $H^1$-convergence}

We prove $H^1$-convergence by a compactness method.

We show the following statement. 
\begin{proposition}
\label{prop2.8}
Let $f\in H^1(\mathbb{R}^d)$, $v$ be a solution to \eqref{NLH}, and $T_{\max}$ is the maximal existence time of the solution $v$ to \eqref{NLH}. 
If the initial data $(f_{\tau},g_{\tau}) \in  H^1(\mathbb{R}^d) \times  L^2(\mathbb{R}^d)$ satisfies
\begin{align*}
	(f_{\tau},\tau g_{\tau}) \to (f,0) \text{ in } H^1(\mathbb{R}^d)\times L^2(\mathbb{R}^d) \text{ as } \tau \to 0,
\end{align*}
then we have
\begin{align*}
	\| u_{\tau} - v\|_{L^{\infty}(0,T: H^1(\mathbb{R}^d))} 
	+ \tau^{\frac{1}{2}} \| \partial_t u_{\tau}\|_{L^{\infty}(0,T: L^2(\mathbb{R}^d))}
	\to 0
\end{align*}
as $\tau \to 0$ for any $T<\min\{ T^{*},T_{\max}\}$ . 
\end{proposition}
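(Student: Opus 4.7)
The plan is to bootstrap the $L^2$-convergence of Theorem \ref{thm1.2} up to $H^1$-convergence by combining the uniform Strichartz bounds coming from Theorem \ref{thm1.1} with the frequency-localized nonlinear estimate of Lemma \ref{lem3.8}, in the spirit of \cite{MaNa02}. Writing $w_\tau := u_\tau - v$ and using the Duhamel formula for \eqref{NLDW} together with $v(t) = e^{t\Delta}f + \int_0^t e^{(t-s)\Delta}\mathcal{N}(v)\,ds$, I would decompose
\begin{align*}
w_\tau(t) = L_\tau(t) + \int_0^t \tfrac{1}{\tau}\mathcal{D}_\tau(t-s)\bigl[\mathcal{N}(u_\tau) - \mathcal{N}(v)\bigr]ds + R_\tau(t),
\end{align*}
where $L_\tau$ collects the linear propagator differences applied to the initial data and $R_\tau$ collects $\int_0^t[\tfrac{1}{\tau}\mathcal{D}_\tau(t-s) - e^{(t-s)\Delta}]\mathcal{N}(v)\,ds$.

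For the linear piece $L_\tau$, the convergence $f_\tau \to f$ in $H^1$ and $\tau^{1/2}g_\tau\to 0$ in $L^2$, together with the homogeneous Strichartz estimates for the low- and high-frequency parts of $\mathcal{A}_\tau(t)$, give $\|L_\tau\|_{\mathscr{E}\cap\mathscr{Z}_\theta\cap\mathscr{X}}\to 0$. The argument is exactly the $\mathscr{Y}$-analysis of $L_1,L_2,L_3$ in Section 2.5 lifted to the $H^1$ level: the only substantive change is that $\|(\frac{1}{\tau}\mathcal{D}_\tau + \partial_t\mathcal{D}_\tau - e^{t\Delta})\chi_{>(8\tau)^{-1/2}}f\|_{H^1}$ requires $\|\chi_{>(8\tau)^{-1/2}}f\|_{H^1} = o(1)$, which holds since $f\in H^1$. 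The remainder $R_\tau$ is handled by the same symbol decomposition $I+J+K$ and $H+J+M$ used in Section 2.5, picking up a $\tau^{1/2}$ gain rather than just tending to $0$.

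The main obstacle is the nonlinear difference term, which must be absorbed into the left-hand side in an $H^1$-type norm even though we only know $L^2$-convergence a priori. Here I would apply Lemma \ref{lem3.8}, with $u = v + (u_\tau - v) = a + b$ so that $a = v$ and $b = w_\tau$, taking $X_i,Z_i$ adapted to $\mathscr{X}, \mathscr{Z}_\theta$ as in Section 2.4. Combining with the inhomogeneous Strichartz estimates and the uniform bound $\|u_\tau\|_{\mathscr{X}} + \|v\|_{\mathscr{X}} \leq C$, one obtains a Littlewood-Paley localized inequality of the form
\begin{align*}
\|P_j w_\tau\|_{\mathscr{E}\cap\mathscr{Z}_\theta\cap\mathscr{X}[0,T]} \lesssim \|P_j L_\tau\|_{\cdots} + \|P_j R_\tau\|_{\cdots} + T^{1-\theta}\,\kappa_j *_j \|P_j w_\tau\|_{Z},
\end{align*}
with $\kappa_j$ having unit $\ell^1(\Z)$-mass. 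Choosing $T$ small enough that $T^{1-\theta}$ beats the $\ell^1$-norm of $\kappa_j$, Schur's test lets me sum over $j$ weighted by $2^{j\sigma}$ for $0\le \sigma \le 1$ and close the estimate, yielding $\|w_\tau\|_{\mathscr{E}\cap\mathscr{Z}_\theta\cap\mathscr{X}[0,T]}\to 0$. The key point is that $\kappa_j$ decouples the frequency, so no loss of derivative occurs in the iteration; this is precisely the refinement of \cite{MaNa02} over \cite{MNO02} that makes the argument work without energy conservation. Iterating on time intervals of length $T$ extends the result to any $T<\min\{T^*,T_{\max}\}$, and in particular the standard blow-up alternative for \eqref{NLDW} combined with this uniform $H^1$-control forces $T^* \ge T_{\max}$.

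Finally, the time-derivative bound $\tau^{1/2}\|\partial_t u_\tau\|_{L^\infty L^2}\to 0$ follows once $H^1$-convergence is established. Applying the $\|(\tfrac{1}{\tau}\partial_t\mathcal{D}_\tau + \partial_t^2\mathcal{D}_\tau)\cdot\|_{L_t^\infty L_x^2}$ estimates of Lemmas 2.1--2.4 to the Duhamel formula for $\partial_t u_\tau$, writing $\partial_t u_\tau = \partial_t(u_\tau - v) + \partial_t v$ and using $\partial_t v = \Delta v + \mathcal{N}(v)\in L^\infty_t H^{-1}$, together with the just-proved $H^1$-convergence and the initial data assumption $\tau^{1/2}g_\tau\to 0$, one sees that $\tau^{1/2}\|\partial_t u_\tau\|_{L^\infty L^2}$ is controlled by $o(1)$ plus a small multiple of itself, completing the proof.
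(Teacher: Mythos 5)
Your overall strategy --- a direct contraction for $w_\tau = u_\tau - v$ in the $H^1$-level norms $\mathscr{E}\cap\mathscr{Z}_\theta\cap\mathscr{X}$ --- is not the paper's argument, and it has a genuine gap at its central step. Lemmas \ref{lem3.7} and \ref{lem3.8} estimate $P_j\mathcal{N}(u)$ for a \emph{single} function $u$ decomposed as $u=a+b$; they do not provide a frequency-localized bound for the \emph{difference} $\mathcal{N}(u_\tau)-\mathcal{N}(v)$ of the form $\kappa_j *_j\|P_j w_\tau\|_{Z}$. If you apply Lemma \ref{lem3.8} with $a=v$, $b=w_\tau$, you only control $P_j\mathcal{N}(u_\tau)$, and the right-hand side then contains $\kappa_j *_j\|P_j v\|$, which is bounded but not small, so the inequality you write for $\|P_j w_\tau\|$ cannot be closed to yield $\|w_\tau\|\to 0$. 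A genuine difference estimate at regularity one would require differentiating $\mathcal{N}(u_\tau)-\mathcal{N}(v)=w_\tau\int_0^1\mathcal{N}'(v+\vartheta w_\tau)\,d\vartheta$, and for $p<2$ (which is forced when $d\geq 6$ and allowed in every dimension) $\mathcal{N}'$ is only $(p-1)$-H\"older, so the contribution where the derivative falls on the multiplier is controlled by full, non-small norms of $u_\tau$ and $v$. This derivative loss is precisely why the solution map is not known to be Lipschitz in $H^1$ for such nonlinearities, and why \cite{MNO02,MaNa02} and the present paper resort to a compactness method rather than the contraction you propose.

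The paper's proof instead combines two ingredients: (i) the already-established $L^2$-convergence of Theorem \ref{thm1.2}, where the nonlinear difference is handled by the pointwise bound $|\mathcal{N}(u_\tau)-\mathcal{N}(v)|\lesssim(|u_\tau|^{p-1}+|v|^{p-1})|u_\tau-v|$ and H\"older, with no derivatives involved; and (ii) an application of Lemma \ref{lem3.8} to $\mathcal{N}(u_\tau)$ alone (with $u_\tau$ split into its own low/high frequency parts) which, after convolving with $2^{-|j|/5}$ and absorbing for small $T$, yields the uniform frequency tightness $\left\|\,\|P_j u_\tau\|_{\dot{\mathscr{E}}}\right\|_{l^2(j>J)}\lesssim\varepsilon+\|f_\tau-f\|_{H^1}+\tau^{1/2}\|g_\tau\|_{L^2}$ for $J$ large independent of $\tau$. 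Combining the weak-norm convergence with this tightness through $\|u_\tau-v\|_{\mathscr{E}}\lesssim(1+R)\|u_\tau-v\|_{\mathscr{E}^0}+\|\chi_{>R}u_\tau\|_{\dot{\mathscr{E}}}+\|\chi_{>R}v\|_{\dot{\mathscr{E}}}$ then gives the $H^1$-convergence; your argument would need to be restructured along these lines. Two smaller points: the conclusion $T^{*}\geq T_{\max}$ is deduced \emph{after} the proposition by a separate continuation argument, not inside it; and your treatment of $\tau^{1/2}\partial_t u_\tau$ via $\partial_t v=\Delta v+\mathcal{N}(v)$ does not work, since $\partial_t v$ need not lie in $L^2$ for $v\in H^1$ --- the paper estimates the Duhamel formula for $\partial_t u_\tau$ directly, split into the regions $\chi_{\leq R}$ and $\chi_{>R}$.
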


Once we obtain the above proposition, we can show $T^{*}\geq T_{\max}$ as follows. Suppose that $T^{*} <T_{\max}$. Then by the proposition, it holds for arbitrary sufficiently small $\tau$ that
\begin{align*}
	\| u_{\tau} - v\|_{L^{\infty}(0,T: H^1(\mathbb{R}^d))} 
	+ \tau^{\frac{1}{2}} \| \partial_t u_{\tau}\|_{L^{\infty}(0,T: L^2(\mathbb{R}^d))}
	\lesssim 1
\end{align*}
for any $T<T^{*}$. Therefore, we have
\begin{align*}
	\| u_{\tau} \|_{L^{\infty}(0,T: H^1(\mathbb{R}^d))}+ \tau^{\frac{1}{2}} \| \partial_t u_{\tau}\|_{L^{\infty}(0,T: L^2(\mathbb{R}^d))}
	 \lesssim \|v\|_{L^{\infty}(0,T: H^1(\mathbb{R}^d))} +1
\end{align*}
Since $T^{*} <T_{\max}$, we have $\|v\|_{L^{\infty}(0,T: H^1(\mathbb{R}^d))}+ \tau^{1/2} \| \partial_t u_{\tau}\|_{L^{\infty}(0,T: L^2(\mathbb{R}^d))}< C$. Thus, $\| u_{\tau} \|_{L^{\infty}(0,T: H^1(\mathbb{R}^d))}<C$ for any $T<T^{*}$. This and the blow-up alternative implies that we obtain the solution on $[0,T^{*}+\delta)$ for some $\delta>0$. Since the existence time depends only on the norm, we find that $\delta$ is independent of $\tau$. We reach contradiction.

\begin{proof}[Proof of Proposition {\ref{prop2.8}}]
To show this, we use a compactness argument. Let $\varepsilon>0$ be fixed arbitrarily. If there exists $R_{\varepsilon}>0$ independent of $\tau$ such that $\|\chi_{>R_{\varepsilon}} u_{\tau}\|_{\dot{\mathscr{E}}} \lesssim  \varepsilon + \|f_{\tau}-f\|_{H^1}  +\tau^{1/2} \| g_{\tau}\|_{L^2}$, then by taking $R>R_{\varepsilon}$ such that $\| \chi_{>R} v \|_{\dot{\mathscr{E}}}<\varepsilon$ we have
\begin{align*}
	\| u_{\tau} -v \|_{\mathscr{E}}
	&\leq \| u_{\tau} -v \|_{\mathscr{E}^{0}} + \|  u_{\tau} -v \|_{\dot{\mathscr{E}}}
	\\
	&\leq \| u_{\tau} -v \|_{\mathscr{E}^{0}}+ \| \chi_{\leq R} (u_{\tau} -v) \|_{\dot{\mathscr{E}}} 
	+ \| \chi_{>R}(u_{\tau} -v )\|_{\dot{\mathscr{E}}}
	\\
	&\lesssim  (1+R)\|  u_{\tau} -v \|_{\mathscr{E}^{0}} + 2\varepsilon 
	+ \|f_{\tau}-f\|_{H^1}  +\tau^{\frac{1}{2}} \| g_{\tau}\|_{L^2}
\end{align*} 
By the $L^2$-convergence, we have $\|  u_{\tau} -v \|_{\mathscr{E}^{0}} \to 0$ as $\tau \to 0$. Thus, by the assumption on the initial data, we obtain
\begin{align*}
	\lim_{\tau \to 0} \| u_{\tau} -v \|_{\mathscr{E}}=0.
\end{align*}
By the above argument, it is enough to show that there exists $R_{\varepsilon}>0$ independent of $\tau$ such that $\|\chi_{>R_{\varepsilon}} u_{\tau}\|_{\dot{\mathscr{E}}} \lesssim  \varepsilon + \|f_{\tau}-f\|_{H^1}  +\tau^{1/2} \| g_{\tau}\|_{L^2}$. 

Let $R>0$ and $J \in \mathbb{N}$ satisfy $R>2^{J}$. By the Littlewood--Paley decomposition, we have
 \begin{align*}
 	\| \chi_{>R} u_{\tau} \|_{\dot{\mathscr{E}}}
	\lesssim  \| \| P_j \chi_{>R} u_{\tau} \|_{\dot{\mathscr{E}}} \|_{l_j^2}
	\lesssim \| \| P_j u_{\tau} \|_{\dot{\mathscr{E}}} \|_{l^2(j>J)}.
\end{align*}
Therefore, it is enough to show that for any $\varepsilon>0$ there exists $J\in \mathbb{N}$ independent of $\tau$ such tha $\| \| P_j u_{\tau} \|_{\dot{\mathscr{E}}} \|_{l^2(j>J)}<\varepsilon + \|f_{\tau}-f\|_{H^1}  +\tau^{1/2} \| g_{\tau}\|_{L^2}$. By the Strichartz estimate, we obtain
\begin{align*}
	\| P_j u_{\tau} \|_{\dot{\mathscr{E}}\cap \dot{\mathscr{Z}}_\theta \cap \mathscr{X} }
	&\lesssim \|P_j f_{\tau}\|_{H^1} + \tau^{\frac{1}{2}} \|P_j g_{\tau}\|_{L^2} 
	+ \left\| \int_{0}^{t} \frac{1}{\tau} \mathcal{D}_{\tau} (t-s) P_j \mathcal{N}(u_{\tau}(s))ds  \right\|_{\dot{\mathscr{E}}\cap \dot{\mathscr{Z}}_\theta \cap \mathscr{X}}
	\\
	&\lesssim \|P_j f_{\tau}\|_{H^1} + \tau^{\frac{1}{2}} \|P_j g_{\tau}\|_{L^2} 
	+T^{1-\theta} \| P_j \mathcal{N}(u_{\tau}) \|_{\sum_{i=0}^{3}[\dot{\mathscr{E}}, \dot{\mathscr{R}_{i}}]_{\theta}}.
\end{align*}
By \cite[Lemma]{MaNa02}, we have the following estimate.
\begin{align*}
\| P_j \mathcal{N}(u_{\tau}) \|_{\sum_{i=0}^{3}[\dot{\mathscr{E}}, \dot{\mathscr{R}_{i}}]_{\theta}}
\lesssim 2^{-\frac{|j|}{4}} *_j \| u_{\tau} \|_{\mathscr{X}}^{p-1} \| P_j u_{\tau} \|_{\dot{\mathscr{Z}}_\theta},
\end{align*}
where $*_j$ denotes the convolution over $\mathbb{Z}$.
Thus, we have
\begin{align}
\label{eq3.4}
	\| P_j u_{\tau} \|_{\dot{\mathscr{E}}\cap \dot{\mathscr{Z}}_\theta \cap \mathscr{X} }
	\lesssim \|P_j f_{\tau}\|_{H^1} + \tau^{\frac{1}{2}} \|P_j g_{\tau}\|_{L^2}
	+ T^{1-\theta} 2^{-\frac{|j|}{4}} *_j \| P_j u_{\tau} \|_{\dot{\mathscr{Z}}_\theta}
\end{align}
since $\| u_{\tau} \|_{\mathscr{X}}^{p-1}$ is uniformly bounded. 
By convoluting with $2^{-\frac{|j|}{5}}$, we have
\begin{align*}
	&2^{-\frac{|j|}{5}}*_j\| P_j u_{\tau} \|_{\dot{\mathscr{E}}\cap \dot{\mathscr{Z}}_\theta \cap \mathscr{X} }
	\\
	&\lesssim 2^{-\frac{|j|}{5}}*_j( \|P_j f_{\tau}\|_{H^1} + \tau^{\frac{1}{2}} \|P_j g_{\tau}\|_{L^2} )
	+ T^{1-\theta} 2^{-\frac{|j|}{5}} *_j \| P_j u_{\tau} \|_{\dot{\mathscr{Z}}_\theta}
\end{align*}
since $2^{-\frac{|j|}{5}} *_j 2^{-\frac{|j|}{4}} \lesssim 2^{-\frac{|j|}{5}}$. If $T$ is sufficiently small, we obtain
\begin{align}
\label{eq3.5}
	2^{-\frac{|j|}{5}}*_j\| P_j u_{\tau} \|_{\dot{\mathscr{E}}\cap \dot{\mathscr{Z}}_\theta \cap \mathscr{X} }
	\lesssim 2^{-\frac{|j|}{5}}*_j( \|P_j f_{\tau}\|_{H^1} + \tau^{\frac{1}{2}} \|P_j g_{\tau}\|_{L^2} ).
\end{align}
It follows from $2^{-\frac{|j|}{4}}<2^{-\frac{|j|}{5}}$ and substituting \eqref{eq3.5} into \eqref{eq3.4} that
\begin{align*}
	\| P_j u_{\tau} \|_{\dot{\mathscr{E}}}
	&\lesssim \|P_j f_{\tau}\|_{H^1} + \tau^{\frac{1}{2}} \|P_j g_{\tau}\|_{L^2}
	\\
	&\quad + T^{1-\theta}  2^{-\frac{|j|}{5}}*_j( \|P_j f_{\tau}\|_{H^1} + \tau \|P_j g_{\tau}\|_{L^2} ).
\end{align*}
Taking $l^2$-norm for $j>J$, by $2^{-\frac{|j|}{5}} \in l^1$ and the Young inequality, we have
\begin{align*}
	\| \| P_j u_{\tau} \|_{\dot{\mathscr{E}}} \|_{l^2(j>J)}
	&\lesssim  \| \|P_j f_{\tau}\|_{H^1}\|_{l^2(j>J)} + \tau^{\frac{1}{2}} \| \|P_j g_{\tau}\|_{L^2}\|_{l^2(j>J)}
\end{align*}
By the Littlewood--Paley decomposition, we have
\begin{align*}
	 \| \|P_j f_{\tau}\|_{H^1}\|_{l^2(j>J)}
	 &\leq  \| \|P_j (f_{\tau}-f)\|_{H^1}\|_{l^2(j>J)} +  \| \|P_j f\|_{H^1}\|_{l^2(j>J)}
	 \\
	 &\lesssim \| f_{\tau}-f \|_{H^1} + \|\chi_{>2^{J-2}} f \|_{H^1}
\end{align*}
and 
\begin{align*}
	 \tau^{\frac{1}{2}} \| \|P_j g_{\tau}\|_{L^2}\|_{l^2(j>J)} \lesssim \tau^{\frac{1}{2}} \|g_{\tau}\|_{L^2}.
\end{align*}
Thus, for any $\varepsilon>0$ there exists $J\in \mathbb{N}$ independent of $\tau$ such that
\begin{align*}
	\| \|P_j f_{\tau}\|_{H^1}\|_{l^2(j>J)}
	\lesssim \varepsilon +\| f_{\tau}-f \|_{H^1} + \tau^{\frac{1}{2}} \|g_{\tau}\|_{L^2}
\end{align*}
since $ \|\chi_{>2^{J-2}} f \|_{H^1} < \varepsilon$ for large $J$. We obtain the statement for small $T$. Repeating this argument, we obtain the $H^1$-convergence of the solution. At last, we show $\tau^{1/2}\|\partial_t u_{\tau}\|_{\mathscr{E}^{0}[0,T]}\to 0$ as $\tau \to 0$. First we show that
\begin{align*}
	\tau^{\frac{1}{2}} \|\chi_{\leq R} \partial_t u_{\tau}\|_{\mathscr{E}^0[0,T]} \to 0
\end{align*}
as $\tau \to 0$ for arbitrary fixed $R>0$. By the Strichartz estimates, we have
\begin{align*}
	&\tau^{\frac{1}{2}}\left\| \left(\frac{1}{\tau} \partial_t \mathcal{D}_{\tau} +  \partial_t^2 \mathcal{D}_{\tau} \right) \chi_{\leq R} f_{\tau}\right\|_{\mathscr{E}^0[0,T]}
	\\
	&\lesssim \tau^{\frac{1}{2}} R \|f_{\tau} - f\|_{H^1} + \tau^{\frac{1}{2}} \left\| \left(\frac{1}{\tau} \partial_t \mathcal{D}_{\tau} +  \partial_t^2 \mathcal{D}_{\tau} \right) \chi_{\leq R} f\right\|_{\mathscr{E}^0[0,T]}
	\\
	&\lesssim  \tau^{\frac{1}{2}} R \|f_{\tau} - f\|_{H^1} + \tau^{\frac{1}{2}} R \| f\|_{H^1}
\end{align*}
and 
\begin{align*}
	\|\partial_t \mathcal{D}_{\tau} \chi_{\leq R} g_{\tau}\|_{\mathscr{E}^0[0,T]} 
	\lesssim \tau^{\frac{1}{2}} \|g_{\tau}\|_{L^2}.
\end{align*}
We also have the following estimate for the inhomogeneous term.
\begin{align*}
	&\left\| \int_0^t \frac{1}{\tau} \partial_t \cD_{\tau}(t-s) \chi_{\leq R} \mathcal{N}(u_{\tau}(s)) ds \right\|_{L_t^{\infty}L_x^2} 
	\\
	&\lesssim \int_{0}^{t} \||\xi|^2 e^{(t-s)\lambda_{\tau}^{+}}\chi_{\leq R} \mathcal{N}(u_{\tau}(s)) \|_{L^2} ds
	\\
	&\quad +\int_{0}^{t} \|\tau^{-1} e^{(t-s)\lambda_{\tau}^{-}}\chi_{\leq R} \mathcal{N}(u_{\tau}(s)) \|_{L^2} ds
\end{align*}
The second term of the right hand side is calculated as follows. 
\begin{align*}
	\int_{0}^{t} \|\tau^{-1} e^{(t-s)\lambda_{\tau}^{-}}\chi_{\leq R} \mathcal{N}(u_{\tau}(s)) \|_{L^2} ds
	&\lesssim \tau^{-1} \int_{0}^{t}  e^{-\frac{t-s}{\tau}} ds  \|\chi_{\leq R} \mathcal{N}(u_{\tau}) \|_{L_t^{\infty} L_x^2}
	\\
	&\lesssim  \|\chi_{\leq R} \mathcal{N}(u_{\tau}) \|_{L_t^{\infty} L_x^2}
\end{align*}
By the Bernstein inequality and the Sobolev inequality, we have
\begin{align*}
	 \| \chi_{\leq R} \mathcal{N}(u_{\tau}(s)) \|_{L^2} 
	& \lesssim R^{d \left(\frac{1}{r_0} - \frac{1}{2}\right)} \| \chi_{\leq R} \mathcal{N}(u_{\tau}(s)) \|_{L^{r_0}}
	\\
	& \lesssim  R^{d\left(\frac{1}{r_0} - \frac{1}{2}\right)} \| u_{\tau} \|_{L^{r_0p}}^{p}
	\\
	&  \lesssim  R^{d\left(\frac{1}{r_0} - \frac{1}{2}\right)} \| u_{\tau} \|_{H^1}^{p}
\end{align*}
where we set
\begin{align*}
	\frac{1}{r_0} =
	\begin{cases}
	\frac{1}{2} & \text{ if } p\leq \frac{d}{d-2},
	\\
	\frac{(d-2)p}{2d}& \text{ if } p> \frac{d}{d-2}.
	\end{cases}
\end{align*}
Since the first term can be estimated by $|\xi|^2 \leq R^2$ and the similar argument, we obtain
\begin{align*}
	\tau^{\frac{1}{2}}\left\| \int_0^t \frac{1}{\tau} \partial_t \cD_{\tau}(t-s) \chi_{\leq R} \mathcal{N}(u_{\tau}(s)) ds \right\|_{L_t^{\infty}L_x^2}
	\lesssim \tau^{\frac{1}{2}} \left(R^2 T + R^{d\left(\frac{1}{r_0} - \frac{1}{2}\right)}\right) \|u_{\tau}\|_{H^1}^{p}.
\end{align*}
Combining the above estimates, we obtain $\tau^{1/2} \|\chi_{\leq R} \partial_t u_{\tau}\|_{\mathscr{E}^0[0,T]} \to 0$ as $\tau \to 0$.

Next,  in order to find $R_{\varepsilon}>0$ independent of $\tau$ such that $\tau^{1/2} \|\chi_{> R_{\varepsilon}} \partial_t u_{\tau}\|_{\mathscr{E}^0[0,T]} < \varepsilon + \|f_{\tau} - f\|_{H^1} + \tau^{1/2} \|g_{\tau}\|$, it is enough to do the similar argument as above for $\|\chi_{>R_{\varepsilon}} u_{\tau}\|_{\dot{\mathscr{E}}}$. 
%
We finish the proof.
\end{proof}


\subsection{The case of $d=1,2$}
\label{sec2.5}
We give the proofs in the cases of $d=1,2$. 

\subsubsection{The case of $d=1$.} 
By the Sobolev embedding $L^\infty \supset H^1$, we have 
\begin{align*}
	\|u_{\tau}\|_{L^\infty H^1}
	&\lesssim \|f_{\tau}\|_{H^1} +\tau^{\frac{1}{2}} \|g_{\tau}\|_{L^2} + T \| \mathcal{N}(u_{\tau}) \|_{L^\infty H^1}
	\\
	&\lesssim \|f_{\tau}\|_{H^1} +\tau^{\frac{1}{2}} \|g_{\tau}\|_{L^2} + T\|u_{\tau}\|_{L^\infty L^\infty}^{p-1} \| u_{\tau} \|_{L^\infty H^1}
	\\
	&\lesssim \|f_{\tau}\|_{H^1} +\tau^{\frac{1}{2}} \|g_{\tau}\|_{L^2} + T \| u_{\tau} \|_{L^\infty H^1}^{p}
\end{align*}
Therefore, we obtain the uniform boundedness. 
We can calculate $\|u_{\tau} -v\|_{L^\infty L^2}$ in the similar way to the case of $d \geq 3$. Since we do not need to calculate $\mathscr{H}_2^0$-norm, the difference are estimated much easier than the case of $d \geq 3$. The $H^1$-convergence is directly shown without a compactness method. We use the Parseval equality and the convergence of the symbols and then apply the Lebesgue dominated convergence theorem. 

\subsubsection{The case of $d=2$.} 
By the Strichartz estimates, we obtain
\begin{align*}
	\|u_{\tau}\|_{L^\infty H^1}
	&\lesssim \|f_{\tau}\|_{H^1} +\tau^{\frac{1}{2}} \|g_{\tau}\|_{L^2} +  \| \mathcal{N}(u_{\tau}) \|_{L^{\tilde{q}'} B_{\tilde{r}',2}^{1}},
\end{align*}
where $1/\tilde{q}=1/2-1/\tilde{r}$, $2<\tilde{r}<2/(2-p)$ if $1<p<2$, and $2<\tilde{r}<\infty$ if $p\geq 2$. 
Then, by the H\"{o}lder inequality, we have
\begin{align*}
	\| \mathcal{N}(u) \|_{L^{\tilde{q}'} B_{\tilde{r}',2}^{1}}
	\lesssim \|u_{\tau}\|_{L^{\tilde{q}'(p-1)} L^{r (p-1)}}^{p-1} \| u_{\tau} \|_{L^\infty H^1},
\end{align*}
where $1/r=1/2-1/\tilde{r}$. Since $2<\tilde{r}<2/(2-p)$ if $1<p<2$, and $2<\tilde{r}<\infty$ if $p\geq 2$, we have $r \in (2,\infty)$. Thus, by the Sobolev embedding $L^r \supset H^1$, we obtain
\begin{align*}
	\| \mathcal{N}(u) \|_{L^{\tilde{q}'} B_{\tilde{r}',2}^{1}}
	&\lesssim \|u_{\tau}\|_{L^{\tilde{q}'(p-1)}H^1}^{p-1} \| u_{\tau} \|_{L^\infty H^1}
	\\
	&\lesssim T^{1-\frac{1}{\tilde{q}'}} \|u_{\tau}\|_{L^{\infty}H^1}^{p},
\end{align*}
where $1-1/\tilde{q}'>0$. Therefore, we obtain the uniform boundedness.
If $T$ is sufficiently small, we obtain the uniform boundedness. For the estimate of the difference, we use $L^2$-norm.
The $L^2$-convergence and $H^1$-convergence can be shown in the same method as in the case of $d=1$. 


\section{Global $\dot{H}^1$-convergence}

In this section, we prove Theorems \ref{thm1.4} and \ref{cor1.5}. 

In the case of $d \geq 3$, we set 
\begin{align*}
	\mathscr{V}_{\eta}&:=[\mathscr{H}_2^0,\mathscr{H}_1]_{\eta} | [\mathscr{H}_2^0,\mathscr{W}_1]_{\eta}
	\\
	\mathscr{Z}&:=\mathscr{Z}_1 = \mathscr{H}_2 | \mathscr{W}_2 \cap \mathscr{D}_1
\end{align*}
where $p=\eta p_1 + (1-\eta)p_0$, $p_0:=1+4/d$, and $p_1:=1+4/(d-2)$.

In the case of $d=1,2$, we use
\begin{align*}
	\mathscr{V}:=L_{t,x}^{\frac{(d+2)(p-1)}{2}}, 
	\qquad \mathscr{U}:= \mathscr{H}_2 | \mathscr{D}_1
\end{align*}
instead of $\mathscr{V}_{\eta}$ and $\mathscr{Z}$.

\begin{proof}[Proof of Theorem \ref{thm1.4}]
First, we consider the case of $d\geq 3$. 
Assume that $v$ is global and decay to $0$, that is, for any $\varepsilon>0$, there exists $T_{\varepsilon}>0$ such that 
\begin{align*}
	\|v(t)\|_{H^1} \leq \varepsilon
\end{align*}
for any $t \geq T_{\varepsilon}$. 

We set
\begin{align*}
	\mathcal{A}_{\tau}^1(t)(f_{\tau},g_{\tau})
	:=\left( \frac{1}{\tau} \mathcal{D}_{\tau}(t) + \partial_t \mathcal{D}_{\tau}(t) \right)f_{\tau} 
	+ \mathcal{D}_{\tau}(t) g_{\tau}
\end{align*}

By the Duhamel formula and Lemma \ref{lem3.7}, we have
\begin{align*}
	&\left\|u_{\tau}(t) -\mathcal{A}_{\tau}^1(t-T_{\varepsilon}) (u_{\tau}(T_{\varepsilon}), \partial_t u_{\tau}(T_{\varepsilon}) )\right\|_{\mathscr{Z} \cap \mathscr{V}_{\eta}(T_{\varepsilon},T)}
	\\
	&\lesssim \left\|\int_{T_{\varepsilon}}^{t} \frac{1}{\tau} \mathcal{D}_{\tau}(t-s) \mathcal{N}(u_{\tau}(s))ds\right\|_{\mathscr{Z} \cap \mathscr{V}_{\eta}(T_{\varepsilon},T)}
	\\
	&\lesssim \|u_{\tau}\|_{\mathscr{V}_{\eta}(T_{\varepsilon},T)}^{p-1} \|u_{\tau}\|_{\mathscr{Z}(T_{\varepsilon},T)},
\end{align*}
for $T>T_{\varepsilon}$. 
Now, we also have
\begin{align*}
	&\left\| \mathcal{A}_{\tau}^1(t-T_{\varepsilon}) (u_{\tau}(T_{\varepsilon}), \partial_t u_{\tau}(T_{\varepsilon}) ) \right\|_{\mathscr{Z} \cap \mathscr{V}_{\eta}(T_{\varepsilon},T)}
	\\
	&\lesssim \|u_{\tau}(T_{\varepsilon})-v(T_{\varepsilon})\|_{H^1} + \|v(T_{\varepsilon})\|_{H^1} + \tau^{\frac{1}{2}} \| \partial_t u_{\tau}(T_{\varepsilon})\|_{L^2}.
\end{align*}
Therefore, if $\tau$ is sufficiently small, we obtain
\begin{align}
\label{eq3.1}
	\|u_{\tau}\|_{\mathscr{Z} \cap \mathscr{V}_{\eta}(T_{\varepsilon},T)}
	\lesssim \varepsilon +  \|u_{\tau}\|_{\mathscr{V}_{\eta}(T_{\varepsilon},T)}^{p-1} \|u_{\tau}\|_{\mathscr{Z}(T_{\varepsilon},T)}
\end{align}
since $\lim_{\tau \to 0} ( \|u_{\tau}(T_{\varepsilon})-v(T_{\varepsilon})\|_{H^1} + \tau^{\frac{1}{2}} \| \partial_t u_{\tau}(T_{\varepsilon})\|_{L^2})=0$. For any fixed small $\tau$, by taking $T=T(\tau)$ sufficiently close to $T_{\varepsilon}$, we obtain 
\begin{align}
\label{eq3.2}
	\|u_{\tau}\|_{\mathscr{Z} \cap \mathscr{V}_{\eta}(T_{\varepsilon},T)} \lesssim \varepsilon.
\end{align}
By \eqref{eq3.1}, \eqref{eq3.2}, and the bootstrap argument, we have
\begin{align}
\label{eq0}
	\|u_{\tau}\|_{\mathscr{Z} \cap \mathscr{V}_{\eta}(T_{\varepsilon},\infty)} \lesssim \varepsilon
\end{align}
for any small $\tau$. By the Strichartz estimate, we obtain
\begin{align*}
	&\|u_{\tau}\|_{L^\infty(T_{\varepsilon},\infty:H^1)} 
	\\
	&\lesssim \|u_{\tau}(T_{\varepsilon})-v(T_{\varepsilon})\|_{H^1} + \|v(T_{\varepsilon})\|_{H^1} + \tau^{\frac{1}{2}} \| \partial_t u_{\tau}(T_{\varepsilon})\|_{L^2} +  \|u_{\tau}\|_{\mathscr{Z} \cap \mathscr{V}_{\eta}(T_{\varepsilon},\infty)}^{p} 
	\\
	&\lesssim \varepsilon.
\end{align*}
This shows that
\begin{align*}
	\|u_{\tau} - v\|_{L^\infty(0,\infty:H^1)}
	&\leq \|u_{\tau} - v\|_{L^\infty(0,T_{\varepsilon}:H^1)} + \|u_{\tau} - v\|_{L^\infty(T_{\varepsilon},\infty:H^1)}
	\\
	&\lesssim   \|u_{\tau} - v\|_{L^\infty(0,T_{\varepsilon}:H^1)}  + \varepsilon
\end{align*}
for small $\tau$. Thus, we have
\begin{align*}
	\lim_{\tau \to 0} \|u_{\tau} - v\|_{L^\infty(0,\infty:H^1)}=0
\end{align*}
by Theorem \ref{thm1.3}. 

In the case of $d=1,2$, by the Strichartz estimates and the nonlinear estimate in Lemma \ref{lem3.7}, we have
\begin{align*}
	\left\| \mathcal{A}_{\tau}^1(t) (f,g) \right\|_{\mathscr{U} \cap \mathscr{V}_{\eta}}
	\lesssim \|f\|_{H^1} + \tau^{\frac{1}{2}} \| g\|_{L^2}.
\end{align*}
and 
\begin{align*}
	\left\|\int_{T}^{t} \frac{1}{\tau} \mathcal{D}_{\tau}(t-s) \mathcal{N}(u_{\tau}(s))ds\right\|_{\mathscr{U} \cap \mathscr{V}(T,\infty)}
	&\lesssim \|  \mathcal{N}(u_{\tau})\|_{\mathscr{R}_0 + \mathscr{R}_1(T,\infty)}
	\\
	&\lesssim \|u_{\tau}\|_{\mathscr{V}(T,\infty)}^{p-1} \|u_{\tau}\|_{\mathscr{U}(T,\infty)},
\end{align*}
since we have
\begin{align*}
	\mathscr{R}_0 = \mathscr{V}^{p-1} \mathscr{H}_2, 
	\qquad  \mathscr{R}_1 = \mathscr{V}^{p-1} \mathscr{D}_1.
\end{align*}
Therefore, we obtain the desired statement in the same way as in the higher dimensional case. 

We also have the statement for $\partial_t u_{\tau}$. Indeed, it holds from the Strichartz estimates that
\begin{align*}
	&\tau^{\frac{1}{2}} \|\partial_t u_{\tau}\|_{L_t^\infty L_x^2(T_{\varepsilon},\infty)} 
	\\
	&\lesssim \|u_{\tau}(T_{\varepsilon}) - v(T_{\varepsilon})\|_{H^1} + \|v(T_{\varepsilon})\| +\tau^{\frac{1}{2}}\|\partial_t u_{\tau}(T_{\varepsilon})\|_{L^2}
	+  \|u_{\tau}\|_{\mathscr{Z} \cap \mathscr{V}_{\eta}(T_{\varepsilon},\infty)}^{p} 
\end{align*}
if $d\geq 3$. 
When $d=1,2$, we have the similar estimate. Thus, we obtain the global convergence for $\partial_t u_{\tau}$.
\end{proof}

\begin{proof}[Proof of Thorem \ref{cor1.5}]
We show that $\lim_{t\to \infty} (t^{\frac{1}{2}}\| u_{\tau}(t)\|_{\dot{H}^1})=0$ uniformly in $\tau$. 

We consider the case of $d \geq 3$. Let $T>0$. 
Since $(t-T)^{\frac{1}{2}} \lesssim (t-s)^{\frac{1}{2}} + (s-T)^{\frac{1}{2}}$ for $t\geq s \geq T$, we have
\begin{align*}
	\|(t-T)^{\frac{1}{2}}u_{\tau}\|_{ \dot{\mathscr{Z}}(T,\infty)}
	&\lesssim  \| u_{\tau}(T)\|_{L^2} + \tau^{\frac{1}{2}} \| \partial_t u_{\tau}(T)\|_{L^2}
	\\
	&\quad +\left\|  \int_{T}^{t} \frac{1}{\tau}  (t-s)^{\frac{1}{2}} \mathcal{D}_{\tau}(t-s) \mathcal{N}(u_{\tau}(s))ds  \right\|_{ \dot{\mathscr{Z}}}
	\\
	&\quad +\left\|\int_{T}^{t} \frac{1}{\tau} \mathcal{D}_{\tau}(t-s) (s-T)^{\frac{1}{2}}\mathcal{N}(u_{\tau}(s))ds  \right\|_{ \dot{\mathscr{Z}}}
	\\
	&\lesssim  \varepsilon
	+\|\mathcal{N}(u_{\tau})\|_{\mathscr{R}_0^0 + \mathscr{R}_3^0(T,\infty)} 
	+\|(t-T)^{\frac{1}{2}}\mathcal{N}(u_{\tau})\|_{ \dot{\mathscr{R}}_0 + \dot{\mathscr{R}}_3(T,\infty)}
	\\
	&\lesssim \varepsilon +  \|u_{\tau}\|_{\mathscr{V}_{\eta}(T,\infty)}^{p-1} \|u_{\tau}\|_{\mathscr{Z}^0(T,\infty)} 
	\\
	& \quad +\|u_{\tau}\|_{\mathscr{V}_{\eta}(T,\infty)}^{p-1} \|(t-T)^{\frac{1}{2}}u_{\tau}\|_{ \dot{\mathscr{Z}}(T,\infty)}.
\end{align*}
Note that we use the fact that the spatial derivative implies the time decay $t^{-1/2}$ in the linear part of the first and second inequalities (see \cite{Inu19} for example). Combining \eqref{eq0} with the above inequality, we obtain
\begin{align*}
	\|(t-T)^{\frac{1}{2}}u_{\tau}\|_{ \dot{\mathscr{Z}}(T,\infty)} \lesssim  \varepsilon
\end{align*}
for large $T$ independent of $\tau$. 
By the Strichartz estimate, we have
\begin{align*}
	(t-T)^{\frac{1}{2}} \|u_{\tau}(t)\|_{\dot{H}^1}
	&\lesssim  \| u_{\tau}(T)\|_{L^2} + \tau^{\frac{1}{2}} \| \partial_t u_{\tau}(T)\|_{L^2}  +  \|u_{\tau}\|_{\mathscr{V}_{\eta}(T,\infty)}^{p-1} \|u_{\tau}\|_{\mathscr{Z}^0(T,\infty)} 
	\\
	& \quad +\|u_{\tau}\|_{\mathscr{V}_{\eta}(T,\infty)}^{p-1} \|(t-T)^{\frac{1}{2}}u_{\tau}\|_{ \dot{\mathscr{Z}}(T,\infty)}
	\\
	&\cleq \varepsilon.
\end{align*}
Thus, it holds that
\begin{align*}
	t^{\frac{1}{2}} \|u_{\tau}(t)\|_{\dot{H}^1} 
	&\lesssim T^{\frac{1}{2}} \|u_{\tau}(t)\|_{\dot{H}^1} + (t-T)^{\frac{1}{2}} \|u_{\tau}(t)\|_{\dot{H}^1}
	\\
	&\lesssim T^{\frac{1}{2}} \|u_{\tau}(t)\|_{\dot{H}^1} + \varepsilon
\end{align*}
for $t>T$. Since $\lim_{t\to \infty}\|u_{\tau}(t)\|_{\dot{H}^1} =0$ uniformly in $\tau$, we obtain the desired decay. %
%
The convergence for $\tau$ immediately follows from the same decay estimate of the solution $v$ to \eqref{NLH}. 

In the case of $d=1,2$, by using the function spaces $\mathscr{V}$ and $\mathscr{U}$ instead of $\mathscr{V}_{\eta}$ and $\mathscr{Z}$, respectively, we obtain the desired statement in the same way as above. 
The proof is completed. 
\end{proof}


\appendix
\section{Some lemmas}
\label{appA}

\subsection{$L^\infty L^2$-$L^qL^r$ estimate and $L^qL^r$-$L^1 L^2$ estimate}

\begin{lemma}[$L^\infty L^2$-$L^qL^r$ estimate]

Let $\sigma\geq 0$, $2 \leq \tilde{r} < \infty$, and  $1\leq \tilde{q} \leq  \infty$. 
Assume that they satisfy 
\begin{align*}
	\frac{d}{2}\l( \frac{1}{2} - \frac{1}{\tilde{r}}\r)
	=\frac{1}{\tilde{q}},
\end{align*}
Then it holds that 
\begin{align*}
	\norm{ \jbra{\nabla}^{\sigma} \int_{0}^{t} \mathcal{D}_1(t-s) \chi_{\leq 1} F(s) ds}_{L^{\infty}(I:L^2(\R^d))}
	&\cleq \norm{F}_{L^{\tilde{q}'}(I:L^{\tilde{r}'}(\R^d))},
	\\
	\norm{\int_{0}^{t} \partial_t \mathcal{D}_1(t-s) \chi_{\leq 1} F(s) ds}_{L^{\infty}(I:L^2(\R^d))}
	&\cleq \norm{F}_{L^{\tilde{q}'}(I:L^{\tilde{r}'}(\R^d))},
\end{align*}
where $I=[0,T)$ 
and the implicit constant is independent of $T$. 
\end{lemma}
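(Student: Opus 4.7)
The plan is a standard $TT^\ast$/duality reduction to the homogeneous low-frequency Strichartz estimate already proved in the paper, specialised to $\tau=1$. A direct inspection of the symbol $\widehat{\cD_\tau}(t)$ --- which in the high-frequency regime equals $e^{-t/(2\tau)}\sin(\beta t)/\beta$ with $\beta=\sqrt{4\tau|\xi|^2-1}/(2\tau)$, and which is manifestly real in the low-frequency regime --- shows that $\cD_1(t)$, $\partial_t\cD_1(t)$, and $\chi_{\le 1}$ are all real-valued Fourier multipliers, hence self-adjoint on $L^2$.

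I would first treat $\sigma=0$. Fix $t\in I$ and pair against a test function $\phi\in L^2$; transposing the self-adjoint multipliers gives
\[
\Bigl\langle \int_0^t \cD_1(t-s)\chi_{\le1}F(s)\,ds,\phi\Bigr\rangle
=\int_0^t\bigl\langle F(s),\chi_{\le1}\cD_1(t-s)\phi\bigr\rangle\,ds.
\]
H\"older in $(s,x)$ with exponents $(\tilde q',\tilde r')$ against $(\tilde q,\tilde r)$, together with the time reversal $u=t-s$ and the monotonicity $(0,t)\subset I$, reduces the right-hand side to a product of $\|F\|_{L^{\tilde q'}(I;L^{\tilde r'})}$ and
\[
\|\cD_1(\cdot)\chi_{\le1}\phi\|_{L^{\tilde q}(I;L^{\tilde r})}\cleq \|\phi\|_{L^2},
\]
which is exactly the homogeneous low-frequency estimate at $\tau=1$ under the heat admissibility $\tfrac{d}{2}(1/2-1/\tilde r)=1/\tilde q$. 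Taking the supremum first over $\|\phi\|_{L^2}\le 1$ and then over $t\in I$ yields the required $L^\infty(I;L^2)$ bound.

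The derivative factor $\jbra{\nabla}^\sigma$ is absorbed as follows: since $\chi_{\le1}$ has Fourier support in $\{|\xi|\le 2\}$, the symbol $\jbra{\xi}^\sigma\chi_{\le1}(\xi)$ is smooth and compactly supported, so $\jbra{\nabla}^\sigma\chi_{\le1}(\nabla)$ is bounded on $L^2$; commuting $\jbra{\nabla}^\sigma$ past $\cD_1(t-s)$ and $\chi_{\le 1}$ and transferring it onto the test function $\phi$ in the duality pairing reduces to the $\sigma=0$ case. The second estimate, with $\partial_t\cD_1$ in place of $\cD_1$, is handled identically, using the companion homogeneous bound $\|\partial_t\cD_1(\cdot)\chi_{\le1}\phi\|_{L^{\tilde q}L^{\tilde r}}\cleq\|\phi\|_{L^2}$ that is also supplied earlier in the paper.

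The main obstacle I anticipate is only endpoint bookkeeping. At $\tilde q=\infty$ (which forces $\tilde r=2$) the statement reduces to Minkowski together with $L^2$-boundedness of $\cD_1(t-s)$, while at $\tilde q=1$ one must invoke the homogeneous estimate at the dual exponent $(q,r)=(\infty,\tilde r)$; both cases are permitted because the homogeneous lemma only requires the inequality $\tfrac{d}{2}(1/2-1/r)\ge 1/q$ rather than equality. No Christ--Kiselev argument is needed, since the target norm is $L^\infty$ in time and the restriction of the $s$-integral to $(0,t)$ is handled by simple monotonicity before the supremum in $t$.
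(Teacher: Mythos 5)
Your duality reduction is sound for $\tilde q\ge 2$, but it does not prove the lemma in the stated generality, and the paper's own proof is structured precisely to avoid the obstruction you run into. After transposing the (indeed self-adjoint, real-symbol) multipliers and applying H\"older, you need
\[
\norm{\cD_1(\cdot)\chi_{\le 1}\phi}_{L^{\tilde q}((0,t);L^{\tilde r})}\cleq\norm{\phi}_{L^2},
\]
i.e.\ the homogeneous low-frequency estimate at the exponent pair $(\tilde q,\tilde r)$ itself, not at its dual. The homogeneous lemma in the paper only covers $q\in[2,\infty]$, while the statement here allows $1\le\tilde q\le\infty$; under the admissibility equality one has $\tilde q<2$ as soon as $d\ge 3$ and $\tilde r>2d/(d-2)$ (and $\tilde q=1$ is attained for $d\ge 5$ with $\tilde r=2d/(d-4)$). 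In that range the homogeneous bound you need is not merely unavailable but false for general $L^2$ data: each Littlewood--Paley piece $\phi_j$ contributes $\approx\norm{\phi_j}_{L^2}$ to $\norm{e^{t\Delta}\phi}_{L^q_tL^r_x}$ on the time scale $t\sim 2^{-2j}$, so for $q<2$ the estimate requires $\dot B^0_{2,1}$ data and the $\ell^2$ sum diverges (take $N$ blocks of equal $L^2$ mass to see a loss of $N^{1/q-1/2}$). Your endpoint remark compounds the problem: at $\tilde q=1$ H\"older pairs $F\in L^{\infty}_sL^{\tilde r'}_x$ against $\cD_1(t-\cdot)\phi$ in $L^{1}_sL^{\tilde r}_x$, so the exponent you must invoke is $(q,r)=(1,\tilde r)$, not $(\infty,\tilde r)$; the ``inequality rather than equality'' clause in the homogeneous lemma does not rescue a value of $q$ below $2$.

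This is exactly why the paper proves the lemma by a direct frequency-localized computation rather than by duality: on each block $P_j$ with $j\le -2$ the multiplier is dominated by $e^{-c(t-s)2^{2j}}$, Young's inequality in the time variable yields the factor $\norm{e^{-ct2^{2j}}}_{L^{\tilde q}_t}\approx 2^{-2j/\tilde q}$ for \emph{every} $\tilde q\in[1,\infty]$, the blocks $j=-1,0,1$ are handled by the uniform exponential decay $e^{-c(t-s)}$, and the dyadic weights are then resummed through Minkowski (using $\tilde q'\le 2$) and the Sobolev embedding $\dot B^{0}_{\tilde r',2}\hookrightarrow\dot B^{-2/\tilde q}_{2,2}$. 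The exponential time decay of each fixed-frequency piece is what buys the extended range $1\le\tilde q<2$, and it is invisible to the crude H\"older-plus-homogeneous-estimate composition. If you restrict to $\tilde q\ge 2$ (which is automatic for $d\le 2$ and covers the exponent pairs actually used in the body of the paper), your argument is correct and shorter; to recover the full statement you would need either the paper's blockwise argument or a genuinely non-diagonal inhomogeneous Strichartz input.
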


\begin{proof}
For simplicity, we set
\begin{align*}
	I(F)=I(F,\sigma):=\jbra{\nabla}^{\sigma} \int_{0}^{t} \mathcal{D}(t-s) \chi_{\leq 1} F(s) ds.
\end{align*}
We have
\begin{align*}
	\norm{P_j I(F)}_{L^2(\R^d)}
	&=\norm{ \int_{0}^{t}  P_j \jbra{\nabla}^{\sigma}\cD(t-s) \chi_{\leq 1}F(s) ds}_{L^2(\R^d)}
	\\
	&=\norm{ \int_{0}^{t}  P_j \jbra{\xi}^{\sigma} e^{-\frac{t-s}{2}}  L(t-s,\xi) \chi_{\leq 1} \hat{F}(s) ds}_{L_{\xi}^2(\R^d)}.
\end{align*}
If $j \geq 2$, then $P_j \chi_{\leq 1}=0$. The cases $j=-1,0,1$ are treated later. When $j \leq -2$, we have
\begin{align*}
	 P_j L(t-s,\xi)= P_j \frac{\sinh((t-s) \sqrt{1/4-|\xi|^2})}{\sqrt{1/4-|\xi|^2}}.
\end{align*}
Therefore, we have
\begin{align*}
	| P_j \jbra{\xi}^{\sigma} e^{-\frac{t-s}{2}}  L(t-s,\xi) \chi_{\leq 1}|
	&\cleq P_j e^{-\frac{t-s}{2}}|\sinh((t-s) \sqrt{1/4-|\xi|^2})| \chi_{\leq 1}
	\\
	&\cleq  P_j e^{-\frac{t-s}{2}} e^{(t-s)\sqrt{1/4-|\xi|^2}}\chi_{\leq 1}
	\\
	&\cleq P_j e^{-2(t-s)|\xi|^2}\chi_{\leq 1}
	\\
	&\cleq P_j e^{-2^{-1}(t-s)2^{2j}}\chi_{\leq 1}.
\end{align*}
By the Young inequality, we obtain
\begin{align*}
	&\norm{ \int_{0}^{t} P_j \jbra{\xi}^{\sigma} e^{-\frac{t-s}{2}}  L(t-s,\xi) \chi_{\leq 1} \hat{F}(s) ds}_{L_{\xi}^2(\R^d)}
	\\
	&\cleq \norm{ \int_{0}^{t} P_j e^{-2^{-1}(t-s)2^{2j}}\chi_{\leq 1}\hat{F}(s) ds}_{L_{\xi}^2(\R^d)}
	\\
	&\cleq \norm{e^{-2^{-1}t2^{2j}} }_{L^{\tilde{q}}} \norm{ P_j  \chi_{\leq 1}\hat{F}}_{L^{\tilde{q}'}L_{\xi}^2(\R^d)}
	\\
	&\cleq 2^{-2j/\tilde{q} } \norm{ P_j  \chi_{\leq 1}F}_{L^{\tilde{q}'} L^2(\R^d)},
\end{align*}
where $1\leq \tilde{q}\leq \infty$. 

In the case of $j=0,1$, we have
\begin{align*}
	 \left| e^{-\frac{t-s}{2}} P_j L(t-s,\xi) \right|= \left| e^{-\frac{t-s}{2}} P_j \frac{\sin((t-s) \sqrt{|\xi|^2-1/4})}{\sqrt{|\xi|^2-1/4}}\right| 
	 \cleq  e^{-\frac{t-s}{2}} (t-s) \in L^{\tilde{q}}_{s}(0,t)
\end{align*}
and thus we get 
\begin{align*}
	\norm{P_j I(F)}_{L^2(\R^d)} \cleq  \norm{ P_j  \chi_{\leq 1} F}_{L^{\tilde{q}'} L^2(\R^d)}
\end{align*}
for $j=0,1$. 

In the case of $j=-1$, we have
\begin{align*}
	 &|P_{-1}e^{-\frac{t-s}{2}}  L(t-s,\xi) \chi_{\leq 1}|
	 \\
	 &=
	 \begin{cases}
	\left|P_{-1} e^{-\frac{t-s}{2}} \frac{\sinh((t-s) \sqrt{1/4-|\xi|^2})}{\sqrt{1/4-|\xi|^2}} \chi_{\leq 1}\right|
	 \cleq e^{-\frac{t-s}{20}}\in L^{\tilde{q}}_{s}(0,t) & \text{ if } 1/4<|\xi|<1/2,
	 \\
	 \quad
	 \\
	  \left|P_{-1} e^{-\frac{t-s}{2}} \frac{\sin((t-s) \sqrt{|\xi|^2-1/4})}{\sqrt{|\xi|^2-1/4}} \chi_{\leq 1}\right| 
	 \cleq  e^{-\frac{t-s}{4}} \in L^{\tilde{q}}_{s}(0,t) & \text{ if } |\xi| >1/2.
	 \end{cases}
\end{align*}
 Thus, we get
\begin{align*}
	\norm{P_{-1} I(F)}_{L^2(\R^d)} \cleq  \norm{ P_{-1}  \chi_{\leq 1} F}_{L^{\tilde{q}'} L^2(\R^d)}
\end{align*}

We combine the above estimates. Since $L^2 \ceq \dot{B}_{2,2}^{0}$, it holds that
\begin{align*}
	\norm{I(F)}_{L^2(\R^d)}
	&\ceq \left( \sum_{j \in \Z} \norm{P_j I(F)}_{L^2(\R^d)}^2 \right)^{1/2}
	\\
	&\cleq \left( \sum_{j \in \Z} (2^{-2j/\tilde{q} } \norm{ P_j  \chi_{\leq 1} F}_{L^{\tilde{q}'} L^2(\R^d)} )^2 \right)^{1/2}
	\\
	&\ceq \norm{ \norm{  2^{-2j/\tilde{q} } \norm{P_j  \chi_{\leq 1} F}_{L^2(\R^d)} }_{L^{\tilde{q}'}}  }_{l_j^2}.
\end{align*}
Since $\tilde{q}' \leq 2$, by the Minkowskii integral inequality, we get
\begin{align*}
	\norm{ \norm{  2^{-2j/\tilde{q} } \norm{P_j  \chi_{\leq 1}F}_{L^2(\R^d)} }_{L^{\tilde{q}'}}  }_{l_j^2}
	&\cleq \norm{ \norm{  2^{-2j/\tilde{q} } \norm{P_j  \chi_{\leq 1}F}_{L^2(\R^d)} }_{l_j^2}  }_{L^{\tilde{q}'}}
	\\
	&\ceq \norm{ \norm{ \chi_{\leq 1} F }_{\dot{B}_{2,2}^{-2/\tilde{q}}}  }_{L_{t}^{\tilde{q}'}}
\end{align*}
The Sobolev inequality $\dot{B}_{2,2}^{-2/\tilde{q}} \supset \dot{B}_{\tilde{r},2}^{0}$,  where $\frac{d}{2}\l( \frac{1}{2} - \frac{1}{\tilde{r}}\r)=\frac{1}{\tilde{q}}$, and $\dot{B}_{\tilde{r}',2}^{0} \supset L^{\tilde{r}'}$ (since $\tilde{r}' \leq 2$) imply that 
\begin{align*}
	 \norm{ \norm{ \chi_{\leq 1}F }_{\dot{B}_{2,2}^{-2/\tilde{q}}}  }_{L_{t}^{\tilde{q}'}}
	\cleq  \norm{ \chi_{\leq 1}F }_{L_{t}^{\tilde{q}'}L_{x}^{\tilde{r}'}}. 
\end{align*}
\end{proof}


\begin{lemma}[$L^qL^r$-$L^1 L^2$ estimate]
Let $2 \leq r <\infty$ and  $1\leq q \leq  \infty$. 
Assume that they satisfy 
\begin{align*}
	\frac{d}{2}\l( \frac{1}{2} - \frac{1}{r}\r)
	=\frac{1}{q},
\end{align*}
Then it holds that 
\begin{align*}
	\norm{  \int_{0}^{t} \mathcal{D}_1 (t-s) \chi_{\leq 1} F(s) ds}_{L^{q}(I:L^r(\R^d))}
	&\cleq \norm{F}_{L^{1}(I:L^{2}(\R^d))},
	\\
	\norm{\int_{0}^{t} \partial_t \mathcal{D}_1 (t-s) \chi_{\leq 1} F(s) ds}_{L^{q}(I:L^r(\R^d))}
	&\cleq \norm{F}_{L^{1}(I:L^{2}(\R^d))},
\end{align*}
where $I=[0,T)$ 
and the implicit constant is independent of $T$. 
\end{lemma}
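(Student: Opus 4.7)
The plan is to derive this estimate as the time-reversed dual of the preceding lemma. Write $TF(t) := \int_0^t \mathcal{D}_1(t-s)\chi_{\leq 1} F(s)\,ds$, and recall that the previous lemma, taken with $\sigma=0$, gives $T : L^{\tilde q'}L^{\tilde r'}(I) \to L^\infty L^2(I)$ for every $(\tilde q,\tilde r)$ in the same admissible range, with constant independent of $I=[0,T_0)$. The target bound corresponds to choosing $(\tilde q,\tilde r)=(q,r)$ and passing to the adjoint direction.

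First I would observe that $\mathcal{D}_1(t)$ is self-adjoint on $L^2$: its Fourier symbol $\frac{-e^{t\lambda_1^+}+e^{t\lambda_1^-}}{\lambda_1^--\lambda_1^+}$ is real-valued, because $\lambda_1^\pm$ are either both real ($|\xi|<1/2$) or complex conjugates of each other ($|\xi|>1/2$); the same reality check applies to $\partial_t\mathcal{D}_1(t)$. Consequently, the formal adjoint with respect to the $L^2_{t,x}$ pairing on $[0,T_0)\times\R^d$ is
\[
T^* G(s) = \int_s^{T_0} \chi_{\leq 1}\mathcal{D}_1(t-s) G(t)\,dt.
\]
Testing against $H\in C_c^\infty$ with $\|H\|_{L^{q'}L^{r'}}\leq 1$ and invoking the previous lemma gives
\[
|\langle T^* G, H\rangle| = |\langle G, TH\rangle| \leq \|G\|_{L^1 L^2}\|TH\|_{L^\infty L^2}\lesssim \|G\|_{L^1 L^2},
\]
so by the duality $(L^{q'}L^{r'})^* = L^q L^r$ (the endpoint $(q,r)=(\infty,2)$ handled directly by Minkowski and the uniform $L^2$-boundedness of $\mathcal{D}_1(t)\chi_{\leq 1}$) one concludes $\|T^* G\|_{L^q L^r}\lesssim \|G\|_{L^1 L^2}$.

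Finally, I would introduce the time-reversal isometry $R G(t):=G(T_0-t)$ and verify by the change of variables $u=T_0-t$ that $T^* = RTR$. Since $R$ acts isometrically on every $L^p_tL^s_x(I)$, this yields
\[
\|TF\|_{L^qL^r} = \|T^*(RF)\|_{L^qL^r}\lesssim \|RF\|_{L^1L^2} = \|F\|_{L^1L^2},
\]
which is the first assertion. The second assertion, for $\partial_t\mathcal{D}_1$, follows by an identical argument since its symbol is also real. I expect the main subtlety to be checking the duality step at the $q=\infty$ endpoint and confirming that the implicit constant is independent of $T_0$; both are routine, the latter being directly inherited from the previous lemma.
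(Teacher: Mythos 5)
Your proposal is correct and is exactly the route the paper intends: the paper's proof of this lemma consists of the single sentence ``This follows from the $L^\infty L^2$--$L^qL^r$ estimate and the duality argument,'' and you have supplied precisely the missing details (reality of the symbol of $\mathcal{D}_1(t)$, hence self-adjointness; the identification of the adjoint as the anti-retarded operator $\int_s^{T_0}$; and the time-reversal conjugation $T^*=RTR$ together with the $q=\infty$ endpoint). No gaps.
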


\begin{proof}
This follows from $L^\infty L^2$-$L^qL^r$ estimate and the duality argument. 
\end{proof}

The Besov version can be also proved in the same way as in \cite{InWa19}. 

\subsection{The Strichartz estimates for the high frequency part in the 1-d}
\label{appA.2}

We have the following Strichartz estimates for the high frequency part in the one dimensional case. 

\begin{lemma}[Homogeneous Strichartz estimates for the high frequency part in 1-d]
Let $d=1$.
Let $s\in \mathbb{R}$, $q \in [2,\infty]$, and $r\in [2,\infty]$. Then, we have the following.
\begin{align*}
	\norm{ \cD_{1}(t) \chi_{>1}  f}_{L_t^q \dot{B}^{s}_{r,2}(I)} 
	&\cleq  \norm{ |\nabla|^{\gamma(q,r)-1} \chi_{> 1} f}_{\dot{B}^{s}_{2,2}},
\end{align*}
and
\begin{align*}
	\norm{\partial_t \cD_{1}(t) \chi_{> 1}  f}_{L_t^q \dot{B}^{s}_{r,2}(I)} 
	\cleq  \norm{ |\nabla|^{\gamma(q,r)}  \chi_{>1} f}_{\dot{B}^{s}_{2,2}},
\end{align*}
where $\gamma(q,r)=1/2-1/r$. 
\end{lemma}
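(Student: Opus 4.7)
The plan is to exploit the exponential factor $e^{-t/2}$ in $\mathcal{D}_1(t)=e^{-t/2}\mathcal{F}^{-1}L_1(t,\xi)\mathcal{F}$ together with Bernstein's inequality on each Littlewood--Paley piece. In one spatial dimension this is much cleaner than in higher dimensions because the spatial $L^r$--$L^2$ loss from Bernstein is only $2^{j(1/2-1/r)}$, which already equals $\gamma(q,r)$ when $d=1$; no oscillatory-integral or restriction input is needed, and the $e^{-t/2}$ factor handles all time integrability without any Strichartz admissibility constraint beyond $q,r\ge 2$.

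First, I would observe that since $\chi_{>1}(\xi)$ vanishes on $|\xi|\le 1$, the operator $P_j\chi_{>1}$ is identically zero for $j\le -1$, and for $j\ge 0$ it is supported on a dyadic shell with $|\xi|\sim 2^j\gtrsim 1$. On this shell $\sqrt{4|\xi|^2-1}\sim|\xi|$, so the symbols satisfy the pointwise bounds
\begin{align*}
|L_1(t,\xi)\chi_{>1}(\xi)|\lesssim |\xi|^{-1},\qquad |\partial_t L_1(t,\xi)\chi_{>1}(\xi)|\lesssim 1,
\end{align*}
uniformly in $t\ge 0$ (the first from $|L_1|\le 2/\sqrt{4|\xi|^2-1}$; the second because $\partial_t L_1=\cos(t\sqrt{4|\xi|^2-1}/2)$).

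Second, I would combine Plancherel with the one-dimensional Bernstein inequality $\|P_jg\|_{L^r}\lesssim 2^{j(1/2-1/r)}\|P_jg\|_{L^2}$ (valid for $r\in[2,\infty]$) to obtain, for $j\ge 0$,
\begin{align*}
\|P_j\mathcal{D}_1(t)\chi_{>1}f\|_{L^r_x}&\lesssim e^{-t/2}\,2^{j(\gamma(q,r)-1)}\|P_j\chi_{>1}f\|_{L^2_x},\\
\|P_j\partial_t\mathcal{D}_1(t)\chi_{>1}f\|_{L^r_x}&\lesssim e^{-t/2}\,2^{j\gamma(q,r)}\|P_j\chi_{>1}f\|_{L^2_x},
\end{align*}
where in the second estimate the contribution from $-\tfrac{1}{2}e^{-t/2}L_1$ (differentiation of the prefactor) is of strictly lower order in $|\xi|$ and absorbed. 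Taking $L^q_t$ over $I$ and using $\|e^{-t/2}\|_{L^q_t(I)}\lesssim 1$ uniformly in $T\le\infty$ for every $q\in[2,\infty]$ removes the $t$-dependence and leaves the purely dyadic weights.

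Finally, since $q,r\ge 2$, Minkowski's inequality lets me pull the $\ell^2_j$ in the Besov norm $L^q_t\dot B^s_{r,2}$ out past $L^q_tL^r_x$. Summing the weighted dyadic bounds against $2^{js}\|P_j\chi_{>1}f\|_{L^2}$ and using Plancherel identifies the right-hand side with $\||\nabla|^{\gamma(q,r)-1}\chi_{>1}f\|_{\dot B^s_{2,2}}$ in the first estimate and with $\||\nabla|^{\gamma(q,r)}\chi_{>1}f\|_{\dot B^s_{2,2}}$ in the second. The only point that requires any care is tracking the dyadic exponents and verifying that the lower-order term arising from $\partial_t$ hitting the prefactor is absorbable; everything else is routine, and the proof is substantially shorter than in $d\ge 2$ precisely because in $d=1$ Bernstein alone saturates the exponent $\gamma(q,r)$.
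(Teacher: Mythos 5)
Your proof is correct, and its skeleton is the same as the paper's: both arguments live entirely on dyadic blocks, use the one--dimensional Bernstein inequality to pass from $L^2_x$ to $L^r_x$ at cost $2^{j(1/2-1/r)}=2^{j\gamma(q,r)}$, and let the overall factor $e^{-t/2}$ absorb the $L^q_t$ integration, so that no dispersive input or admissibility condition is needed. The one place you diverge is the $L^2_x$ bound on each block: the paper factors $\mathcal{D}_1$ into the half--wave propagators $e^{-t/2}e^{\pm it\sqrt{-\Delta-1/4}}$, invokes the Mihlin--H\"{o}rmander multiplier theorem to compare with the free wave group $e^{it|\nabla|}$ (paying a harmless $e^{t/4}$), and then uses unitarity; you instead read the bounds $|L_1(t,\xi)|\lesssim |\xi|^{-1}$ and $|\partial_t L_1|\le 1$ directly off the explicit symbol on $\supp \chi_{>1}\subset\{|\xi|\ge 1\}$ and apply Plancherel. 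Your route is more elementary and self-contained (the paper's detour mirrors its higher-dimensional argument, where the wave structure actually matters), and you are also more careful than the paper in recording the Minkowski step needed to interchange $L^q_t$ with $\ell^2_j$ and in tracking the lower-order term from $\partial_t$ hitting $e^{-t/2}$. The only nitpick is the bookkeeping of which blocks vanish ($P_j\chi_{>1}=0$ in fact for all $j\le -1$ with the paper's conventions), which is immaterial.
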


\begin{proof}
It is enough to consider 
\begin{align*}
	e^{-t/2} e^{\pm it\sqrt{-\Delta-1/4}} \chi_{>1}.
\end{align*}
As in \cite{Inu19}, we have
\begin{align*}
	\norm{e^{-t/2} e^{ it\sqrt{-\Delta-1/4}} \chi_{>1} P_j f}_{L^{q}(I:L^r(\R^d))}
	&= \norm{e^{-t/2} \norm{ e^{ it\sqrt{-\Delta-1/4}} \chi_{>1} P_j f }_{L^r}  }_{L^{q}(I)}
	\\
	&\cleq \norm{e^{-t/4} \norm{ e^{ it |\nabla|}  \chi_{>1} P_j  f }_{L^r}  }_{L^{q}(I)}
	\\
	&\cleq \norm{ e^{ it |\nabla|}  \chi_{>1} P_j f  }_{L^{\infty}(I:L^r(\R))},
\end{align*}
by the Mihlin--H\"{o}rmander multiplier theorem and the H\"{o}lder inequality. By the Bernstein inequality and the unitarity of the wave propagator, we obtain
\begin{align*}
	 \norm{ e^{ it |\nabla|} P_j f  }_{L^{\infty}(I:L^r(\R^d))} 
	& \lesssim  2^{j\left(\frac{1}{2}-\frac{1}{r}\right)} \norm{ e^{ it |\nabla|}  \chi_{>1} P_j f  }_{L^{\infty}(I:L^2(\R))}
	\\
	& \lesssim 2^{j\left(\frac{1}{2}-\frac{1}{r}\right)} \norm{ \chi_{>1} P_j f  }_{L^2}.
\end{align*}
Taking the $l^2$-norm for $j$, it holds that
\begin{align*}
	\norm{e^{-t/2} e^{ it\sqrt{-\Delta-1/4}} \chi_{>1} f}_{L^{q}(I:B^{0}_{r,2}(\R))}
	\lesssim \norm{|\nabla|^{\gamma}f}_{L^2}.
\end{align*}
This estimate implies the estimate for $\mathcal{D}_1$ and $\partial_t \mathcal{D}_1$. 
\end{proof}

\begin{lemma}[Inhomogeneous estimate for the high frequency in 1-d]
Let $d=1$. Let $s\in \mathbb{R}$, $q, \tilde{q} \in [2,\infty]$, and $r, \tilde{r}\in [2,\infty]$. 
Then, we have the following.
\begin{align*}
	\norm{ \int_0^t \cD_{1}(t-s) \chi_{> 1 } F(s) ds}_{L_t^q \dot{B}^{s}_{r,2}(I)} 
	\cleq
	 \norm{ |\nabla|^{\gamma(q,r)+\gamma(\tilde{q},\tilde{r})+\delta-1} \chi_{> 1 }F}_{L_t^{\tilde{q}'} \dot{B}^{s}_{\tilde{r}',2}(I)},
\end{align*}
where $\gamma(q,r)=1/2-1/r$. 
\end{lemma}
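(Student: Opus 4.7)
The plan is to reduce the inhomogeneous estimate to a scalar convolution inequality in time, using the explicit form of $\cD_1$, dyadic frequency localization, and Bernstein's inequality. First, I would rewrite
\begin{align*}
\cD_1(t)\chi_{>1}
= e^{-t/2}\,\frac{\sin(t\sqrt{-\Delta-1/4})}{\sqrt{-\Delta-1/4}}\,\chi_{>1}
= \frac{e^{-t/2}}{2i\sqrt{-\Delta-1/4}}\bigl(e^{it\sqrt{-\Delta-1/4}}-e^{-it\sqrt{-\Delta-1/4}}\bigr)\chi_{>1},
\end{align*}
and, exactly as in the homogeneous proof just above, use the Mihlin--H\"ormander multiplier theorem on the region $|\xi|>1$ to replace $\sqrt{-\Delta-1/4}$ by $|\nabla|$ at the cost of upgrading the exponential factor from $e^{-t/2}$ to $e^{-t/4}$. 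It then suffices to bound $\int_0^t e^{-(t-s)/4} e^{\pm i(t-s)|\nabla|} |\nabla|^{-1}\chi_{>1} F(s)\,ds$ in $L^q_t\dot B^s_{r,2}$.

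Next, I would insert a Littlewood--Paley projection $P_j$ and estimate one block at a time. Minkowski's inequality in $s$, Bernstein's inequality on the output side (producing the factor $2^{j(1/2-1/r)}=2^{j\gamma(q,r)}$), the $L^2$-unitarity of $e^{\pm it|\nabla|}$, and Bernstein in the reverse direction on the input side (producing $2^{j(1/2-1/\tilde r)}=2^{j\gamma(\tilde q,\tilde r)}$) together yield
\begin{align*}
\left\| P_j \int_0^t e^{-(t-s)/4} e^{\pm i(t-s)|\nabla|} |\nabla|^{-1}\chi_{>1} F(s)\,ds \right\|_{L^r_x}
\cleq 2^{j(\gamma(q,r)+\gamma(\tilde q,\tilde r)-1)} \int_0^t e^{-(t-s)/4}\,\|P_j\chi_{>1} F(s)\|_{L^{\tilde r'}_x}\,ds,
\end{align*}
after absorbing $|\nabla|^{-1}P_j$ as $2^{-j}$. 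Taking the $L^q_t$-norm and applying Young's convolution inequality with exponents $1+1/q=1/a+1/\tilde q'$, hence $1/a=1/q+1/\tilde q\le 1$ (which is precisely the hypothesis $q,\tilde q\ge 2$), together with $\|e^{-t/4}\|_{L^a_t[0,\infty)}<\infty$, gives
\begin{align*}
\left\| P_j \int_0^t \cdots \,ds \right\|_{L^q_t L^r_x}
\cleq 2^{j(\gamma(q,r)+\gamma(\tilde q,\tilde r)-1)}\,\|P_j\chi_{>1} F\|_{L^{\tilde q'}_t L^{\tilde r'}_x}.
\end{align*}

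Finally, I would weight both sides by $2^{js}$ and sum in $\ell^2_j$; Minkowski's inequality swaps $L^q_t$ with $\ell^2_j$ (resp.\ $L^{\tilde q'}_t$ with $\ell^2_j$) since $q,\tilde q\ge 2$, producing exactly $\| |\nabla|^{\gamma(q,r)+\gamma(\tilde q,\tilde r)-1}\chi_{>1} F\|_{L^{\tilde q'}_t \dot B^s_{\tilde r',2}(I)}$ on the right, which matches the claim (note that $\delta=0$ in one dimension by Table~\ref{tab1}). I expect the only genuinely delicate step to be the Mihlin--H\"ormander replacement of $\sqrt{-\Delta-1/4}$ by $|\nabla|$, which is the same technical point already addressed in the homogeneous proof; all the remaining steps are routine applications of Bernstein, $L^2$-unitarity, and the Young--Minkowski inequalities. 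Crucially, the absence of genuine wave-type dispersion in one dimension is not an obstacle, because the exponential damping $e^{-t/2}$ supplies the time integrability needed for every admissible pair $q,\tilde q\in[2,\infty]$ all by itself.
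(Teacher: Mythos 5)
Your argument is correct, and it is genuinely more self-contained than what the paper does: the paper's entire proof of this lemma is the single sentence ``This follows from the argument in \cite{Inu19} as $d=1$,'' deferring to the general-dimensional machinery of that reference, whereas you give a direct proof that is the natural inhomogeneous extension of the paper's own proof of the homogeneous 1-d lemma immediately above (same Mihlin--H\"ormander reduction of $e^{-t/2}\sin(t\sqrt{-\Delta-1/4})/\sqrt{-\Delta-1/4}$ to $e^{-t/4}e^{\pm it|\nabla|}|\nabla|^{-1}$ on $|\xi|\gtrsim 1$, then Bernstein and $L^2$-unitarity blockwise, with H\"older in time upgraded to Young's convolution inequality via $1/a=1/q+1/\tilde q\le 1$). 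The two points specific to your route check out: $\delta((q,r),(\tilde q,\tilde r))=0$ for $d=1$ in every cell of Table~\ref{tab1} (each nonvacuous entry carries a factor $(d-1)/2$ or a factor $\tilde q/q$, $q/\tilde q$ that vanishes under the row conditions when $d=1$), and the exponential damping alone supplies the $L^a_t$ integrability of the kernel for all $a\in[1,\infty]$, so no dispersive estimate, Christ--Kiselev argument, or loss of derivatives beyond $\gamma(q,r)+\gamma(\tilde q,\tilde r)-1$ is needed. What your approach buys is transparency and independence from \cite{Inu19} (whose high-frequency Strichartz estimates were originally stated for $d\ge 2$, which is exactly why this appendix exists); what the paper's citation buys is brevity and uniformity with the higher-dimensional case. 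The only step worth writing out carefully in a final version is the multiplier bound $e^{-t/2}\|e^{it(\sqrt{\xi^2-1/4}-|\xi|)}|\xi|(\xi^2-1/4)^{-1/2}\chi_{>1}\tilde P_j\|_{M_r}\lesssim e^{-t/4}$ uniformly in $j$, including $r=\infty$; since you apply it to a single Littlewood--Paley block, this reduces to an $L^1$ bound on the inverse Fourier transform of the localized symbol, which grows at most polynomially in $t$ and is absorbed by the damping, exactly as in the paper's homogeneous proof.
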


\begin{proof}
This follows from the argument in \cite{Inu19} as $d=1$.
\end{proof}

By scaling these estimate, we obtain Lemmas \ref{lem2.2.0} and \ref{lem2.4.0} in the case of $d=1$.



\subsection{Completeness of $X|Y$}

We revisit the completeness of $X|Y$ for the reader's convenience. 
Let
\begin{align*}
	X|Y&:=\{ u : \| \chi_{\leq 1}(\nabla) u \|_{X} + \| \chi_{>1}(\nabla) u \|_{Y} <\infty\}
	\\
	\| u \|_{X|Y}&:=\| \chi_{\leq 1}(\nabla) u \|_{X} + \| \chi_{>1}(\nabla) u \|_{Y}
\end{align*}
where $X$ and $Y$ are Banach spaces satisfying $\| \chi_{\leq 1}(\nabla) u \|_{Z} \leq \| u \|_{Z}$ and  $\| \chi_{>1}(\nabla) u \|_{Z}\leq \| u \|_{Z}$ for $Z=X,Y$.

Take a Cauchy sequence $\{u_n\}$ in $X|Y$. 

As $m,n \to \infty$, we get
\begin{align*}
	&\|  \chi_{\leq 1}(\nabla)^2 u_n -  \chi_{\leq 1}(\nabla)^2 u_m \|_{X}
	\leq \|  \chi_{\leq 1}(\nabla) u_n -  \chi_{\leq 1}(\nabla) u_m \|_{X} \to 0
	\\
	&\|  \chi_{\leq 1}(\nabla) \chi_{>1}(\nabla)  u_n -  \chi_{\leq 1}(\nabla)  \chi_{>1}(\nabla)  u_m \|_{X}
	\leq \|  \chi_{\leq 1}(\nabla) u_n -  \chi_{\leq 1}(\nabla) u_m \|_{X} \to 0
\end{align*}
By the completeness of $X$, there exist $U^1$ and $U^2$ such that 
\begin{align}
\label{eq29.1}
	&\chi_{\leq 1}(\nabla)^2 u_n \to U^1 \text{ in } X
	\\
\label{eq29.2}
	&\chi_{\leq 1}(\nabla) \chi_{>1}(\nabla)  u_n \to U^2 \text{ in } X
\end{align}
We also have
\begin{align*}
	\|  \chi_{\leq 1}(\nabla) u_n -(U^1+U^2)\|_{X}
	&=\|  \chi_{\leq 1}(\nabla)( \chi_{\leq 1}(\nabla) u_n +  \chi_{> 1}(\nabla)u_n) -(U^1+U^2)\|_{X}
	\\
	&\leq \|  \chi_{\leq 1}(\nabla)^2 u_n -U^1\|_{X} +\| \chi_{\leq 1}(\nabla)\chi_{> 1}(\nabla)u_n -U^2\|_{X}
	\\
	& \to 0
\end{align*}
Therefore, we have
\begin{align}
\label{eq29.3}
	&\chi_{\leq 1}(\nabla) u_n \to U^1+U^2 \text{ in } X
\end{align}
Now, by \eqref{eq29.1}--\eqref{eq29.3}, we obtain
\begin{align}
\tag{1}
\label{eq1}
	\begin{cases}
	\chi_{\leq 1}(\nabla) (U^1 + U^2) = U^1
	\\
	\chi_{> 1}(\nabla) (U^1 + U^2) = U^2
	\end{cases}
\end{align}
The similar argument works in $Y$ for the high frequency part.
There exist $V^1,V^2$ such that 
\begin{align}
\label{eq29.4}
	&\chi_{> 1}(\nabla)^2 u_n \to V^1 \text{ in } Y
	\\
\label{eq29.5}
	&\chi_{\leq 1}(\nabla) \chi_{>1}(\nabla)  u_n \to V^2 \text{ in } Y
\end{align}
and
\begin{align}
\label{eq29.6}
	&\chi_{> 1}(\nabla) u_n \to V^1+V^2 \text{ in } Y
\end{align}
ans thus
\begin{align}
\tag{2}
\label{eq2}
	\begin{cases}
	\chi_{> 1}(\nabla) (V^1 + V^2) = V^1
	\\
	\chi_{\leq 1}(\nabla) (V^1 + V^2) = V^2
	\end{cases}
\end{align}

Noting \eqref{eq29.2} and \eqref{eq29.5}, $\chi_{\leq 1}(\nabla) \chi_{>1}(\nabla)  u_n$ converges in both $X$ and $Y$, and its limits in $X$ and $Y$ are $U^2$ and $V^2$, respectively. 

We get
\begin{align}
\tag{3}
\label{eq3}
	U^2=V^2.
\end{align}

\begin{proof}[Proof of {\eqref{eq3}}]
In general, let $f_n \to F$ in $X$ and $f_n \to G$ in $Y$. Then 
\begin{align*}
	\| F-G \|_{X+Y} 
	&=\| F-f_n +f_n-G \|_{X+Y}
	\\
	&\leq \| F-f_n \|_{X+Y} + \| f_n-G \|_{X+Y}
	\\
	&\leq \| F-f_n \|_{X} + \| f_n-G \|_{Y}
	\\
	& \to 0
\end{align*}
by the assumption. Therefore, $F=G$. 
\end{proof}

Set $u=U^1+U^2+V^1+V^2$. Then, we will show
\begin{align*}
	\| u_n - u \|_{X|Y} \to 0.
\end{align*}
Now, it follows from \eqref{eq1}, \eqref{eq2}, and \eqref{eq3} that
\begin{align*}
	\| \chi_{\leq 1}(\nabla) u_n - \chi_{\leq 1}(\nabla) u \|_{X}
	&=\| \chi_{\leq 1}(\nabla) u_n - \chi_{\leq 1}(\nabla) (U^1+U^2+V^1+V^2) \|_{X}
	\\
	&=\| \chi_{\leq 1}(\nabla) u_n - (U^1+V^2)\|_{X}
	\\
	&=\| \chi_{\leq 1}(\nabla) u_n - (U^1+U^2)\|_{X}
\end{align*}
By \eqref{eq29.3}, the last term goes to $0$ as $n \to \infty$. 
The same argument works in $Y$ for the high frequency term.
Thus, we get $\| u_n - u \|_{X|Y} \to 0.$

\acknowledgement

The first author is supported by JSPS KAKENHI Grant-in-Aid for Early-Career Scientists JP18K13444.



%
%
%
%

\end{document}